\numberwithin{equation}{section}
\newtheorem{theorem}{Theorem}[section] 
\newtheorem{proposition}{Proposition}[section] 
\newtheorem{corollary}{Corollary}[section] 
\newtheorem{lemma}{Lemma}[section] 
\newtheorem{remark}{Remark}[section] 
\newcommand{\R}{\mathbb R} 
\newcommand{\C}{\mathbb C} 
\newcommand{\E}{\mathbb{E}}
\renewcommand{\P}{\mathbb{P}}
\newcommand{\var}{\mathbb{V}}
\newcommand{\V}{\mathbb{V}}
\newcommand{\Tr}{\mathrm{Tr}}
\newcommand{\tr}{\mathrm{tr}}
\newcommand{\diag}{\text{diag}}
\newcommand{\mb}{\mathbf}
\renewcommand\Re{\operatorname{Re}}
\renewcommand\Im{\operatorname{Im}}
\newcommand{\indicator}[1]{\ensuremath{\mathbf{1}_{\{#1\}}}}
\renewcommand{\P}{\mathbb P}
\begin{document}
\title[Finite Rank Deformations]{On Finite Rank Deformations of Wigner Matrices II: Delocalized Perturbations}
\author[D. Renfrew]{David Renfrew} \thanks{D.R. has been supported in part by the NSF grants VIGRE DMS-0636297, DMS-1007558, and 
DMS-0905988 }
\address{Department of Mathematics, University of California, Davis, One Shields Avenue, Davis, CA 95616-8633  }
\email{drenfrew@math.ucdavis.edu}
\author[A. Soshnikov]{Alexander Soshnikov}
\address{Department of Mathematics, University of California, Davis, One Shields Avenue, Davis, CA 95616-8633  }
\thanks{A.S. has been supported in part by the NSF grant DMS-1007558}
\email{soshniko@math.ucdavis.edu}
\begin{abstract}
We study the distribution of the outliers in the spectrum of finite rank deformations of Wigner random matrices.
We assume that the matrix entries have finite fourth moment and extend the results by Capitaine, Donati-Martin, and F\'eral for perturbations whose eigenvectors are delocalized.
\end{abstract}

\maketitle

\section{Introduction}
In this paper we continue the study of the eigenvalues of finite rank deformations to Wigner random matrices, extending the results of \cite{PRS}
to a larger class of perturbations. 

Let $\mb X_N:= \frac{1}{\sqrt{N}} \mb W_N $ be a random Wigner real symmetric (Hermitian) matrix.
In the real symmetric case, we assume that the entries 
$$(\mb W_N)_{jk},\ 1\leq j\leq k \leq N,$$ 
are independent random variables
such that the off-diagonal entries satisfy
\begin{equation}
\label{offdiagreal}
\E [(\mb W_N)_{jk}]=0, \ \V[(\mb W_N)_{jk}]=\sigma^2, \ 1\leq j<k \leq N, \ m_4:=\sup_{j\not=k,N} \E[ (\mb W_N)_{jk}^4]<\infty,
\end{equation}
and the Lindeberg type condition for the fourth moments takes place,
\begin{equation}
\label{lind1}
L_N(\epsilon) \to 0, \ \text{as} \ N\to \infty, \ \forall \epsilon>0, 
\end{equation}
where 
\begin{equation}
\label{lind2}
L_N(\epsilon)= \frac{1}{N^2}\* \sum_{1\leq i<j\leq N} \E\left( |(\mb W_N)_{ij}|^4 \*\indicator{|(\mb W_N)_{ij}|\geq \epsilon\*N^{1/4}}\right ).
\end{equation}
Here and throughout the paper, $\E \xi $ denotes the mathematical expectation and $\V \xi $ the variance of a random variable $\xi.$ 
In addition, we assume that the diagonal entries  
satisfy
\begin{align}
\label{diagreal}
& \E [(\mb W_N)_{ii}]=0, \ \ 1\leq i \leq N, \ \sigma^2_1:=\sup_{i,N} \E [(\mb W_N)_{ii}^2] < \infty,\\
\label{diagreal1}
& l_N(\epsilon)\to 0, \ \text{as} \ N\to \infty, \ \forall \epsilon>0, \ \text{where} \\
\label{diagreal2}
& l_N(\epsilon)= \frac{1}{N}\* \sum_{1\leq i \leq N} \E\left( |(\mb W_N)_{ii}|^2 \*\indicator{|(\mb W_N)_{ii}|\geq \epsilon\*\sqrt{N}}\right ).
\end{align}

We note that (\ref{lind1}) and (\ref{diagreal1})  are satisfied if there exist an $\epsilon > 0$ such that
\begin{equation}
\label{foureps}
\sup_{i\not=j,N} \E [(\mb W_N)_{ij}^{4+\epsilon}]<\infty, \ \sup_{i,N} \E [(\mb W_N)_{ii}^{2+\epsilon}]<\infty.
\end{equation}
If the off-diagonal elements $(\mb W_N)_{jk}$ are identically distributed Gaussian and the diagonal elements $(\mb W_N)_{ii}$ are also identically 
distributed Gaussian with twice the variance of the off-diagonal elements then $\mb W_N$ is said to belong to the Gaussian Orthogonal Ensemble (GOE).

In the Hermitian case, we assume that the entries  
$$\Re (\mb W_N)_{jk}, \ \Im (\mb W_N)_{jk}, \ 1\leq j<k \leq N, \ (\mb W_N)_{ii}, \ 1\leq i \leq N,$$
are independent random variables such that
the off-diagonal entries satisfy
\begin{gather}
\label{offdiagherm1}
 \E \Re (\mb W_N)_{jk}= \E \Im (\mb W_N)_{jk} = 0, \ \ 1\leq j<k \leq N, \\
\label{offdiagherm2}
 \V  \Re (\mb W_N)_{jk} = \V \Im (\mb W_N)_{jk}= \frac{\sigma^2}{2}, \ 1\leq j<k \leq N, \ m_4:=\sup_{j\not=k, N} \E |(\mb W_N)_{jk}|^4<\infty,
\end{gather}
and the Lindeberg type condition (\ref{lind1}) for the fourth moments of the off-diagonal entries takes place.
In addition, we assume that the diagonal entries  satisfy
\begin{equation}
\label{diagherm}
\E (\mb W_N)_{ii}=0, \ \ 1\leq i \leq N, \ \sigma^2_1:=\sup_{i,N} \E |(\mb W_N)_{ii}|^2<\infty,
\end{equation}
and the Lindeberg type condition (\ref{diagreal1}) for the second moments of the diagonal entries takes place.

If the real and imaginary parts of the off-diagonal elements $(\mb W_N)_{jk}$ are independent 
identically distributed Gaussian random variables and the diagonal elements $(\mb W_N)_{ii}$ 
are also identically distributed Gaussian random variables
with twice the variance of the real part of the off-diagonal entries then $\mb W_N$ is said to belong to the Gaussian 
Unitary Ensemble (GUE).

We refer the reader to \cite{AGZ}, \cite{B}, \cite{BG}, and \cite{M} for basic results about standard real symmetric and Hermitian Wigner matrices.
In particular, the Wigner semicircle law states that the empirical distribution of the eigenvalues of $\mb X_N= \frac{1}{\sqrt{N}} \mb W_N $ converges
as $N \to \infty$ to the nonrandom limiting probability distribution $\mu_{sc},$ known as the semicircle distribution, 
whose density with respect to the Lebesgue measure is given by
\begin{equation}
\label{polukrug}
\frac{d \mu_{sc}}{dx}(x) := \frac{1}{2 \pi \sigma^2} \sqrt{ 4 \sigma^2 - x^2} \mathbf{1}_{[-2 \sigma , 2 \sigma]}(x).
\end{equation}
The Stieltjes transform of the semicircle distribution 
\begin{equation}
\label{steltsem} 
g_\sigma(z) := \int \frac{d \mu_{sc}(x)}{z-x}= \frac{z-\sqrt{z^2-4\*\sigma^2}}{2\*\sigma^2}, \ z \in \C \backslash [-2\*\sigma, 2\*\sigma].
\end{equation}
is the solution to
\begin{equation}
\sigma^2 g_\sigma^2(z) - z g_\sigma(z) + 1 =0 
\end{equation}
that decays to $0$ as $z \to \infty$.

We consider the spectrum of $\frac{1}{\sqrt{N}}\* \mb W_N + \mb A_N= \mb X_N + \mb A_N$ where 
$\mb A_N$ is a deterministic real symmetric (Hermitian) matrix of fixed finite rank 
$r$. 
Spectral properties of finite rank perturbations of Wigner matrices have been studied extensively since the pioneering paper \cite{FK} by F\"uredi and  
Koml\'os who considered $(\mb A_N)_{ij}=\frac{c}{\sqrt{N}}, \ 1\leq i, j \leq N,$ which corresponds to the case of a Wigner matrix with non-centered entries having mathematical expectation $c,$  where $c$ is a fixed non-zero real number. It was shown that the largest eigenvalue of $\mb X_N + \mb A_N$
is asymptotically normal with mathematical expectation $c\* \sqrt{N} + \frac{\sigma^2}{c\*\sqrt{N}}$ and variance $\frac{2\*\sigma^2}{N}$ 
in the real symmetric case and $\frac{\sigma^2}{N}$ in the Hermitian case.

The more mathematically challenging case when $c=\frac{\theta}{\sqrt{N}}$ was studied in \cite{P} (in the GUE case) and \cite{FP} 
(for arbitrary Hermitian Wigner matrices with symmetrical sub-Gaussian marginal distribution). 
In particular, it was shown that there is a phase transition at 
$\theta=\sigma.$ For $\theta>\sigma$ the spectrum of $\mb X_N + \mb A_N$ has one outlier which is asymptotically normal with mathematical expectation
$\rho=\theta + \frac{\sigma^2}{\theta}$ and variance $\frac{\sigma^2(\theta^2-\sigma^2)}{\theta^2\*N}.$ 
For $0<\theta<\sigma,$ the largest eigenvalue of $\mb X_N + \mb A_N$ fluctuates on the scale $N^{-2/3}$ around $2\*\sigma,$ and has Tracy-Widom 
distribution in the limit.  Large deviations for the outlier
in the Gaussian case were studied by Maida in \cite{Mai}.

The case of arbitrary finite rank perturbations has been considered by P\'ech\'e (\cite{P}) for a GUE matrix and by several other authors
for real symmetric and Hermitian Wigner random matrices
(see e.g. \cite{CDF1}, \cite{CDF},   \cite{CDFF}, \cite{BR}, \cite{BGM}, \cite{BGM1}, \cite{PRS}, \cite{KYlsc} and references therein).
We also refer the reader to \cite{BW1}, \cite{BW2}, \cite{BV1}, and \cite{BV2} for related results for unitary and orthogonal ensembles of 
random matrices. Finally, we note several results about the outliers in the spectrum of spiked sample covariance random matrices 
(\cite{Jo}, \cite{BBP}, \cite{Paul}, \cite{BS}) and non-Hermitian random matrices (\cite{T}).

Let us denote the ordered eigenvalues of $\mb A_N$ by $\theta_1 > \ldots > \theta_J.$  The multiplicity of $\theta_j$ is fixed and denoted by
$k_j, \ 1\leq j\leq J$.
We assume that both the eigenvalues of $\mb A_N$ and their multiplicities are independent of $N.$ 
Let $j_0$ be such that $\theta_{j_0} = 0$. Thus, $\mb A_N$ has $j_0-1$ distinct positive eigenvalues (not counting multiplicities).
The ordered eigenvalues of $\mb X_N + \mb A_N$ are denoted 
$\lambda_1 \geq \ldots \geq \lambda_N $. 
Our first theorem is:

\begin{theorem}[\cite{CDF1}, \cite{PRS}]
\label{firsttheorem}
Let $\mb X_N=\frac{1}{\sqrt{N}} \mb W_N$ be a random real symmetric (Hermitian) Wigner matrix
defined in (\ref{offdiagreal}-\ref{diagreal2}) (respectively (\ref{offdiagherm1}-\ref{diagherm})). Let $\mb A_N$ be a deterministic 
real symmetric (Hermitian) matrix of fixed finite rank $r$ as above.
Let $J_{+\sigma}$ (resp. $J_{-\sigma}$) be the number of $j$'s such that $\theta_j > \sigma$ (resp. $\theta_j < -\sigma$) and let 
\begin{equation}
\label{minsk}
\rho_{\theta_j} = \rho_j := \theta_j + \frac{\sigma^2}{\theta_j}.
\end{equation}
Then the following holds:
\begin{enumerate}[(a)]
\item For $1 \leq j \leq J_{+\sigma}, 1 \leq i \leq k_j, \lambda_{k_1 + \ldots + k_{j-1}+i} \to \rho_{j},$ 
\item$ \lambda_{k_1 + \ldots + k_{J_{+\sigma}}+1} \to 2 \sigma,$ 
\item $\lambda_{k_1 + \ldots + k_{J-J_{-\sigma}}} \to - 2 \sigma,$
\item For $ j \geq J- J_{-\sigma}+1, 1 \leq i \leq k_j, \lambda_{k_1 + \ldots + k_{j-1}+i} \to \rho_{j}.$
\end{enumerate}
The convergence in (a)-(d) is in probability.
\end{theorem}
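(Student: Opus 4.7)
The plan is to identify the outlier eigenvalues of $\mb X_N + \mb A_N$ via an exact determinantal equation and then analyze this equation asymptotically using an isotropic semicircle law for $\mb X_N$. Decompose the perturbation spectrally as $\mb A_N = \mb U_N \mb \Theta_N \mb U_N^*$, with $\mb U_N$ an $N \times r$ matrix of orthonormal eigenvectors of $\mb A_N$ corresponding to its nonzero eigenvalues and $\mb \Theta_N$ the $r \times r$ diagonal of those eigenvalues. A Schur complement identity then yields, for $\lambda \notin \mathrm{spec}(\mb X_N)$,
\begin{equation*}
\lambda \in \mathrm{spec}(\mb X_N + \mb A_N) \iff \det\bigl(\mb I_r - \mb \Theta_N \mb U_N^* G_N(\lambda) \mb U_N\bigr) = 0,
\end{equation*}
where $G_N(z) := (z \mb I_N - \mb X_N)^{-1}$.

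The heart of the argument is to show that for any compact $K \subset \C \setminus [-2\sigma, 2\sigma]$,
\begin{equation*}
\sup_{z \in K} \bigl\| \mb U_N^* G_N(z) \mb U_N - g_\sigma(z) \mb I_r \bigr\| \longrightarrow 0 \text{ in probability},
\end{equation*}
an isotropic semicircle law evaluated on the specific unit vectors provided by the columns of $\mb U_N$. Because these columns are merely $\ell^2$-normalized and delocalized, the diagonal-only convergence available for localized perturbations no longer suffices; one instead expands each bilinear form $\mb u_i^* G_N(z) \mb u_j$ via the resolvent identity and controls the resulting quadratic forms in the rows of $\mb W_N$ using Chebyshev-type inequalities. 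To work under the Lindeberg-type hypotheses (\ref{lind1})-(\ref{diagreal2}) rather than a stronger tail assumption, one first truncates the entries of $\mb W_N$ at the scale $\epsilon N^{1/4}$, recenters, and controls the truncation error via the uniform fourth-moment integrability provided by (\ref{lind1}).

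Once the isotropic law is established, the determinantal equation reduces in the limit to $\prod_{j=1}^{J}(1 - \theta_j g_\sigma(\lambda))^{k_j} = 0$, whose solutions satisfy $g_\sigma(\lambda) = 1/\theta_j$. The quadratic (\ref{steltsem}) yields $\lambda = \theta_j + \sigma^2/\theta_j = \rho_j$, and these values lie outside $[-2\sigma, 2\sigma]$ precisely when $|\theta_j| > \sigma$. A Hurwitz/Rouch\'e-type argument applied to the random holomorphic function $\lambda \mapsto \det(\mb I_r - \mb \Theta_N \mb U_N^* G_N(\lambda) \mb U_N)$ on small contours around each such $\rho_j$ then produces exactly $k_j$ eigenvalues converging to $\rho_j$, giving parts (a) and (d). For (b) and (c), one combines the determinantal equation, which rules out outliers away from the set $\{\rho_j : |\theta_j| > \sigma\}$, with the Wigner semicircle law and standard edge estimates for $\mb X_N$ to conclude that the largest remaining eigenvalue converges to $2\sigma$, and symmetrically at $-2\sigma$.

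The main obstacle is establishing the uniform isotropic semicircle law under the weak moment assumptions of this paper. Classical proofs of such laws rely on sub-Gaussian concentration for quadratic forms $\mb u^* G(z) \mb u$, which is unavailable with only finite fourth moments. The delocalization property $\|\mb u_i\|_\infty \to 0$ of the eigenvectors of $\mb A_N$ replaces the missing concentration: each coordinate contribution to the relevant quadratic form is small, so Chebyshev-level bounds applied after truncation suffice to control the fluctuations uniformly in $z$ on the required contours.
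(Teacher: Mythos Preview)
The paper does not prove this theorem itself; it is quoted from \cite{CDF1} and \cite{PRS}, with the remark that the appendix's truncation argument weakens the moment hypotheses of \cite{PRS} to the present Lindeberg-type conditions. Your architecture---the Sylvester determinantal identity, an isotropic law $\mb U_N^* \mb R_N(z)\mb U_N \to g_\sigma(z)\mb I_r$ (your $G_N$ is the paper's $\mb R_N$), and a Hurwitz--Rouch\'e localization of zeros---is the standard route and is essentially what those references do.

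There is, however, a real gap. Theorem~\ref{firsttheorem} makes \emph{no assumption} on the eigenvectors of $\mb A_N$: it must cover the localized Case~A (eigenvectors supported on finitely many coordinates) just as well as Case~B. You repeatedly invoke delocalization $\|\mb u_i\|_\infty \to 0$ as the device that ``replaces the missing concentration'' in the isotropic law, but for a localized $\mb A_N$ this hypothesis is simply false, so your argument as written does not establish the theorem as stated---you have imported the Case~B hypothesis of Theorem~\ref{thm:caseB} into a result that is unconditional in the eigenvectors. The repair is that delocalization is unnecessary at the law-of-large-numbers scale: the variance bound $\V(\langle \mb u, \mb R_N(z)\mb v\rangle) = O(P_8(|\Im z|^{-1})/N)$ of (\ref{holland}) (Proposition~2.1 in \cite{PRS}) and the expectation estimates of Proposition~\ref{proposition:prop1} and Theorem~\ref{centering} hold for \emph{arbitrary} unit vectors $\mb u,\mb v$, with no $\ell^\infty$ control required; together they yield $\langle \mb u,\mb R_N(z)\mb v\rangle \to g_\sigma(z)\langle \mb u,\mb v\rangle$ in probability uniformly on compacta away from $[-2\sigma,2\sigma]$, which is exactly the isotropic input your determinantal argument needs. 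Delocalization enters this paper only at the $\sqrt{N}$ fluctuation scale (Theorems~\ref{fluctuations} and \ref{thm:caseB}), not for the first-order limits of Theorem~\ref{firsttheorem}.
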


In \cite{CDF1}, Capitaine, Donati-Martin, and F\'eral consider Wigner matrices with i.i.d. entries 
whose marginal distribution of the matrix entries of 
$\mb W_N$ is symmetric and satisfies a 
Poincar\'e inequality \eqref{poin}. Under these conditions they show that the convergence in Theorem \ref{firsttheorem} takes place almost surely.
We recall that a probability measure  
$\P$ on $\R^M$ satisfies a Poincar\'e inequality with constant $\upsilon>0$ if, for all continuously differentiable 
functions 
$f: \R^M \to \C,$
\begin{equation}
\label{poin}
\V_{\P}(f)= \E_{\P} \left(|f(x)-\E_{\P}(f(x))|^2\right) \leq \frac{1}{\upsilon}\* \E_{\P}[ |\nabla f(x)|^2 ].
\end{equation} 
It is known that a probability distribution satisfying the Poincar\'e inequality (\ref{poin}) has a subexponential tail (see e.g. \cite{AGZ}).  

In \cite{PRS},  we consider Wigner matrices with 
five finite moments for the off diagonal entries and three finite moments for the diagonal entries. Using the standard truncation argument 
(see the appendix of this paper) these 
conditions can be weakened to those in (\ref{offdiagreal}-\ref{diagreal2}) (respectively (\ref{offdiagherm1}-\ref{diagherm})). 
In this paper we will cite theorems from \cite{PRS} assuming the conditions (\ref{offdiagreal}-\ref{diagreal2}) 
(respectively (\ref{offdiagherm1}-\ref{diagherm})) without further comment. 

The fluctuations of the outlying eigenvalues around $\rho_{\theta_j}$ have been studied in \cite{CDF}, \cite{PRS}, and \cite{KYlsc}.
These fluctuations are dependent on the form of the perturbation.  In particular,
additional assumptions on the eigenvectors of $\mb A_N$ are required for a distributional limit to exist and the limiting distribution of the (properly rescaled) outliers depends on the localized/delocalized nature of the eigenvectors of
$\mb A_N$ corresponding to $\theta_j.$

In \cite{CDF}, the authors consider two regimes:

{\bf Case A} (``The eigenvectors don't spread out'')

The orthonormal eigenvectors of $\mb A_N$ corresponding to $\theta_j$ are spanned by a finite number 
$K_j$ of canonical basis vectors of $\C^N$ (without loss 
of generality we can assume those canonical vectors to be $e_1, \ldots, e_{K_j}$), and the 
(non-zero) coordinates of these eigenvectors are independent of $N$ for all sufficiently large $N.$

{\bf Case B} (``The eigenvectors are delocalized'')

The $l^{\infty}$ norm of every orthonormal eigenvector of $\mb A_N$ corresponding to $\theta_j$ goes to zero as $N \to \infty. $

We denote by $k_{+\sigma} := k_1 +\ldots +k_{J_{+\sigma}}$ the number of positive eigenvalues of $\mb A_N$ bigger than $\sigma$ (counting with 
multiplicities) and
by $k\geq k_{+ \sigma} $ the minimal number of canonical basis vectors $e_1, \ldots, e_N$ of $\C^N$ required to span all the eigenvectors 
corresponding to the eigenvalues $\theta_1, \ldots, \theta_{J_{+ \sigma}}. $

Let us denote
\begin{equation}
\label{ctheta}
c_{\theta_j}:= \frac{\theta_j^2}{\theta_j^2-\sigma^2}.
\end{equation}

The following theorem concerning fluctuations in case A was proved for symmetric marginal distribution satisfying the 
Poincar\'e inequality in \cite{CDF}.   It was extended to the assumptions (\ref{offdiagreal}-\ref{diagreal2}) 
(respectively (\ref{offdiagherm1}-\ref{diagherm})) in \cite{PRS}. 
\begin{theorem} [Theorem 1.3 in \cite{PRS}]
\label{thm:caseA}
Let $\mb X_N=\frac{1}{\sqrt{N}} \mb W_N$ be a random real symmetric (Hermitian) Wigner matrix
defined in (\ref{offdiagreal}-\ref{diagreal2}) (respectively (\ref{offdiagherm1}-\ref{diagherm})).
In Case A, the $k_j$-dimensional vector 
\[ \Big( c_{\theta_j}\* \sqrt{N}\*(\lambda_{k_1+\ldots +k_{j-1}+i} -\rho_j), \ i=1, \ldots, k_j \Big) \]
converges in distribution to the distribution of the ordered eigenvalues of the $k_j \times k_j$ random matrix $V_j$ defined as
\begin{equation}
\label{vj}
\mb V_j:=\mb U^*_j \* (\mb W_j + \mb H_j) \* \mb U_j,
\end{equation}
where $\mb W_j$ is a Wigner random matrix of size $K_j$ with the same marginal distribution of the matrix entries as $\mb W_N,$ 
$\mb H_j$ is a centered Hermitian Gaussian matrix of size $K_j,$ independent of $\mb W_j,$ with independent entries 
$H_{st}, \ 1 \leq s \leq t \leq K_j,$  with
the variance of the entries given by
\begin{align}
& \E(H^2_{ss})=\left(\frac{m_4-(4-\beta)\*\sigma^2}{\theta_j^2}\right) + 
\frac{2}{\beta}\* \frac{\sigma^4}{\theta_j^2-\sigma^2}, \ s=1, \ldots, K_j, \\
& \E(|H_{st}|^2)= \frac{\sigma^4}{\theta_j^2-\sigma^2}, \ 1\leq s<t\leq K_j,
\end{align}
and $\mb U_j$ is a $K_j\times k_j$ such that the ($K_j$-dimensional) columns of $\mb U_j$ are written from the first $K_j$ coordinates of the orthonormal 
eigenvectors
corresponding to $\theta_j.$
\end{theorem}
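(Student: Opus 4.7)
The plan is to reduce the outlier eigenvalue problem to a finite-dimensional equation via the Schur complement, then extract the limiting distribution from a joint central limit theorem for entries of the resolvent of $\mb X_N$ localized to the first $K_j$ coordinates.

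First I would write the eigenvalue equation for an outlier $\lambda$ near $\rho_j$. Since $\mb A_N$ has rank $r$ and, in Case A, all eigenvectors associated to $\theta_j$ are supported on the first $K_j$ basis vectors, the identity $\det(\mb X_N+\mb A_N-\lambda)=\det(\mb X_N-\lambda)\det(I+\mb A_N(\mb X_N-\lambda)^{-1})$ reduces the problem to finding $\lambda$ for which
\[
\det\!\Big(I_{k_j}+\theta_j\,\mb U_j^*\,\bigl[(\mb X_N-\lambda)^{-1}\bigr]_{K_j\times K_j}\,\mb U_j\Big)=0,
\]
modulo a finite number of analogous equations for the other $\theta_i$'s which decouple to leading order.

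Next I would analyze the $K_j\times K_j$ top-left block $G_{K_j}(z):=\bigl[(\mb X_N-z)^{-1}\bigr]_{K_j\times K_j}$ by the Schur complement applied to the partition of $\mb X_N$ into the $K_j\times K_j$ block $\mb X^{(1)}$, the off-diagonal rectangular block $\mb X^{(2)}$, and the $(N-K_j)\times(N-K_j)$ block $\mb X^{(3)}$:
\[
G_{K_j}(z)=-\bigl[zI_{K_j}-\mb X^{(1)}+\mb X^{(2)}(\mb X^{(3)}-z)^{-1}(\mb X^{(2)})^{*}\bigr]^{-1}.
\]
The concentration of the quadratic form $\mb X^{(2)}(\mb X^{(3)}-z)^{-1}(\mb X^{(2)})^{*}$ around its mean $\sigma^2 g_\sigma(z)I_{K_j}$ yields $G_{K_j}(\rho_j)\approx -\theta_j^{-1}I_{K_j}$, which recovers Theorem \ref{firsttheorem}. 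To capture the fluctuations I would set $\lambda=\rho_j+x/(c_{\theta_j}\sqrt N)$ and expand the scalar prefactor using $\theta_j\,\partial_z(\sigma^2 g_\sigma(z)-z)\bigr|_{z=\rho_j}=-c_{\theta_j}^{-1}$, which explains the normalization. What remains is to identify the joint distributional limit of
\[
\sqrt N\,\Bigl(G_{K_j}(\rho_j)+\theta_j^{-1}I_{K_j}\Bigr)
=\sqrt N\,\theta_j^{-2}\Bigl(\mb X^{(1)}+\bigl[\sigma^2 g_\sigma(\rho_j)I_{K_j}-\mb X^{(2)}(\mb X^{(3)}-\rho_j)^{-1}(\mb X^{(2)})^{*}\bigr]\Bigr)+o_\P(1).
\]
The first summand, when rescaled, returns precisely $\theta_j^{-2}\mb W_j$. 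The second summand I would treat by a martingale CLT for quadratic forms in the independent rows of $\mb X^{(2)}$, conditional on $\mb X^{(3)}$, using rigidity-type bounds on the entries of $(\mb X^{(3)}-\rho_j)^{-1}$; its limit is an independent centered Gaussian matrix whose variance structure is exactly that of $\theta_j^{-2}\mb H_j$. The appearance of $m_4$ in the diagonal variance arises from the coincident index $i=i$ in $\sum_{i,i'}X^{(2)}_{si}[(\mb X^{(3)}-\rho_j)^{-1}]_{ii'}X^{(2)}_{si'}$, while the $\sigma^4/(\theta_j^2-\sigma^2)$ contribution comes from $\sum_{i\neq i'}$ combined with the identity $\sum_i[(\mb X^{(3)}-\rho_j)^{-1}]_{ii}^2\to g_\sigma'(\rho_j)=1/(\theta_j^2-\sigma^2)$.

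Combining these steps, the eigenvalue equation becomes $\det(xI_{k_j}-\mb V_j)+o_\P(1)=0$, so the rescaled outliers converge to the eigenvalues of $\mb V_j$. The main obstacle is step three: establishing the CLT for the quadratic form with the correct variance, uniformly for $z$ in a $1/\sqrt N$-neighborhood of $\rho_j$, under only the Lindeberg-type moment conditions (\ref{offdiagreal}--\ref{diagreal2}) rather than under a Poincaré inequality as in \cite{CDF}. This requires a truncation to replace the entries by bounded ones (losing only $o(N^{-1/2})$ in relevant quantities), followed by a resolvent identity applied to $(\mb X^{(3)}-z)^{-1}$ to decouple dependence between the quadratic form and its weights, and finally a martingale decomposition along the rows/columns of $\mb X^{(2)}$ whose increments are controlled using the fourth-moment assumption and local-law-type bounds on the resolvent of the minor.
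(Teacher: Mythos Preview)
This theorem is not proved in the present paper; it is quoted verbatim from \cite{PRS} (as Theorem 1.3 there) and stated only to provide context for the Case~B result, Theorem~\ref{thm:caseB}, which is the paper's actual contribution. There is therefore no proof here to compare your proposal against.

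That said, your outline is sound and is essentially the argument underlying \cite{PRS}, repackaged. The paper's framework carries out your first reduction abstractly via Proposition~\ref{Lemma13}: in Case~A the eigenvectors $\mb u_N^1,\dots,\mb u_N^{k_j}$ are supported on $e_1,\dots,e_{K_j}$, so the matrix $\mb\Xi_N^j$ of that proposition is exactly $\sqrt N\,\mb U_j^*\bigl([\mb R_N(\rho_j)]_{K_j\times K_j}-\theta_j^{-1}I_{K_j}\bigr)\mb U_j$, and your Schur-complement expansion of the top-left $K_j\times K_j$ resolvent block is precisely how one then analyzes this object. The split into the raw $K_j\times K_j$ Wigner block $\mb X^{(1)}$ (producing $\mb W_j$) plus the centered quadratic form in the off-diagonal columns (producing the independent Gaussian $\mb H_j$) is the correct mechanism, and your identification of the $m_4$ contribution from coincident indices versus the $\sigma^4/(\theta_j^2-\sigma^2)$ contribution from distinct indices via $\sum_i (R^{(3)}_{ii})^2\to g_\sigma'(\rho_j)$ is right in spirit. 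One minor slip: since $(\mb X^{(3)}-z)^{-1}=-\mb R^{(3)}(z)$, the mean of $\mb X^{(2)}(\mb X^{(3)}-z)^{-1}(\mb X^{(2)})^*$ is $-\sigma^2 g_\sigma(z)I_{K_j}$, not $+\sigma^2 g_\sigma(z)I_{K_j}$, and correspondingly the overall prefactor in your linearization should carry a minus sign; this does not affect the structure of the argument. The genuine technical content---establishing the CLT for the quadratic form under only the fourth-moment Lindeberg conditions (\ref{offdiagreal})--(\ref{diagreal2}) rather than a Poincar\'e inequality---is exactly what \cite{PRS} supplies, using truncation together with resolvent-entry estimates of the type recorded here as Proposition~\ref{proposition:prop1}.
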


The next theorem deals with the Case B and is this paper's main result. Before 
stating the theorem we define the matrix of third moments on the off-diagonal by
\begin{equation}
\label{M3}
(\mb M_3)_{ij} := \mu_{3,ij}(1-\delta_{ij}),
\end{equation}
where $\mu_{3,ij} := \E[|W_{ij}|^2W_{ij}]$.
\begin{theorem}
\label{thm:caseB}
Let $\mb X_N=\frac{1}{\sqrt{N}} \mb W_N$ be a random real symmetric (Hermitian) Wigner matrix
defined in (\ref{offdiagreal}-\ref{diagreal2}) (respectively (\ref{offdiagherm1}-\ref{diagherm})). Let $\mb u_N^1, \ldots , \mb u_N^{k_j}$ be a set of orthogonal eigenvectors of $\mb A_N$ with eigenvalue $\theta_j$.
In Case B, the difference between the $k_j$-dimensional vector 
\[ \Big( c_{\theta_j}\* \sqrt{N}\*(\lambda_{k_1+\ldots +k_{j-1}+i} -\rho_j), \ i=1, \ldots, k_j \Big) \]
and the vector formed by the (ordered) eigenvalues of a $k_j\times k_j$ GOE (GUE) matrix with the variance of the matrix 
entries given by $\frac{\theta_j^2\*\sigma^2}{\theta_j^2-\sigma^2}$ plus a deterministic matrix with $lp^{th}$ entry $(1\leq l,p\leq k_j)$ given by 
$\frac{1}{\theta^2\*N} (\mb u_N^l)^* \mb M_3 \mb u_N^p$ converges to zero in probability. 

\end{theorem}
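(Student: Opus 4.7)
The plan is to reduce the problem to a $k_j \times k_j$ random matrix equation via a Sylvester-type resolvent identity, and then to perform a joint central-limit analysis of the relevant bilinear forms of the resolvent of $\mathbf{X}_N$, carefully tracking the non-Gaussian contribution coming from the third moments of the entries.

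\emph{Reduction to a fluctuation matrix.} Writing $\mathbf{A}_N=\mathbf{U}\Theta\mathbf{U}^*$ with $\mathbf{U}$ an $N\times r$ matrix of orthonormal eigenvectors of $\mathbf{A}_N$ and $\Theta$ diagonal with block $\theta_j\mathbf{I}_{k_j}$, any $\lambda$ outside the spectrum of $\mathbf{X}_N$ is an eigenvalue of $\mathbf{X}_N+\mathbf{A}_N$ if and only if
\[
\det\bigl(\mathbf{I}_r+\Theta\,\tilde G_N(\lambda)\bigr)=0,\qquad \tilde G_N(\lambda):=\mathbf{U}^*(\mathbf{X}_N-\lambda)^{-1}\mathbf{U},
\]
by Sylvester's identity. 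By Theorem~\ref{firsttheorem}, on the $\theta_j$-block $\tilde G_N^{(j)}(\rho_j)\to -g_\sigma(\rho_j)\mathbf{I}_{k_j}=-\theta_j^{-1}\mathbf{I}_{k_j}$. Setting $\lambda=\rho_j+\mu/\sqrt{N}$ and using $\partial_\lambda \tilde G_N^{(j)}(\rho_j)\to (\theta_j^2-\sigma^2)^{-1}\mathbf{I}_{k_j}$ (from differentiating $\sigma^2 g_\sigma^2-zg_\sigma+1=0$), the block characteristic equation reduces to
\[
F_N^{(j)}+\frac{\mu}{\theta_j^2-\sigma^2}\mathbf{I}_{k_j}=o_p(1),\qquad F_N^{(j)}:=\sqrt{N}\bigl(\tilde G_N^{(j)}(\rho_j)+g_\sigma(\rho_j)\mathbf{I}_{k_j}\bigr),
\]
so that $c_{\theta_j}\sqrt{N}(\lambda-\rho_j)$ coincides, up to $o_p(1)$, with the ordered eigenvalues of $-\theta_j^2 F_N^{(j)}$. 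The problem is thereby reduced to identifying the joint distribution of the $k_j^2$ entries $[F_N^{(j)}]_{lp}=\sqrt{N}\bigl((\mathbf{u}_N^l)^*(\mathbf{X}_N-\rho_j)^{-1}\mathbf{u}_N^p+g_\sigma(\rho_j)\delta_{lp}\bigr)$ in the delocalized regime.

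\emph{Joint CLT with third-moment shift.} The analytic heart of the proof is to establish
\[
F_N^{(j)}=\mathbf{H}_N-\frac{g_\sigma(\rho_j)^4}{N}\bigl[(\mathbf{u}_N^l)^*\mathbf{M}_3\mathbf{u}_N^p\bigr]_{l,p=1}^{k_j}+o_p(1),
\]
where $\mathbf{H}_N$ is a $k_j\times k_j$ GOE (GUE) matrix whose entry variance is chosen so that $\theta_j^2\mathbf{H}_N$ has variance $\frac{\theta_j^2\sigma^2}{\theta_j^2-\sigma^2}$; together with $g_\sigma(\rho_j)=1/\theta_j$ this yields, after multiplication by $-\theta_j^2$, exactly the deterministic matrix stated in the theorem. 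I would prove this via the Neumann series
\[
(\mathbf{X}_N-\rho_j)^{-1}=-\sum_{k\ge 0}\rho_j^{-k-1}\mathbf{X}_N^k,
\]
valid on the high-probability event $\|\mathbf{X}_N\|<\rho_j$, and a joint moment/cumulant analysis of the scalars $(\mathbf{u}_N^l)^*\mathbf{X}_N^k\mathbf{u}_N^p$ written as sums over walks on $[N]$. The delocalization hypothesis $\|\mathbf{u}_N^l\|_\infty\to 0$ plays the role of a Lindeberg condition: walks whose edges are all paired (in the second-cumulant sense) give a universal Gaussian limit with covariance dictated by $\sigma^2$ and $-g_\sigma'(\rho_j)$; walks containing exactly one triple-matched edge (and otherwise paired) resum, over all $k$, to the announced $g_\sigma(\rho_j)^4/N$ deterministic contribution through the $\mathbf{M}_3$ bilinear forms; all other walks are $o_p(1)$. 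A Weyl-type inequality then converts this matrix-level approximation into the claimed convergence in probability of the ordered eigenvalues.

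\emph{Main obstacle.} The principal difficulty lies in this last step. Under only the finite-fourth-moment Lindeberg hypothesis of (\ref{offdiagreal})--(\ref{diagreal2}), Gaussian integration-by-parts is not directly available, so the CLT must be established by combining a preliminary truncation (as in the appendix of \cite{PRS}) with a careful combinatorial or graphical cumulant expansion of the resolvent. This expansion has to track the full $k_j\times k_j$ array jointly, identify the correct real/complex symmetry class of the limiting independent Gaussian off-diagonal entries, and cleanly separate the third-cumulant shift from both the Gaussian fluctuations and the many subleading contributions arising from multiple unpaired edges and from higher cumulants.
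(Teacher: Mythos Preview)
Your reduction step via the Sylvester determinant identity is essentially the content of Proposition~\ref{Lemma13}, so at that stage you and the paper agree. The divergence is in how the joint limit of the fluctuation matrix $F_N^{(j)}$ (equivalently $\mathbf{\Xi}_N^j$) is obtained.

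The paper does \emph{not} expand the resolvent as a Neumann series and count walks. Instead it separates the problem into two independent results: the centering (Theorem~\ref{centering}) is computed by applying the resolvent identity $zR_{ij}=\delta_{ij}+\sum_k X_{ik}R_{kj}$ together with the decoupling/cumulant formula \eqref{decouple}, which isolates the $\widehat\mu_{3,ij}\,g_\sigma^4(z)/N^{3/2}$ term directly at the level of $\E[\widehat R_{ij}]$; the Gaussian fluctuations (Theorem~\ref{fluctuations}) are proved by writing $G_{lp}(z)$ as a martingale difference sequence in the rows of $\mathbf{X}_N$ and invoking the martingale CLT (Theorem~\ref{MCLT}), following the Bai--Pan scheme with a Steinitz-type permutation (Corollary~\ref{SteinitzC}) to evaluate the conditional variances. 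These two pieces are then reassembled in Section~\ref{mainthm} with a truncation lemma handling the passage from $\mathbf{R}_N$ to $\widehat{\mathbf{R}}_N$.

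Your moment/walk approach is a genuinely different route. It has the appeal of being self-contained and of extracting the third-moment shift and the Gaussian limit in one stroke, but it buys this at the cost of a delicate combinatorial analysis of weighted walks with endpoint weights $\overline{u^l_a}\,u^p_b$, where the delocalization hypothesis must be fed in by hand at the level of individual path contributions; it also requires restricting to the high-probability event $\|\mathbf{X}_N\|<|\rho_j|$ and controlling the tail of the Neumann series uniformly. The paper's resolvent/martingale machinery avoids both issues: the resolvent bounds $\|R_N(z)\|\le |\Im z|^{-1}$ are deterministic, the decoupling formula gives the $g_\sigma^4$ coefficient without any series resummation, and the martingale CLT handles the delocalization condition and the Lindeberg-type fourth-moment assumption in a single standard verification. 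Your outline is plausible, but the path-counting CLT for bilinear forms in general delocalized vectors under only \eqref{lind1}--\eqref{lind2} is not a routine adaptation of the trace case, and you have correctly flagged that as the main obstacle without resolving it.
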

The proof of Theorem \ref{thm:caseB} is in Section \ref{mainthm}.

Theorem \ref{thm:caseB} was originally proved in \cite{CDF} under additional technical assumptions that the entries are i.i.d., their marginal 
distribution is symmetric and satisfies the Poincar\'e inequality, and
$k=o(\sqrt{N}),$  where we recall that $k$ is
the minimal number of canonical basis vectors $e_1, \ldots, e_N$ of $\C^N$ required to span all the eigenvectors 
corresponding to the eigenvalues bigger than $\sigma.$  In \cite{KYlsc}, Knowles and Yin prove Theorem \ref{thm:caseB}
provided $\theta_j$ is a simple eigenvalue.  It should be noted that Knowles and Yin allow the non-zero eigenvalues of $\mb A_N$ to depend on $N$ as 
long 
$$ ||\theta_j|-2\sigma| \geq \phi^{C_1} \*N^{-1/3}, \ \ \min_{j\not=i} |\theta_j-\theta_i|\geq \phi^{C_1} \*N^{-1/2}\* (|\theta_i|-2\*\sigma|)^{-1/2},$$
where $\phi:=(\log N)^{\log \log N},$ and $C_1$ is a positive constant.
In their approach, Knowles and Yin need an additional technical assumption, namely they require 
that the marginal distributions of the entries of $\mb W_N$ are uniformly sub-Gaussian, in a sense that
$$\P(|(\mb W_N)_{ij}|\geq x) \leq d^{-1}\*\exp(-x^d)$$
for some $d>0.$ After this paper was posted online, Knowles and Yin extended their results to arbitrary deterministic matrices with bounded norm and fixed rank in \cite{KYout}.

In addition to the results mentioned above, Knowles and Yin prove the universality of the limit distribution of the largest ``sticking'' 
eigenvalues of
$\mb X_N +\mb A_N$, i.e. the eigenvalues that correspond to $|\theta_j|<\sigma$ (see Theorem 2.7 in \cite{KYlsc}). In other words, they prove that 
the limit distribution of the 1st, 2nd, 3rd, etc largest ``sticking'' eigenvalues of $\mb X_N +\mb A_N$ (i.e. $\lambda_{k_{+\sigma}1},
\lambda_{k_{+\sigma}+2}, \lambda_{k_{+\sigma}+3}$, etc)
coincides with the limit distribution of the 
1st, 2nd, 3rd, etc largest eigenvalues of $\mb X_N.$  We recall that $k_{+\sigma}$ denotes the number of the eigenvalues of $\mb A_N$ greater than 
$\sigma$ (counting with multiplicities).  In the Gaussian case,
the limiting distribution of the largest eigenvalues of a GUE (GOE) random matrix was first studied by Tracy and Widom in \cite{TW1} and \cite{TW2}. It 
is now known as the Tracy-Widom distribution.  For the results about local universality in non-perturbed Wigner 
matrices we refer the reader to \cite{J}, \cite{EKYY}, \cite{E}, \cite{TV1}, \cite{TV}, \cite{KY}, \cite{S}, and references therein.
In the case of random $\mb A_N$ the 
universality of the distribution of ``sticking'' eigenvalues was proved by Benaych-Georges, Guionnet, and Maida in \cite{BGM}.

The first step in proving Theorem \ref{thm:caseB} is to use Proposition \ref{Lemma13} to associate the fluctuations of the eigenvalues outside of the 
support of the semicircle law with quadratic forms of the resolvent of the unperturbed Wigner matrix, given by
\[ \mb R_N(z) := (z \mb I_N - \mb X_N)^{-1} .\]  Let us denote by
\begin{equation}
\label{scalarprod}
\langle u, v \rangle:=\sum_{i=1}^N \overline{u_i} \* v_i,
\end{equation}
the standard scalar product in $\C^N$ and by 
\[ \| u \| = \sqrt{\langle u, u \rangle} \]
the induced norm on $\C^N$. 

\begin{proposition}[Proposition 1.1 in \cite{PRS}] 
\label{Lemma13}
Let $\theta_j$ be an eigenvalue of $\mb A_N$ that is greater in magnitude than $\sigma$ and let $\mb u_N^1, \ldots, \mb u_N^{k_j}$ be an orthonormal 
set of eigenvectors of $\mb A_N$ associated with $\theta_j$. 
Let $\mb \Xi^{j}_N $ be the $k_j \times k_j $ matrix with entries
\begin{equation}
\label{thetamatrica}
\Xi^{j}_{lp} :=\sqrt{N}\* \left(\langle \mb u_N^{l}, \mb R_N(\rho_j) \mb u_N^{p}\rangle - g_{\sigma}(\rho_j) \*\delta_{lp}\right)=
\sqrt{N}\* \left(\langle \mb u_N^{l}, \mb R_N(\rho_j) \mb u_N^{p}\rangle - \frac{1}{\theta_j}\*\delta_{lp}\right).
\end{equation}

Let $y_1 \geq \ldots \geq y_{k_j}$ be the ordered eigenvalues of the matrix $\mb \Xi^{j}_N.$
Then
\begin{equation}
\label{uzheuzhe}
\sqrt{N}\*( \lambda_{k_1+\ldots +k_{j-1}+i} -\rho_j )+ \frac{1}{g_{\sigma}'(\rho_j)} \* y_i \to 0,
\ i=1, \ldots, k_j,
\end{equation}
in probability.
\end{proposition}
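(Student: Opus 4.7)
The plan is to reduce the eigenvalue problem for $\mb X_N + \mb A_N$ to a finite-rank secular equation and Taylor expand around $\rho_j$. Write $\mb A_N = \mb U\mb\Theta\mb U^*$ with $\mb U = [\mb U_1\,|\,\cdots\,|\,\mb U_J]$, where the columns of $\mb U_i$ are the orthonormal eigenvectors of $\mb A_N$ corresponding to $\theta_i$ (the zero eigenvalue is omitted) and $\mb\Theta$ is the corresponding block-diagonal matrix of non-zero eigenvalues. For $\lambda \notin \mathrm{spec}(\mb X_N)$, the Weinstein--Aronszajn identity yields
\[
\det(\lambda \mb I_N - \mb X_N - \mb A_N) = \det(\lambda \mb I_N - \mb X_N)\cdot\det(\mb\Theta)\cdot\det\!\bigl(\mb\Theta^{-1} - \mb U^*\mb R_N(\lambda)\mb U\bigr),
\]
so the outliers of $\mb X_N + \mb A_N$ are the zeros, outside $\mathrm{spec}(\mb X_N)$, of the $r\times r$ secular function $H_N(\lambda) := \det(\mb\Theta^{-1} - \mb U^*\mb R_N(\lambda)\mb U)$.

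The next step is to localise $H_N(\lambda)=0$ near $\rho_j$. The $(i,l)$-block of $\mb\Theta^{-1} - \mb U^*\mb R_N(\lambda)\mb U$ equals $\theta_i^{-1}\delta_{il} I_{k_i} - \mb U_i^*\mb R_N(\lambda)\mb U_l$. Isotropic concentration of quadratic forms of the resolvent (the same $O_P(N^{-1/2})$ estimate used in the proof of Theorem \ref{firsttheorem} in \cite{PRS}) gives $\mb U_i^*\mb R_N(\lambda)\mb U_l \to g_\sigma(\lambda)\delta_{il} I_{k_i}$ uniformly on compact subsets of $\C\setminus[-2\sigma,2\sigma]$. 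Since $g_\sigma(\rho_j) = 1/\theta_j \neq 1/\theta_i$ for $i\neq j$, every diagonal block except the $(j,j)$-block is non-singular and uniformly bounded away from $0$ in a fixed complex neighbourhood of $\rho_j$, while all off-diagonal blocks are $O_P(N^{-1/2})$. Taking the Schur complement with respect to the $(j,j)$-block, the equation $H_N(\lambda) = 0$ reduces (up to a non-vanishing factor) to
\[
\det\!\bigl(\theta_j^{-1}I_{k_j} - \mb U_j^*\mb R_N(\lambda)\mb U_j + \mathcal{E}_N(\lambda)\bigr) = 0,
\]
where the Schur correction satisfies $\mathcal{E}_N(\lambda) = O_P(1/N)$ uniformly on a deterministic neighbourhood of $\rho_j$.

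Now substitute $\lambda = \rho_j + tN^{-1/2}$ with $t$ in a compact set. By the definition \eqref{thetamatrica} and $g_\sigma(\rho_j) = 1/\theta_j$,
\[
\mb U_j^*\mb R_N(\rho_j)\mb U_j = \theta_j^{-1}I_{k_j} + N^{-1/2}\mb\Xi^j_N.
\]
An analogous isotropic estimate for $\mb R_N'$ gives $\mb U_j^*\mb R_N'(\rho_j)\mb U_j = g_\sigma'(\rho_j) I_{k_j} + o_P(1)$, while the quadratic Taylor remainder contributes $O_P(t^2/N)$. Multiplying through by $-\sqrt{N}$ and absorbing $\mathcal{E}_N$, the reduced equation becomes
\[
\det\!\bigl(\mb\Xi^j_N + t\, g_\sigma'(\rho_j)\,I_{k_j} + o_P(1)\bigr) = 0.
\]
Its $k_j$ ordered solutions are therefore $t_i = -y_i/g_\sigma'(\rho_j) + o_P(1)$, with the ordering preserved because $g_\sigma'(\rho_j) < 0$ (for both $\theta_j > \sigma$ and $\theta_j < -\sigma$). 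Combined with Theorem \ref{firsttheorem}, which guarantees that each $\lambda_{k_1+\cdots+k_{j-1}+i}$ lies in an $o(1)$-neighbourhood of $\rho_j$ with high probability and hence must correspond to one of these $k_j$ solutions, this yields \eqref{uzheuzhe}.

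The main obstacle is not conceptual but technical: one needs the isotropic concentration $\langle\mb u,\mb R_N(\lambda)\mb v\rangle - g_\sigma(\lambda)\langle\mb u,\mb v\rangle = o_P(1)$ and its resolvent-derivative analogue, uniformly for $\lambda$ in a small deterministic neighbourhood of $\rho_j$, under only the finite-fourth-moment hypothesis, together with an a priori localisation of the outliers on scale $o(1)$. Both ingredients are exactly the resolvent estimates established for Wigner matrices in \cite{PRS}; once they are in hand, the $r$-dimensional perturbation computation above delivers \eqref{uzheuzhe}.
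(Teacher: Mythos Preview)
The paper does not actually prove this proposition: it is quoted verbatim as Proposition~1.1 of \cite{PRS}, and no argument is given here. So there is no ``paper's own proof'' to compare against in this document.

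That said, your outline is the standard and correct one, and it matches the approach used in \cite{PRS} and related works such as \cite{CDF}, \cite{BGM}. The reduction via the Weinstein--Aronszajn (Sylvester) determinant identity, isolating the $(j,j)$-block by Schur complement since the other diagonal blocks $\theta_i^{-1}I_{k_i}-g_\sigma(\rho_j)I_{k_i}$ are invertible for $i\neq j$, and then Taylor-expanding $\mb U_j^*\mb R_N(\rho_j+tN^{-1/2})\mb U_j$ around $\rho_j$ is exactly the machinery behind \eqref{uzheuzhe}. Your identification of the needed technical inputs---isotropic concentration $\langle\mb u,\mb R_N(\lambda)\mb v\rangle=g_\sigma(\lambda)\langle\mb u,\mb v\rangle+O_P(N^{-1/2})$ uniformly near $\rho_j$, the analogous statement for $\mb R_N'=-\mb R_N^2$, and the a~priori localisation from Theorem~\ref{firsttheorem}---is accurate, and these are precisely what is supplied in \cite{PRS}. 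One small remark: to justify the $o_P(1)$ uniformly in $t$ on compacta (needed so that all $k_j$ zeros are captured and matched to the $\lambda_{k_1+\cdots+k_{j-1}+i}$), one typically either invokes tightness of $\mb\Xi_N^j$ or a Rouch\'e/Hurwitz counting argument in a shrinking complex disc; you allude to this but it is worth making explicit.
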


\begin{remark}
A simple computation gives
\begin{equation}
g_{\sigma}(\rho_j) = \frac{1}{\theta_j}, ~~~ -\frac{1}{g_{\sigma}'(\rho_j)}= \theta_j^2 - \sigma^2.
\end{equation}
\end{remark}

In the second step of proving Theorem \ref{thm:caseB}, we truncate and remove the diagonal terms, defining  
\begin{equation}
\label{benfica}
(\widehat{\mb W}_N)_{ij} :=
(\mb W_N)_{ij} (1-\delta_{ij})\indicator{(\mb W_N)_{ij}\leq N^{1/4}} -\E (\mb W_N)_{ij} (1-\delta_{ij})\indicator{(\mb W_N)_{ij}\leq N^{1/4}},
\end{equation}
$\widehat{\mb X}_N:=\frac{1}{\sqrt{N}} \widehat{\mb W}_N$, and $\mb{ \widehat{R}}_N(z):=(z\*\mb I_N-\widehat{\mb X}_N )^{-1} $ for $z \in \C. $ 
Our goal is to compute the centering, $\E \langle \mb u_N^{l},  \mb{ \widehat{R}}_N(z) \mb u_N^{p} \rangle$, which
in general, is 
dependent on the form of the vectors.  Define
\begin{equation}
\label{wM3}
(\mb{ \widehat{M}}_3)_{ij} := \widehat{\mu}_{3,ij}(1-\delta_{ij}),
\end{equation}
where $\widehat{\mu}_{3,ij} := \E[|\widehat{W}_{ij}|^2\widehat{W}_{ij}]$.

In the appendix (Section \ref{RODT}) we show that the difference
\[\sqrt{N}(\langle \mb u_N,  \mb{ \widehat{R}}_N(z) \mb v_N \rangle - \langle \mb u_N,  \mb{ R}_N(z) \mb v_N \rangle) \]
goes to zero in probability. It is therefore sufficient to use $\E[\langle \mb u_N,  \mb{ \widehat{R}}_N(z) \mb v_N \rangle]$ as the centering for $\langle \mb u_N,  \mb{ R}_N(z) \mb v_N \rangle$ when proving distributional conference (see proof of Theorem \ref{thm:caseB}).

\begin{theorem}
\label{centering}

Let $\mb X_N=\frac{1}{\sqrt{N}} \mb W_N$ be a random real symmetric (Hermitian) Wigner matrix
defined in (\ref{offdiagreal}-\ref{diagreal2}) (respectively (\ref{offdiagherm1}-\ref{diagherm})) and $\widehat{\mb X}_N$, $\widehat{\mb R}_N$ are the
matrices defined after (\ref{benfica}).
Let $\mb u_N, \mb v_N$ be $N$-dimensional unit vectors. 

 \begin{align}
 \label{limitexp}
 \sqrt{N}  \left[ \E \langle \mb u_N,  \mb{ \widehat{R}}_N(z) \mb v_N \rangle 
- g_\sigma(z) \*\langle \mb u_N, \mb v_N \rangle \right] =
\frac{1}{N}g^4_\sigma(z) \langle \mb u_N, \mb{ \widehat{M}}_3 \mb v_N \rangle 
+ O\left(\frac{P_{12}(|\Im(z)^{-1})\*(|z|+1)}{\sqrt{N}}\right), 
 \end{align}
uniformly on $\C \setminus \R,$ 
where 
$P_{12}$ is a polynomial of degree $12$ with fixed positive coefficients.

\end{theorem}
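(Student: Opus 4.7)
The plan is to start from the scalar resolvent identity
\begin{equation*}
z\, \E \langle \mb u_N, \widehat{\mb R}_N(z) \mb v_N\rangle = \langle \mb u_N, \mb v_N\rangle + \sum_{i\neq k}\sum_j \bar u_i\, \E\big[\widehat X_{ik}\,\widehat R_{kj}(z)\big]\,v_j,
\end{equation*}
where the diagonal $(i=k)$ terms vanish because $\widehat{W}_{ii}\equiv 0$ by (\ref{benfica}). The main tool is a cumulant (integration-by-parts) expansion of each mixed moment $\E[\widehat X_{ik}\widehat R_{kj}]$, truncated after order four. Since $|\widehat W_{ik}|\le 2N^{1/4}$, one has $\kappa_2(\widehat X_{ik})=\sigma^2/N+O(N^{-3/2})$, $\kappa_3(\widehat X_{ik})=\widehat\mu_{3,ik}/N^{3/2}$, and $|\kappa_p(\widehat X_{ik})|=O(N^{-(p+4)/4})$ for $p\ge 4$; each application of $\partial_{X_{ik}}$ to a resolvent entry produces a polynomial in other entries of $\widehat{\mb R}_N$, each bounded by $|\Im z|^{-1}$, which controls both the explicit cumulant sums and the Taylor remainder.

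The $\kappa_2$ terms produce the self-consistent equation. Using $\partial_{X_{ik}}R_{kj}=R_{ki}R_{kj}+R_{kk}R_{ij}$ (for $i\neq k$), summation against $\bar u_i v_j$ gives
\begin{equation*}
\sigma^2\E\Big[\tfrac1N\langle \mb u_N,\widehat{\mb R}_N^2\mb v_N\rangle + \tfrac1N\Tr(\widehat{\mb R}_N)\,\langle \mb u_N,\widehat{\mb R}_N\mb v_N\rangle\Big],
\end{equation*}
where the first summand is $O(1/N)$ and the second supplies the multiplicative $\sigma^2 g_\sigma\,\E\langle \mb u_N,\widehat{\mb R}_N\mb v_N\rangle$ after invoking the standard mean estimate $\tfrac1N \E\,\Tr\widehat{\mb R}_N(z) = g_\sigma(z) + O(1/N)$ (itself proved by a parallel cumulant argument). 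Combining with the LHS and dividing by $z-\sigma^2 g_\sigma = g_\sigma^{-1}$ yields $\E\langle \mb u_N,\widehat{\mb R}_N\mb v_N\rangle = g_\sigma\langle \mb u_N,\mb v_N\rangle + g_\sigma\cdot(\kappa_{\ge 3}\text{ contributions}) + O(1/N)$.

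The $\widehat{\mb M}_3$ correction is extracted from the $\kappa_3$ term. A direct calculation yields
\begin{equation*}
\partial^2_{X_{ik}}R_{kj} = 2R_{ki}^2R_{kj} + 2R_{kk}R_{ii}R_{kj} + 4R_{ki}R_{kk}R_{ij};
\end{equation*}
at the leading approximation $\widehat R_{ab}\approx g_\sigma\delta_{ab}$, only the middle summand survives when $i\neq k$, taking the value $2g_\sigma^3\delta_{kj}$. Multiplying by $\tfrac12\kappa_3(\widehat X_{ik})$ and summing $\bar u_i v_j$ produces $N^{-3/2}g_\sigma^3\langle \mb u_N,\widehat{\mb M}_3\mb v_N\rangle$, and the outer factor $g_\sigma$ from inverting $z-\sigma^2 g_\sigma$ promotes $g_\sigma^3$ to $g_\sigma^4$; multiplication of both sides by $\sqrt N$ reproduces precisely the leading correction in (\ref{limitexp}).

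The main obstacle is the uniform-in-$z$ error control. Every sub-leading piece (the $\tfrac1N\langle u,\widehat R^2 v\rangle$ term, the fluctuation of $\tfrac1N\Tr\widehat R$, the off-diagonal resolvent entries neglected in the $\kappa_3$ evaluation, and the $p\ge 4$ cumulant sums) must be bounded by $O(N^{-1}P_{12}(|\Im z|^{-1})(|z|+1))$. The typical tool is a Cauchy--Schwarz inequality
\begin{equation*}
\Big|\sum_{i,k}\bar u_i R_{ki}(\cdots)\Big|\le \|\mb u_N\|\,\Big(\sum_{i,k}|R_{ki}|^2\Big)^{1/2}(\cdots)\le N^{1/2}|\Im z|^{-1}(\cdots),
\end{equation*}
using $\sum_{i,k}|R_{ki}|^2=\Tr(\widehat{\mb R}_N^*\widehat{\mb R}_N)\le N|\Im z|^{-2}$. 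Each resolvent factor accumulated during the expansion costs one power of $|\Im z|^{-1}$, and tracking the worst term through the truncated cumulant expansion yields degree at most $12$, producing the polynomial $P_{12}$. Without recourse to the stronger (and here unavailable) isotropic local law, this careful bookkeeping is the most delicate part of the argument.
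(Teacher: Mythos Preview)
Your approach and the paper's share the same backbone---resolvent identity plus a truncated cumulant (decoupling) expansion---but are organized differently. The paper does not attack the bilinear form directly; it first proves the \emph{entrywise} refinement $\E[\widehat R_{ij}(z)]=\widehat\mu_{3,ij}N^{-3/2}g_\sigma^4(z)+O(P_{12}(|\Im z|^{-1})N^{-2})$ for $i\ne j$ (Proposition~\ref{prop:expij3}), and only then sums against $\bar u_i v_j$. Since $\|\mb u_N\|_1\|\mb v_N\|_1\le N$, the $O(N^{-2})$ entrywise error contributes $O(N^{-1})$ to the bilinear form, exactly the claimed remainder in (\ref{limitexp}).

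The gap in your sketch is the error control for the ``diagonal'' pieces of the $\kappa_3$ and $\kappa_4$ derivatives. Consider the middle $\kappa_3$ term $R_{kk}R_{ii}R_{kj}$; after separating the $j=k$ contribution that gives $g_\sigma^3\langle\mb u_N,\widehat{\mb M}_3\mb v_N\rangle$, one is left with
\[
\frac{1}{N^{3/2}}\sum_{i\neq k}\bar u_i\,\widehat\mu_{3,ik}\,\Bigl(\E[R_{kk}R_{ii}(\widehat R\mb v_N)_k]-g_\sigma^3 v_k\Bigr).
\]
Cauchy--Schwarz together with the variance bounds (\ref{odinnadtsat102}), (\ref{holland}) controls the covariance part, but the piece $g_\sigma^2\bigl(\E[(\widehat R\mb v_N)_k]-g_\sigma v_k\bigr)=g_\sigma^2\sum_{j\neq k}\E[\widehat R_{kj}]v_j$ is only $O(N^{-1/2})$ if one uses merely (\ref{odinnadtsat101}); after the double sum over $i,k$ with weight $\sum_{i,k}|\bar u_i|\le N^{3/2}$, the net bound is $O(N^{-1/2})$, one power of $N^{-1/2}$ short. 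The analogous $\kappa_4$ term $N^{-2}\sum_{i\neq k}\bar u_i\kappa_{4,ik}\E[R_{kk}^2R_{ii}(\widehat R\mb v_N)_i]$ has the same defect. The paper closes this by a genuine second step: Lemma~\ref{3term} applies the resolvent identity and decoupling formula \emph{again} to decorrelate triple products such as $\E[R_{12}R_{1k}R_{kk}]$ into $\E[R_{12}]\E[R_{1k}R_{kk}]+O(N^{-2})$, after which the improved off-diagonal bound (\ref{odinnadtsat103}) (available for the truncated matrix) kills the remaining expectations. This iteration---not bookkeeping---is the technical heart of Proposition~\ref{prop:expij3}, and your direct bilinear route cannot avoid an equivalent device.
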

\begin{remark}
\label{zenith}
Since $\|\mb{ \widehat{M}}_3 \| =O(N), $ we have $\frac{1}{N} \* \langle \mb u_N, \mb{ \widehat{M}}_3 \mb v_N \rangle =O(1).$
Furthermore, if $\|\mb u_N\|_1$ or  $ \|\mb v_N\|_1$ is $o(\sqrt{N})$ then  $\frac{1}{N} \* \langle \mb u_N, \mb{ \widehat{M}}_3 \mb v_N 
\rangle=o(1)$.
In addition, since \\
$\widehat{\mu}_{3,ij}-\mu_{3,ij}=O(N^{-1/4})$ uniformly in $i,j,$  it follows that
\begin{equation}
\label{mnogo}
\frac{1}{N} \* \langle \mb u_N, \mb{ \widehat{M}}_3 \mb v_N \rangle- \frac{1}{N} \* \langle \mb u_N, \mb{M}_3 \mb v_N \rangle=O(N^{-1/4}).
\end{equation}
\end{remark}
The proof of Theorem \ref{centering} is in Section \ref{sec:exp}. 

In the final step we compute the joint asymptotic distribution of the matrix entries of $\mb \Xi^{j}_N $. Before stating the theorem we introduce the notation 
\begin{equation}
\label{picov}
\Pi(z_1,z_2) := \left(-g_\sigma(z_1) g_\sigma(z_2) + \frac{g_\sigma(z_1) g_\sigma(z_2)}{1-\sigma^2 g_\sigma(z_1) g_\sigma(z_2)  }\right)
\end{equation}

\begin{theorem}
\label{fluctuations}
Let $\mb X_N=\frac{1}{\sqrt{N}} \mb W_N$ be a random real symmetric (Hermitian) Wigner matrix
defined in (\ref{offdiagreal}-\ref{diagreal2}) (respectively (\ref{offdiagherm1}-\ref{diagherm})).
Let $\mb u_N^{1}, \ldots, \mb u_N^{m}$  be a sequence of mutually orthogonal $N$ dimensional unit vectors such that $\|\mb u_N^{l}\|_{\infty} \to 0$ 
for $l = 1,\ldots, m$ as $N \to \infty$. Let $\mb G_N(z)$ be the $m\times m$ matrix defined by:
\begin{equation}
\label{spartak}
\left(\mb G_N(z)\right)_{lp} :=  \sqrt{N}\left(\langle \mb u_N^{l}, \mb{ \widehat R}_N(z) \mb u_N^{p} \rangle - \E
\langle \mb u_N^{l}, \mb{ \widehat R}_N(z) \mb u_N^{p} \rangle \right).  
\end{equation}
The matrix valued function $\mb G_N(z)$ converges in finite dimensional distributions to the $m \times m$ matrix-valued random field, $\mb \Gamma(z)$ 
with independent, centered, Gaussian entries with covariance given by:
\begin{align*}
\E[\Re(\Gamma_{lp}(z_1))\Re(\Gamma_{lp}(z_2))] &=  \frac{\rho}{4}(\Pi(z_{1},z_{2})  + \Pi(\overline z_{1}, \overline z_{2}) ) + \frac{1}{4}(\Pi(z_{1} ,
\overline z_{2})  + \Pi(\overline z_{1}, z_{2}) )\\
&+\frac{\delta_{lp}}{4}(\rho(\Pi(z_{1},\overline z_{2})  + \Pi(\overline z_{1},  z_{2}) ) + \Pi(z_{1} , z_{2})  + 
\Pi(\overline z_{1},\overline z_{2}) )\\
\E[\Re(\Gamma_{lp}(z_1))\Im(\Gamma_{lp}(z_2))] &= \frac{\rho}{4i}(\Pi(z_1,z_2)  - \Pi(\overline z_1, \overline z_2) ) + \frac{1}{4}(-\Pi(z_1 ,
\overline z_2)  + \Pi(\overline z_1, z_2) )\\
&+\frac{\delta_{lp}}{4i}(\rho(-\Pi(z_1,\overline z_2)  + \Pi(\overline z_1,  z_2) ) + \Pi(z_1 , z_2)  - \Pi(\overline z_1,\overline z_2) )\\
\E[\Im(\Gamma_{lp}(z_1))\Im(\Gamma_{lp}(z_2))] &=  \frac{-\rho}{4}(\Pi(z_1,z_2)  + \Pi(\overline z_1, \overline z_2) ) + \frac{1}{4}(-\Pi(z_1 ,
\overline z_2)  - \Pi(\overline z_1, z_2) )\\
&+\frac{-\delta_{lp}}{4}( \rho(-\Pi(z_1,\overline z_2)  - \Pi(\overline z_1,  z_2) ) + \Pi(z_1 , z_2)  + \Pi(\overline z_1,\overline z_2) )
\end{align*}
for $l \leq p$
and $\Gamma_{lp}(z) = \overline{\Gamma_{pl}(\overline{z})}$ for $l > p$. Where $\rho = 1$ if $\mb W_N$ is real symmetric and $\rho = 0$ if $\mb W_N$ is 
Hermitian.
\end{theorem}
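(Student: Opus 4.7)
The plan is to prove the multivariate Gaussian convergence via a martingale central limit theorem applied to a decomposition of $\mb G_N(z)$ across the row/column index. By the Cram\'er--Wold device and linearity of $G_N^{lp}$ in the entries of $\widehat{\mb R}_N$, it suffices to establish a one-dimensional CLT for every finite real linear combination
$$S_N = \sum_{s} \alpha_s \Re(G_N^{l_s p_s}(z_s)) + \beta_s \Im(G_N^{l_s p_s}(z_s))$$
with limiting variance matching the prescribed covariance structure in terms of $\Pi$. Introduce the filtration $\mathcal{F}_k = \sigma\{(\widehat{\mb W}_N)_{ij} : i \vee j \leq k\}$, set $\E_k = \E[\,\cdot\,|\mathcal{F}_k]$, and define the martingale difference
$$D_k^{lp}(z) := (\E_k - \E_{k-1})\langle \mb u_N^l, \widehat{\mb R}_N(z) \mb u_N^p \rangle,$$
so that $G_N^{lp}(z) = \sqrt{N}\sum_{k=1}^N D_k^{lp}(z)$.

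To evaluate $D_k^{lp}(z)$, I would use the rank-two block/Schur complement identity relating $\widehat{\mb R}_N(z)$ to the deflated resolvent $\widehat{\mb R}_N^{(k)}(z)$ of the matrix with the $k$-th row and column removed; this expresses $D_k^{lp}$, up to negligible corrections, as bilinear expressions in the $k$-th row $w_k$ of $\widehat{\mb W}_N$ contracted against $\widehat{\mb R}_N^{(k)}(z)$. The standard concentration bound $w_k^* A w_k / N = \sigma^2 \tr(A)/N + o(1)$ for deterministic $A$, combined with $\tr \widehat{\mb R}_N^{(k)}(z)/N \to g_\sigma(z)$, drives the Schur denominator toward $z - \sigma^2 g_\sigma(z) = 1/g_\sigma(z)$ via the self-consistent relation, reducing each increment to its leading order.

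The heart of the argument is the computation of the conditional quadratic variation $\sum_k \E_{k-1}[D_k^{lp}(z_1) D_k^{l'p'}(z_2)]$, along with the mixed pairings involving $\overline{z_1}, \overline{z_2}$ that encode the real/imaginary covariances. After substituting the leading Schur expression and using concentration of quadratic forms, this reduces to sums of the form $N^{-1}\sum_k (\mb u_N^{\cdot})^* \widehat{\mb R}_N^{(k)}(z_1) A \widehat{\mb R}_N^{(k)}(z_2) \mb u_N^{\cdot}$ for appropriate $A$, which I would evaluate using the product-resolvent identity
$$\widehat{\mb R}_N(z_1)\widehat{\mb R}_N(z_2) = \frac{\widehat{\mb R}_N(z_1) - \widehat{\mb R}_N(z_2)}{z_2 - z_1}$$
and the geometric resummation $\sum_{n\geq 0}(\sigma^2 g_\sigma(z_1) g_\sigma(z_2))^n = (1 - \sigma^2 g_\sigma(z_1) g_\sigma(z_2))^{-1}$ to produce the kernel $\Pi(z_1,z_2)$ of (\ref{picov}). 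The factor $\rho$ distinguishing real symmetric from Hermitian ensembles arises from the additional Wick pairing $\E[w_{kj}^2] = \sigma^2$ available in the real case, while the $\delta_{lp}$ contributions come from the pairings that force both bilinear forms to involve the same ordered pair of vectors, which are only present when $(l,p) = (p,l)$ as a set and only contribute in the $l=p$ case after orthogonality is accounted for.

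The Lindeberg condition for the martingale CLT will follow from the delocalization hypothesis $\|\mb u_N^l\|_\infty \to 0$: a straightforward estimate using $\|\widehat{\mb R}_N(z)\| \leq |\Im z|^{-1}$ shows that each $\sqrt{N}\,D_k^{lp}(z)$ is bounded in $L^2$ by a quantity involving $\|\mb u_N^l\|_\infty \vee \|\mb u_N^p\|_\infty$, hence is uniformly $o(1)$. The main obstacle I expect is the careful bookkeeping of the quadratic variation, in particular organizing pairings according to whether the row index coincides with the column index of $w_k$ (producing diagonal vs.\ off-diagonal contributions), separating terms that survive the $\|\mb u_N^l\|_\infty \to 0$ limit from those that vanish, and tracking the intertwining of $z, \overline z$ pairings needed to obtain the separate real--real, real--imaginary, imaginary--imaginary covariances stated in the theorem. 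A secondary, more technical task is showing that the error terms in the Schur complement expansion are negligible when multiplied by $\sqrt{N}$, uniformly in $k$; this will use resolvent norm bounds together with concentration estimates for bilinear forms of the type developed in Section \ref{sec:exp}.
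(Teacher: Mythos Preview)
Your overall architecture---Cram\'er--Wold, martingale decomposition along the minor filtration, Schur complement/rank-one updates, then a martingale CLT---is exactly the paper's (following Bai--Pan). The gap is in how you propose to evaluate the conditional quadratic variation.

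After integrating out the $k$-th row/column, the pairwise terms are not of the form $(\mb u_N^p)^* \widehat{\mb R}_N^{(k)}(z_1)\,A\,\widehat{\mb R}_N^{(k)}(z_2)\,\mb u_N^q$ with $A$ independent of $k$. They are of the form
\[
u_k^l\,u_k^r\;(\mb u_N^p)^*\,\E_{k-1}\bigl[\widehat{\mb R}_N^{(k)}(z_1)\bigr]\,\mb I_k\,\E_{k-1}\bigl[\widehat{\mb R}_N^{(k)}(z_2)\bigr]\,\overline{\mb u}_N^q,
\]
where $\mb I_k=\sum_{j>k}e_je_j'$ is the projection onto the coordinates not yet integrated out. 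The presence of $\mb I_k$ blocks any direct use of the resolvent product identity $R(z_1)R(z_2)=(R(z_1)-R(z_2))/(z_2-z_1)$, and the prefactor $u_k^l u_k^r$ does not collapse to $N^{-1}$. The paper instead derives a one-step self-consistent relation for this quantity (by expanding $e_j'\widehat{\mb R}^{(k)}\mb u$ once more), whose solution has denominator $1-\tfrac{N-k}{N}\sigma^2 g_\sigma(z_1)g_\sigma(z_2)$ and numerator $g_\sigma(z_1)g_\sigma(z_2)\sum_{j>k}\overline u_j^p\,\overline u_j^q$.

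The second missing ingredient is how to pass from these $k$-dependent sums to $\Pi(z_1,z_2)$ and to prove independence across distinct $(l,p)$. The paper uses a Steinitz-type rearrangement (Corollary~\ref{SteinitzC}): one permutes the coordinates---equivalently, conjugates $\mb W_N$ by a permutation, which preserves the Wigner hypotheses---so that the partial sums $\sum_{j>k}u_j^p\,\overline u_j^q$ are within $O(\max_{i,a}|u_i^a|)$ of $\tfrac{N-k}{N}\delta_{pq}$ uniformly in $k$. This simultaneously kills the cross-$(l,p)$ covariances (yielding independence of the $\Gamma_{lp}$) and turns the sum over $k$ into a Riemann sum whose limit is the integral that produces $(1-\sigma^2 g_\sigma(z_1)g_\sigma(z_2))^{-1}-1$, i.e.\ $\Pi(z_1,z_2)/(g_\sigma(z_1)g_\sigma(z_2))$. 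Your sketch has no mechanism to control these partial sums, and without it neither the independence claim nor the identification of the limiting variance goes through.
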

\begin{remark}
\label{remark:1vector}
The result of the above theorem about Gaussian fluctuation of $\left(\mb G_N(z)\right)_{lp}$ can be extended to the case when either
$\|\mb u_N^{l}\|_{\infty} \to 0$ or $\|\mb u_N^{p}\|_{\infty} \to 0.$ See Lemma \ref{limits}.
\end{remark}

For the recent results on the fluctuations of the matrix entries of regular functions of Wigner matrices, we refer the reader to
\cite{LP1}, \cite{PRS1}, \cite{ORS}, \cite{LP2}, \cite{BP}, and \cite{L}.
Theorem \ref{fluctuations} was proved in \cite{BP} in the one vector case, assuming 
that the third moment of the matrix entries vanishes.
We outline the proof of Theorem \ref{fluctuations}  
in Section \ref{flucsec}, showing that the arguments of \cite{BP} can be adapted. 

Finally, we extend the result of Theorem \ref{fluctuations} to a sufficiently large class of regular test functions.
Consider the space $\mathcal{H}_s$ consisting of the functions $f: \R \to \R $ that satisfy
\begin{equation}
\label{sobolev}
\|f\|^2_s:=\int_{\R} (1+|k|)^{2s} \* |\hat{f}(k)|^2 \* dk <\infty,
\end{equation}
where $\hat{f}(k)$ is the Fourier transform 
$$\hat{f}(k)=\frac{1}{\sqrt{2\pi}}\* \int_{\R} e^{-ikx}\*f(x)\*dx.$$

\begin{theorem}
\label{regfunctions}
Let $\mb X_N=\frac{1}{\sqrt{N}} \mb W_N$ be a random real symmetric (Hermitian) Wigner matrix
defined in (\ref{offdiagreal}-\ref{diagreal2}) (respectively (\ref{offdiagherm1}-\ref{diagherm})).
Let $\mb u_N^{1}, \ldots, \mb u_N^{m}$  be a sequence of mutually orthogonal $N$ dimensional unit vectors such that $\|\mb u_N^{l}\|_{\infty} \to 0$ 
for $l = 1,\ldots, m$ as $N \to \infty$.  Finally, let $f: \R \to \R $ belong to $\mathcal{H}_s$ for some $s>4.$
Denote
\begin{equation}
\label{Ylp}
Y_{N,lp}(f):=\sqrt{N}\* \left(\langle \mb u_N^{l}, f(\mb X_N) \mb u_N^{p}\rangle - \E \langle \mb u_N^{l}, f(\mb{ \widehat X}_N) \mb u_N^{p}\rangle \right), 
\end{equation}
$1\leq l,p\leq k.$

Then the joint distribution of $\{Y_{N,lp}, \ 1\leq l\leq p\leq m \}, $ converges as $N \to \infty$ to the distribution of independent centered normal 
random variables with the variance
\begin{equation}
\label{variancelp}
\frac{1+\delta_{lp}}{2\*\beta}\* \int_{-2\sigma}^{2\sigma} \int_{-2\sigma}^{2\sigma} (f(x)-f(y))^2 \* \frac{1}{4\pi^2\sigma^4}\* \sqrt{4\sigma^2-x^2}\*
\sqrt{4\sigma^2-y^2}\* dx\*dy,
\end{equation}
where $\beta=1$ in the real symmetric case and $\beta=2$ in the Hermitian case.  

In addition, if $f$ has a sufficiently large number of derivatives ($13$ is enough) and compact support,
\begin{align}
\label{matozh1}
& \E \langle \mb u_N^{l}, f(\widehat{\mb X}_N) \mb u_N^{p}\rangle = \delta_{lp}  \int_{-2\sigma}^{2\sigma} f(x) \* \frac{1}{2\pi\sigma^2} \* \sqrt{4\sigma^2-x^2}\*dx\*
\\
\label{matozh2}
& + N^{-3/2}\* \langle \mb u_N^{l},  \mb M_3 \mb u_N^{p} \rangle  \int_{-2\sigma}^{2\sigma} f(x)\*(2\*x\*\sigma^{-4} + x^3\*\sigma^{-6}) \* \frac{1}{2\pi\sigma^2} \* \sqrt{4\sigma^2-x^2}\*dx \*   
+ O(N^{-1}).
\end{align} 
\end{theorem}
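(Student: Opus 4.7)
The plan is to reduce the statement for $f\in\mathcal{H}_s$, $s>4$, to the resolvent-level theorems above by the Helffer--Sj\"ostrand (HS) almost-analytic functional calculus. Since $s>4$, Sobolev embedding gives $f\in C^{3}(\R)$, so one constructs an almost-analytic extension $\widetilde f$ (with a smooth cutoff in the imaginary direction) satisfying $|\bar{\d}\widetilde f(x+iy)|\leq C|y|^{n}\sum_{k\leq n+1}|f^{(k)}(x)|$, for $n$ as large as the regularity of $f$ allows. This gives the HS representation
\[
f(\mb X_N)=\frac{1}{\pi}\int_{\C}\bar{\d}\widetilde f(z)\,(\mb X_N-z)^{-1}\,dA(z).
\]
Pairing with $\mb u_N^{l},\mb u_N^{p}$, subtracting the $\widehat{\mb X}_N$-expectation, and using the appendix's comparison between $\mb R_N$ and $\widehat{\mb R}_N$ at cost $o(1)$, one obtains
\[
Y_{N,lp}(f)=-\frac{1}{\pi}\int_{\C}\bar{\d}\widetilde f(z)\,(\mb G_N(z))_{lp}\,dA(z)+o(1).
\]

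For the distributional statement, I would combine Theorem \ref{fluctuations} with a tightness/equicontinuity estimate in $z$: the resolvent identity together with standard cumulant or martingale-difference bounds yields $\E|(\mb G_N(z_1))_{lp}-(\mb G_N(z_2))_{lp}|^{2}\leq C|z_1-z_2|^{2}(\Im z_1\,\Im z_2)^{-C'}$. Paired with the $O(|y|^{n})$ decay of $\bar{\d}\widetilde f$, this lets us pass to the limit and identify $Y_{N,lp}(f)$ with the centered Gaussian integral $-\tfrac{1}{\pi}\int\bar{\d}\widetilde f(z)\,\Gamma_{lp}(z)\,dA(z)$. Its covariance is the double integral of the kernels of Theorem \ref{fluctuations} against $\bar{\d}\widetilde f(z_1)\,\bar{\d}\widetilde f(z_2)$. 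Expanding $\Pi(z_1,z_2)=g_\sigma(z_1)g_\sigma(z_2)/(1-\sigma^{2}g_\sigma(z_1)g_\sigma(z_2))-g_\sigma(z_1)g_\sigma(z_2)$ and applying Cauchy--Green to collapse each integral onto the branch cut $[-2\sigma,2\sigma]$, via $\Im g_\sigma(x\pm i0)=\mp\sqrt{4\sigma^{2}-x^{2}}/(2\sigma^{2})$, reduces everything to a double integral over $[-2\sigma,2\sigma]^{2}$. Careful tracking of the four conjugation combinations $(\pm z_1,\pm z_2)$ in the kernels of Theorem \ref{fluctuations} then collapses to the symmetric expression \eqref{variancelp}; the factor $(1+\delta_{lp})/(2\beta)$ emerges naturally from the real/Hermitian dichotomy combined with the diagonal versus off-diagonal cases. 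Independence of different entries follows from the independence already present in the limiting $\Gamma_{lp}$.

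For the expectation expansion \eqref{matozh1}--\eqref{matozh2}, under the additional hypothesis $f\in C_c^{13}$ take $n=11$ in the HS extension and substitute the pointwise expansion of Theorem \ref{centering}. The leading term $g_\sigma(z)\,\delta_{lp}$ reproduces $\delta_{lp}\int f\,d\mu_{sc}$ by the standard Stieltjes-inversion/contour argument. The third-moment correction $\tfrac{1}{N}g_\sigma^{4}(z)\,\langle\mb u_N^{l},\widehat{\mb M}_3\mb u_N^{p}\rangle$ yields \eqref{matozh2} once one identifies $\tfrac{1}{\pi}\int\bar{\d}\widetilde f(z)\,g_\sigma^{4}(z)\,dA(z)=\int f(x)(2x\sigma^{-4}+x^{3}\sigma^{-6})\,d\mu_{sc}(x)$ by a residue/contour computation, reading off the jump of $g_\sigma^{4}$ across the branch cut. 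Remark \ref{zenith} replaces $\widehat{\mb M}_3$ by $\mb M_3$ at cost absorbed in $O(N^{-1})$, and $13$ derivatives of $f$ are needed precisely so that the $P_{12}(|\Im z|^{-1})\,(|z|+1)/\sqrt N$ error in \eqref{limitexp} remains integrable against $\bar{\d}\widetilde f$.

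The main obstacle is the quantitative tightness and equicontinuity of $\mb G_N(z)$ in $z$ with polynomial blow-up as $\Im z\to 0$ compatible with the available $|y|^{n}$ decay of $\bar{\d}\widetilde f$; this requires genuine cumulant-expansion work in the spirit of \cite{BP}, and is essential for commuting the $N\to\infty$ limit with the HS integral. The subsequent reduction of the complex double integrals of $\Pi$ to the real formula \eqref{variancelp} is largely bookkeeping, but the sign-tracking across the four $(\pm z_1,\pm z_2)$ combinations in both the real symmetric ($\rho=1$) and Hermitian ($\rho=0$) cases is where routine calculations can easily go astray.
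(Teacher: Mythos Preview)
Your overall architecture---Helffer--Sj\"ostrand representation, Theorem~\ref{fluctuations} for the resolvent, Theorem~\ref{centering} for the expectation---matches the paper's. The paper, however, organizes the distributional part differently: rather than passing the $N\to\infty$ limit directly through the HS integral via tightness of $\mb G_N(z)$, it runs a \emph{standard approximation argument}. Namely, it first transfers the resolvent variance bound
\[
\var\big(\langle \mb u_N^{l},\mb R_N(z)\mb u_N^{p}\rangle\big)=O\big(P_{8}(|\Im z|^{-1})/N\big)
\]
into a bound $\var\big(\langle \mb u_N^{l},f(\mb X_N)\mb u_N^{p}\rangle\big)\leq C\|f\|^{2}_{*}/N$ for a suitable norm $\|\cdot\|_{*}$ (via HS), and then uses this to control $Y_{N,lp}(f)-Y_{N,lp}(f_\varepsilon)$ for approximants $f_\varepsilon$ on which the CLT is immediate from Theorem~\ref{fluctuations}. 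Your direct-limit approach and the paper's approximation approach are logically close cousins, and both hinge on the same quantitative input.

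The one point where your proposal is thinner than the paper is the threshold $s>4$. With only $f\in\mathcal H_s$, $s>4$, Sobolev embedding gives at best $f\in C^{3}$, so the HS extension satisfies $|\bar\partial\widetilde f(x+iy)|\lesssim |y|^{3}$. A crude variance bound of the form $\E|(\mb G_N(z))_{lp}|^{2}\leq P_{8}(|\Im z|^{-1})$ is then \emph{not} integrable against $|\bar\partial\widetilde f|$ near the real axis, and your equicontinuity estimate with $(\Im z_1\,\Im z_2)^{-C'}$ suffers the same defect. The paper's fix is to sharpen the variance bound to
\[
\var\big(\langle \mb u_N^{l},\mb R_N(z)\mb u_N^{p}\rangle\big)=O\Big(\big(\E\|\mb R_N(z)\|^{2}+\E\|\mb R_N(z)\|^{3/2}\big)\,P_{6}(|\Im z|^{-1})/N\Big),
\]
replacing several powers of $|\Im z|^{-1}$ by $\|\mb R_N(z)\|$. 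Since $\|\mb X_N\|\le 2\sigma+\epsilon$ with high probability, the factor $\E\|\mb R_N(z)\|^{2}$ stays bounded for $z$ near $[-2\sigma,2\sigma]^{c}$ even when $\Im z\to 0$, and Proposition~2.2 of \cite{ORS} (or Proposition~1 of \cite{Shc}) converts this into the desired $\mathcal H_s$-bound for $s>4$. Your sketch flags the obstacle but not this specific mechanism; without it the argument would only go through for somewhat larger $s$.

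For the expectation part your argument is essentially the paper's: Theorem~\ref{centering} through HS, with $13$ derivatives absorbing the $P_{12}(|\Im z|^{-1})(|z|+1)$ remainder.
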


\begin{remark}
One can extend (\ref{matozh1}-\ref{matozh2}) to $f$ satisfying $\|f\|_{13,1,+}<\infty, $ where
\begin{equation}
\label{fn11}
\|f\|_{n,1,+}:=\max \left( \int_{-\infty}^{+\infty} (|x|+1)\*\left|\frac{d^l f}{dx^l}(x)\right| \* dx, \ 0\leq l\leq n\right).
\end{equation}
\end{remark}

\begin{remark}
Under the additional strong technical assumptions that the matrix entries of $ \mb W_N $ are i.i.d. and
$\log \E \exp(t\*(\mb W_N)_{11}) $ and $\E \exp(t\*|(\mb W_N)_{11}|)$ are entire functions, the first part of the result of Theorem \ref{regfunctions}
follows from Theorem 5.1 in \cite{L}.
\end{remark}

In Section 2 we compute the centering for 
$\langle \mb u_N, \mb{ \widehat R}_N(z) \mb v_N \rangle= \mb u_N^{*} \mb{ \widehat R}_N(z) \mb v_N.$ 
The fluctuation results (Theorems \ref{fluctuations}  and \ref{regfunctions})
are established in Section 3. In Section 4 these results are put together to prove Theorem \ref{thm:caseB}. 
Finally Section 5 contains technical arguments for Section 2 as well reductions on the matrix ensemble under consideration.

\section{Expectation of quadratic forms of the resolvent}
\label{sec:exp}
In this section, we prove Theorem \ref{centering}. We will suppress dependence on $N$ and $z$ when possible.

We begin by refining previous estimates on the expectation of the entries of the resolvent. 
In \cite{PRS1} and \cite{ORS}, we proved the following result
\begin{proposition}[Proposition 3.1 \cite{PRS1}]
\label{proposition:prop1}
Let $\mb X_N=\frac{1}{\sqrt{N}} \mb W_N$ be a random real symmetric (Hermitian) Wigner matrix and $\mb R_N(z)=(z\*\mb I_N-\mb X_N)^{-1} $ 
where $z \in \C. $

Then 
\begin{align}
\label{odinnadtsat100}
&  \E R_{ii}(z) =g_{\sigma}(z) + O \left( \frac{1}{|\Im z|^6 \*N}\right),  \\
\label{odinnadtsat101}
& \E R_{ij}(z)=O \left( \frac{1}{|\Im z|^{5} \*N}\right), \  1\leq i\not=j \leq N,\\
\label{odinnadtsat102}
& \V R_{ij}(z) = O \left( \frac{1}{|\Im z|^{6} \*N}\right), \  1\leq i, j\leq N, 
\end{align}
uniformly on bounded subsets of $\C \setminus \R. $

In addition, if 
\[ \sup_{i\not= j,N} \E[ |(\mb W_N)_{ij}|^5 ] \leq \infty,~~~ \sup_{i,N} \E[|(\mb W_N)_{ii}|^3] \leq \infty,  \]
then
\begin{equation}
\label{odinnadtsat103}
\E R_{ij}(z)=O \left( \frac{1}{|\Im z|^{9} \*N^{3/2}}\right), \  1\leq i\not=j \leq N,
\end{equation}
uniformly on bounded subsets of $\C \setminus \R $.
\end{proposition}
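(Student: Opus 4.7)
The plan is to derive all four estimates from the Schur complement formula together with the rank-one resolvent identity, using concentration of quadratic forms to compare against the self-consistent equation for $g_\sigma(z)$. Throughout, let $\mathbf R^{(i)}(z)$ denote the resolvent of $\mathbf X_N$ with the $i$-th row and column removed, and recall $\|\mathbf R^{(i)}(z)\| \le |\Im z|^{-1}$ and $\|\mathbf R^{(i)}(z)\|_{\mathrm{HS}}^2 \le N|\Im z|^{-2}$.

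For the diagonal bound \eqref{odinnadtsat100}, I would apply the Schur complement
\[
R_{ii}(z) = \bigl(z - X_{ii} - Z_i(z)\bigr)^{-1}, \qquad Z_i(z) := \sum_{k,l \ne i} X_{ik}\, R^{(i)}_{kl}(z)\, X_{li}.
\]
Since $\{X_{ik}\}_{k \ne i}$ is independent of $\mathbf R^{(i)}$, one has $\E[Z_i \mid \mathbf R^{(i)}] = (\sigma^2/N)\tr \mathbf R^{(i)}(z)$ and $\V[Z_i \mid \mathbf R^{(i)}] = O(|\Im z|^{-4}/N)$ from the fourth-moment assumption. Combined with a quantitative semicircle-law estimate $(\sigma^2/N)\tr \mathbf R^{(i)}(z) = \sigma^2 g_\sigma(z) + O(|\Im z|^{-C}/N)$ (fed back through the trace version of the Stieltjes identity) and a Neumann-type expansion of $R_{ii}$ around the fixed point $g_\sigma = (z-\sigma^2 g_\sigma)^{-1}$, this yields \eqref{odinnadtsat100}, with the explicit exponent $|\Im z|^{-6}$ coming from tracking worst-case powers through the geometric expansion.

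For the off-diagonal mean \eqref{odinnadtsat101} and variance \eqref{odinnadtsat102}, I would use the companion identity
\[
R_{ij}(z) = -R_{ii}(z)\sum_{k \ne i} X_{ik}\, R^{(i)}_{kj}(z), \qquad i \ne j.
\]
Writing $R_{ii} = g_\sigma + (R_{ii} - g_\sigma)$, the $g_\sigma$-piece contributes zero in expectation by the independence of $X_{ik}$ from $\mathbf R^{(i)}$ together with $\E X_{ik}=0$; the remainder is controlled by Cauchy--Schwarz, using $\E|R_{ii}-g_\sigma|^2 = O(|\Im z|^{-C}/N)$ (from \eqref{odinnadtsat100}) and $\E \bigl|\sum_k X_{ik} R^{(i)}_{kj}\bigr|^2 \le \sigma^2/(N|\Im z|^{2})$. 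The variance bound then follows from a Doob martingale decomposition along the filtration generated by successive rows/columns of $\mathbf W_N$, with each increment controlled by a rank-one resolvent perturbation estimate and $N$ squared increments of size $O(|\Im z|^{-6}/N^2)$ summed.

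The sharper bound \eqref{odinnadtsat103} requires isolating the leading third-cumulant contribution. Expanding $R_{ii}$ one further order in the row entries via the Schur formula and substituting into the identity for $R_{ij}$, every contribution involving $\E X_{ik} = 0$ or $\E[X_{ik}^2 X_{il}] = \E X_{ik}^2 \cdot \E X_{il} = 0$ (for $l \ne k$, by independence) drops out, and the surviving leading term takes the form
\[
g_\sigma^2(z) \cdot \frac{1}{N^{3/2}} \sum_{k \ne i} \mu_{3,ik}\, \E\bigl[R^{(i)}_{kk}(z)\, R^{(i)}_{kj}(z)\bigr],
\]
which is $O(|\Im z|^{-9}/N^{3/2})$ once one uses $R^{(i)}_{kk} \approx g_\sigma$ and $\E R^{(i)}_{kj} = O(1/N)$ from iterating \eqref{odinnadtsat101} one index level down. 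The fifth-moment hypothesis is invoked both to guarantee uniform finiteness of $\mu_{3,ik}$ and to justify convergence of the Taylor remainder through the truncation step. I expect the main obstacle to be the bookkeeping: one must track powers of $|\Im z|^{-1}$ through every resolvent perturbation and concentration step, verify that no intermediate quantity blows up faster than advertised, and confirm that the combinatorial cancellations of odd-index sums persist at each order of the expansion so that $\mu_3$ is genuinely the leading non-vanishing correction.
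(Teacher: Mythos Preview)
This proposition is not proved in the present paper; it is quoted from \cite{PRS1}, and the paper only proves the refinement Proposition~\ref{prop:expij3}. That refinement, and (as the paper states) the original argument in \cite{PRS1}, proceed by the resolvent identity $z R_{ij}=\delta_{ij}+\sum_k X_{ik}R_{kj}$ together with the decoupling/cumulant formula \eqref{decouple}. This produces a self-consistent equation of the form
\[
\bigl(z-\sigma^2\,\E[\tr_N \mb R_N]\bigr)\,\E[R_{ij}]=\delta_{ij}+\text{(third and higher cumulant terms)}+\text{(covariance corrections)},
\]
which one then inverts using $|z-\sigma^2\,\E[\tr_N \mb R_N]|^{-1}\le |\Im z|^{-1}$ and the bound on $\V(\tr_N \mb R_N)$.

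Your route via the Schur complement and the minor resolvents $\mb R^{(i)}$ is a genuinely different, and equally valid, strategy; it is closer in spirit to Bai--Silverstein than to Khorunzhy--Khoruzhenko--Pastur. What it buys you is transparent independence: once the $i$-th row is peeled off, averages over $X_{ik}$ factor cleanly, and the third-moment contribution in \eqref{odinnadtsat103} emerges from the triple coincidence in $\E[Z_i S_i]$ rather than from a cumulant bookkeeping rule. What the decoupling approach buys is mechanical organization of the moment hierarchy: each cumulant order appears with an explicit derivative count, so the powers of $|\Im z|^{-1}$ and of $N^{-1/2}$ are read off directly, and the self-consistent equation avoids having to iterate estimates between $\mb R$ and $\mb R^{(i)}$. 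One small correction: the finiteness of $\mu_{3,ik}$ already follows from the fourth-moment hypothesis via H\"older; the fifth moment is needed only to control the Taylor remainder (in the decoupling language, the $p=3$ error term in \eqref{decouple}), which in your scheme corresponds to the five-fold coincidences in $\E[Z_i^2 S_i]$ and the higher-order pieces of the geometric expansion of $R_{ii}$.
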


Using Lemma \ref{diagrem} and Section 8.3 of \cite{BP} we set the diagonal entries of $\mb W_N$ to $0$ and truncate the off-diagonal elements at 
$N^{1/4} \epsilon _N$ for some $\epsilon_N \to 0$ as $N \to \infty$ without changing the limiting distribution of $\sqrt{N} \mb u_N^{*} \mb R_N(z) \mb v_N$. We note that \cite{BP} concerns the i.i.d. case but our assumption \eqref{lind2} is 
the averaged version of (8.18) from \cite{BP} and is sufficient for the proof. We also note that the truncation will change the second and third moments 
of the entries of $\mb W_N$ but this error is neglectable and will not be mentioned again.
The estimate \eqref{odinnadtsat103} is improved in the following theorem:
\begin{proposition}
\label{prop:expij3}
Let $\mb X_N=\frac{1}{\sqrt{N}} \mb W_N$ be a random real symmetric (Hermitian) Wigner matrix defined in (\ref{offdiagreal}-\ref{diagreal2}) (respectively (\ref{offdiagherm1}-\ref{diagherm})), and $(\widehat{\mb W}_N)_{ij} = (\mb W_N)_{ij} (1-\delta_{ij})\indicator{(\mb W_N)_{ij}\leq N^{1/4}} -\E[ (\mb W_N)_{ij} (1-\delta_{ij}) \indicator{(\mb W_N)_{ij}\leq N^{1/4}}] $  and $\mb{ \widehat{R}}_N(z)=(z\*\mb I_N-\widehat{\mb X}_N )^{-1} $ where $z \in \C. $ Then for $i \not =j$:
\begin{equation}
\label{expij3}
 \E[ \widehat{R}_{ij}(z)] = \frac{\widehat \mu_{3,ij}}{N^{3/2}} g_\sigma^4(z) +O\left( \frac{P_{12}(|\Im(z)|^{-1})}{N^{2}} \right)
 \end{equation}
uniformly on bounded subsets of $\C \setminus \R$. Where $\widehat \mu_{3,ij} = \E[|\widehat  W_{ij}|^2 \widehat W_{ij}]$.
\end{proposition}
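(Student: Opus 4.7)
The plan is to refine the $O(N^{-3/2})$ estimate of \eqref{odinnadtsat103} by extracting its leading coefficient explicitly, via a cumulant (Stein) expansion applied to a resolvent identity and followed by inversion of the semicircle self-consistent relation $z-\sigma^2 g_\sigma(z)=1/g_\sigma(z)$.

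First, write the identity $(zI-\widehat{\mb X}_N)\mb{\widehat R}_N=I$ entrywise to get, for $i\ne j$,
\[ z\,\E[\widehat R_{ij}] = \frac{1}{\sqrt N}\sum_{k\ne i}\E[\widehat W_{ik}\,\widehat R_{kj}] . \]
Since $\widehat W_{ik}$ is bounded by $2N^{1/4}$ after truncation, all cumulants of $\widehat W_{ik}$ exist and satisfy $\kappa_2(\widehat W_{ik})=\sigma^2+o(1)$, $\kappa_3(\widehat W_{ik})=\widehat\mu_{3,ik}+o(1)$, and $|\kappa_p(\widehat W_{ik})|=O(N^{(p-4)/4})$ for $p\ge 4$. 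Apply the generalized Stein identity
\[ \E[\widehat W_{ik}F(\widehat W)] = \sum_{p=1}^{3}\frac{\kappa_{p+1}(\widehat W_{ik})}{p!}\,\E[\partial^{p}_{ik}F] + \varepsilon_{ik}(F) \]
with $F=\widehat R_{kj}$, where $\partial_{ik}$ differentiates with respect to the single symmetric variable $\widehat W_{ik}$. The basic derivative formula $\partial_{ik}\widehat R_{kl}=\frac{1}{\sqrt N}(\widehat R_{ki}\widehat R_{kl}+\widehat R_{kk}\widehat R_{il})$ and its iterate furnish closed expressions for $\partial^{p}_{ik}\widehat R_{kj}$, $p=1,2,3$, as sums of products of three (respectively four, five) resolvent entries times $N^{-p/2}$.

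The next step is term-by-term analysis. The first-cumulant term produces $\frac{\sigma^2}{N}\sum_{k\ne i}\E[\widehat R_{ki}\widehat R_{kj}+\widehat R_{kk}\widehat R_{ij}]$; using Proposition \ref{proposition:prop1} and the Ward identity $\sum_k|\widehat R_{ik}|^2=|\Im z|^{-1}\Im\widehat R_{ii}$ to control the first sum, plus the diagonal concentration $\widehat R_{kk}=g_\sigma(z)+O(|\Im z|^{-3}N^{-1/2})$, this term equals $\sigma^2 g_\sigma(z)\,\E[\widehat R_{ij}]$ up to $O(P_{12}(|\Im z|^{-1})/N^2)$. The second-cumulant (third-moment) term, written out, becomes $\frac{1}{2N^{3/2}}\sum_{k\ne i}\widehat\mu_{3,ik}\,\E[2\widehat R_{ki}^{2}\widehat R_{kj}+2\widehat R_{kk}\widehat R_{ii}\widehat R_{kj}+4\widehat R_{ik}\widehat R_{kk}\widehat R_{ij}]$. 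Only the summand $k=j$ survives at leading order in $N$: for $k\ne i,j$ each product contains at least two off-diagonal resolvent entries whose expectation (or variance) is $O(|\Im z|^{-c}N^{-1})$ by \eqref{odinnadtsat101}--\eqref{odinnadtsat102}. The $k=j$ contribution evaluates, again using diagonal concentration and the $O(N^{-1/2})$ variance of the off-diagonal resolvent entries, to $\widehat\mu_{3,ij}\,g_\sigma^{3}(z)/N^{3/2}$ modulo the stated error. Finally, the $\kappa_4$ term is bounded by $O(N^{-2})$ because the truncation bound gains a factor of $N^{-1/2}$ over the $\kappa_3$ term while the fourth cumulant stays $O(1)$; the remainder $\varepsilon_{ik}$ is dispatched by the standard $L^\infty$ estimate $\|\widehat W_{ik}\|_\infty\,\|\partial^{4}_{ik}\widehat R_{kj}\|_\infty$ together with $\|\mb{\widehat R}_N(z)\|\le |\Im z|^{-1}$.

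Combining everything produces the self-consistent equation
\[ (z-\sigma^{2}g_\sigma(z))\,\E[\widehat R_{ij}(z)] = \frac{\widehat\mu_{3,ij}}{N^{3/2}}\,g_\sigma^{3}(z) + O\!\left(\frac{P_{12}(|\Im z|^{-1})}{N^{2}}\right), \]
and multiplication by $g_\sigma(z)=(z-\sigma^{2}g_\sigma(z))^{-1}$ yields \eqref{expij3}. The main technical obstacle is the bookkeeping of the polynomial weight $P_{12}(|\Im z|^{-1})$: each resolvent entry costs a power of $|\Im z|^{-1}$, each variance estimate from Proposition \ref{proposition:prop1} costs several more, and one must ensure the Ward-identity device is used whenever a naive sum $\sum_k$ would otherwise lose a factor of $N$. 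The arithmetic of powers of $|\Im z|^{-1}$ is the only genuinely delicate part; the algebraic structure is forced by the cumulant expansion.
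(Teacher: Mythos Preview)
Your overall architecture---resolvent identity, cumulant (Stein) expansion, and inversion of the self-consistent relation $z-\sigma^2 g_\sigma(z)=1/g_\sigma(z)$---matches the paper's. The gap is in the error control: as written, your bounds on the second- and third-cumulant contributions only reach $O(N^{-1})$, not the $O(N^{-2})$ you need.

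For the $\kappa_2$ term $\frac{\sigma^2}{N}\sum_k \E[\widehat R_{ki}\widehat R_{kj}+\widehat R_{kk}\widehat R_{ij}]$: the Ward identity bounds $|(\widehat{\mathbf R}_N^2)_{ij}|$ by $|\Im z|^{-2}$, so $\frac{\sigma^2}{N}\E[(\widehat{\mathbf R}_N^2)_{ij}]$ is only $O(N^{-1})$ by that route. Likewise, writing $\widehat R_{kk}=g_\sigma(z)+O(N^{-1/2})$ and using $\E|\widehat R_{ij}|=O(N^{-1/2})$ gives a correction $\frac{1}{N}\sum_k O(N^{-1})=O(N^{-1})$, not $O(N^{-2})$. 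The paper closes this gap by (i) citing Theorem~3.2(ii) of \cite{PRS1} for $\frac{1}{N}\E[(\widehat{\mathbf R}_N^2)_{ij}]=O(N^{-2})$, and (ii) invoking Lemma~9.2 of \cite{CDFF} to decouple $\E[\widehat R_{ij}\,\mathrm{tr}_N(\widehat{\mathbf R}_N)]$ from $\E[\widehat R_{ij}]\,\E[\mathrm{tr}_N(\widehat{\mathbf R}_N)]$ with error $O(N^{-2})$; both require a \emph{second} application of the resolvent identity and decoupling formula to the offending factor.

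For the $\kappa_3$ term, your assertion that ``for $k\ne i,j$ each product contains at least two off-diagonal resolvent entries'' is false: the summand $\widehat R_{kk}\widehat R_{ii}\widehat R_{kj}$ has only one off-diagonal factor. Even for the summands with two off-diagonal factors, Cauchy--Schwarz plus \eqref{odinnadtsat102} gives at best $O(N^{-1})$ per term, and the sum over $k$ does not improve this to $O(N^{-2})$. The paper's device is Lemma~\ref{3term}: one applies the resolvent identity and decoupling formula \emph{again} to, e.g., $\sum_k\E[\widehat R_{kj}\widehat R_{ii}\widehat R_{kk}]$, and shows it equals $\sum_k\E[\widehat R_{kj}]\,\E[\widehat R_{ii}\widehat R_{kk}]+O(N^{-1/2})$. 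Only after this decoupling can one invoke the sharp bound $\E[\widehat R_{kj}]=O(N^{-3/2})$ from \eqref{odinnadtsat103} to isolate the $k=j$ contribution with error $O(N^{-2})$. The same second-layer decoupling is needed for the dangerous $\kappa_4$ summand $\widehat R_{ii}\widehat R_{kk}^2\widehat R_{ij}$. In short, the missing idea is that a single pass of the cumulant expansion is not enough; the subleading terms must themselves be expanded once more.
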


Equation \eqref{expij3} establishes Theorem \ref{centering} by computing $\E[\mb u_N^* \mb{  \widehat{R} }_N(z) \mb v_N]$ for unit vectors $\mb u_N$ and $\mb v_N$.

The proof in the real case is below, the complex case follows similarly.

\begin{proof}
For notational convenience we set $\mb X_N= \mb{ \widehat{ X}}_N$ and    $\mb R_N(z) = \mb{ \widehat{ R}}_N(z)$.
The proof of this theorem is similar to the proof of equation \eqref{odinnadtsat103} in \cite{PRS1} but also uses the estimates established in Proposition \ref{proposition:prop1} of \cite{PRS1} to refine the error terms. The following estimates will be useful:
\begin{equation}
\label{solnce}
\sum_k |R_{ik}|^2 \leq \|\mb R_N \|^2 \leq \frac{1}{|\Im z|^2}, \sum_k |R_{ik}| \leq \frac{\sqrt{N}}{|\Im z|}, \ \text{and} \ |R_{pq}| \leq \frac{1}{|\Im z|},
\end{equation}

In dealing with resolvents, we will use the resolvent identity
\begin{equation}
\label{resident}
	(z I - A_2)^{-1} = (z I - A_1)^{-1} - (z I - A_1)^{-1} (A_1 -A_2) (zI - A_2)^{-1} 
\end{equation}
which holds for all $z \in \C$ where $(z I - A_1)$ and $(z I - A_2)$ are invertible.  

In addition, we will use
the decoupling formula (see for example \cite{KKP}): for any real-valued random variable, $\xi$, with $p+2$ finite moments and $\phi$ a 
complex-valued function with $p+1$ continuous and bounded derivatives
\begin{equation}
\label{decouple}
	\E(\xi \varphi(\xi)) = \sum_{a=0}^p \frac{\kappa_{a+1}}{a!} \E(\varphi^{(a)}(\xi)) + \epsilon 
\end{equation}
where $\kappa_a$ are the cumulants of $\xi$ and $\epsilon \leq C \sup_t | \varphi^{(p+1)}(t)| \E(|\xi|^{p+2})$, $C$ depends only on $p$. We will use the notation $\kappa_{a,ij} $ for the $a^{th}$ cumulant of the $(ij)-{th}$ entry of $\mb W_N$.

The resolvent identity \eqref{resident}, $R_{12} = z^{-1} \sum_k X_{1k}R_{k2}$, and decoupling formula \eqref{decouple} give
\begin{align}
\label{master}
z \E[R_{12}] &= \sigma^2 \E[R_{12} \tr_N(\mb R_N) ] + \frac{\sigma^2}{N} \E[ (\mb R_N^2)_{12} ] \\
&+ \frac{1}{2 N^{3/2}}\sum_k \kappa_{3,1k} \left(4 \E[ R_{12} R_{1k} R_{kk}] + 2\E[R_{11} R_{kk} R_{k2}] + 2\E[R_{1k}^2  R_{2k}] \right)+ r_N \nonumber
\end{align}
Where $r_N$ contains the $3\leq a \leq 6$ terms from the decoupling formula \eqref{decouple} and the error from truncating at $p=6$ in the decoupling formula. Additionally, since we have set $X_{ii} =0$ for $1\leq i \leq N$ we need to add and subtract $\frac{2 \sigma^2}{N} \E[R_{11} R_{12}]$. To ease notation, we also add and subtract $\frac{2^{2a-1} \kappa_{a+1,12} }{N^{(a+1)/2} } \E[R_{12} R_{11}^{a}]$. This term allows future summations to be over all $k$. Using \eqref{odinnadtsat102}, \eqref{solnce} and the Cauchy-Schwarz inequality we see these additionally added terms are $O(N^{-2})$. In the future, when using the decoupling formula we will not mention the error from the diagonal terms.

To estimate the error terms we use Lemma 9.2 of \cite{CDFF}, which states:
\[ \E[R_{12}\tr_{N}(\mb R_N)] - \E[R_{12}] \E[\tr_{N}(\mb R_N)]  = O \left(\frac{P_8(|\Im(z)|^{-1})}{N^2}\right).\]

Since the assumptions in Lemma 9.2 are more restrictive than this paper a few words are needed to justify use of this lemma. Its proof follows by applying the resolvent identity, \eqref{resident}, to $R_{12}$ and then apply the decoupling formula, \eqref{decouple}, to $\sum_k \E[R_{1k} X_{k2}] \E[ \tr_N(\mb R_N(z)]$ and to $\sum_k \E[R_{1k} X_{k2} \tr_N(\mb R_N(z)]$. Then $\E[R_{12}\tr_{N}(\mb R_N)] - \E[R_{12}] \E[\tr_{N}(\mb R_N)]$ is estimated in a similar manner to the proof of Lemma \ref{3term}.

Lemma 9.2 assumes the distribution of the matrix entries satisfy a the Poincar\'e inequality \eqref{poin} and are symmetric. The Poincar\'e inequality assumption gives a bound on the variance of the trace and also implies that all moments of the distribution of the matrix entries are finite.
To bound the variance of the trace we can instead use Lemma 2 of \cite{Shc} which states
\[ \V(\tr_N(\mb R_N(z))) = O \left( \frac{P_4(|\Im(z)|^{-1})}{N^2} \right).\]
Even though the result of Lemma 2 in \cite{Shc} was stated under the assumption that both the off-diagonal and the diagonal matrix entries
have finite fourth moment, the actual proof holds under the assumptions (\ref{offdiagreal}-\ref{diagreal2}) in the real symmetric case
and (\ref{offdiagherm1}-\ref{diagherm}) in the Hermitian case \cite{Sh}. 

Because the entries of $\mb W_N$ have been truncated the $a^{th}$ moment for $a>4$ grows no faster than $N^{(a-4)/4}$, this bound combined with estimates \eqref{odinnadtsat101} and \eqref{odinnadtsat102} allow the higher order error terms in the decoupling formula to be bounded. The proof is lengthy but straight forward and very similar to proof of Lemma \ref{3term}.

Since we do not assume the distribution is symmetric, the third moment might not vanish, but the third cumulant terms in the decoupling formula can be estimated exactly the same as the fourth cumulant terms in \cite{CDFF}, after noting that the summation over $k$ will contribute at most a factor of $N^{1/2}$ instead of $N$ as in the fourth cumulant term.


Furthermore, part (ii) of the Theorem 3.2 in \cite{PRS1} with $f(x) = 1/(z-x)^2$ implies
\[\frac{\sigma^2}{N} \E[ (\mb R_N(z)^2)_{12} ] = O \left(\frac{P_8(|\Im(z)|^{-1})}{N^2}\right)  \]
The following lemma will help bound the third cumulant terms of \eqref{master}. It is 
an improvement of \cite{PRS}, where the Cauchy-Schwarz inequality was used to give an error bound $O(N^{-3/2})$.

\begin{lemma}
\label{3term}
\begin{align}
\label{3termeq}
 &\frac{1}{ N^{3/2}} \E[\sum_k \kappa_{3,1k} (R_{12} R_{1k} R_{kk}+ R_{k2} R_{11} R_{kk} +R_{2k} R_{1k}^2)] \\
 &= \frac{1}{ N^{3/2}}  \sum_k \kappa_{3,1k}( \E[ R_{12}]\E[ R_{1k} R_{kk}] +  \E[ R_{k2}]  \E[ R_{11} R_{kk}] +\E[ R_{2k}] \E[ R_{1k}^2] ) + 
O\left( \frac{P_9(|\Im(z)|^{-1})}{N^2} \right), \nonumber
\end{align}
where $\kappa_{3,ij}$  denotes the third cumulant of the $(ij)$-th entry of $\mb W_N.$
\end{lemma}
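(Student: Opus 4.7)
The plan is to establish each of the three near-decouplings---$\E[R_{12}R_{1k}R_{kk}]\approx\E[R_{12}]\E[R_{1k}R_{kk}]$, $\E[R_{k2}R_{11}R_{kk}]\approx\E[R_{k2}]\E[R_{11}R_{kk}]$, and $\E[R_{2k}R_{1k}^2]\approx\E[R_{2k}]\E[R_{1k}^2]$---so that the total error, after weighting by $\kappa_{3,1k}/N^{3/2}$ and summing over $k$, is $O(P_9(|\Im z|^{-1})/N^2)$. The strategy parallels the derivation of \eqref{master}: apply the resolvent identity $zR_{ab}=\delta_{ab}+\sum_j X_{aj}R_{jb}$ to the off-diagonal factor $R_{12}$ in the first two terms, or to one copy of $R_{1k}$ in the third, exposing an $X_{1j}$ (or $X_{k\ell}$) variable to which we apply the decoupling formula \eqref{decouple} up to order $p=5$ or $6$.

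\textbf{Step 1 (paired expansion).} Expand both $\E[R_{12}R_{1k}R_{kk}]$ and $\E[R_{12}]\E[R_{1k}R_{kk}]$ simultaneously via $zR_{12}=\sum_j X_{1j}R_{j2}$ and \eqref{decouple}. The second-cumulant contribution equals $(\sigma^2/N)\sum_j\E[\partial_{X_{1j}}(\cdot)]$, and the identities $\partial_{X_{ab}}R_{cd}=-R_{ca}R_{bd}-R_{cb}R_{ad}$ produce products of $4$--$5$ resolvent entries. After collecting like terms, the leading $O(1/N)$ pieces on the two sides match once $\tr_N\mb R_N$ is replaced by its deterministic limit $g_\sigma(z)$; this replacement is legitimate because Lemma 2 of \cite{Shc} gives $\V(\tr_N\mb R_N)=O(P_4(|\Im z|^{-1})/N^2)$.

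\textbf{Step 2 (covariance and higher-cumulant residues).} The residue of the matching is a sum of genuine covariances $\E[R_{ab}C]-\E[R_{ab}]\E[C]$, each controlled by $\V(R_{ab})^{1/2}\V(C)^{1/2}=O(P_6(|\Im z|^{-1})/N)$ via Proposition \ref{proposition:prop1}, together with contributions from cumulants of order $\geq 3$ and the remainder from truncating \eqref{decouple} at $p$. The truncation moment bound $\E|\widehat W_{1j}|^a\lesssim N^{(a-4)/4}$ for $a\geq 4$ keeps the higher-order cumulant contributions and the tail remainder within the stated order.

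\textbf{Step 3 (summation over $k$: the main obstacle).} A naive Cauchy--Schwarz bound on each covariance yields only $O(N^{-3/2})$ after summing $\kappa_{3,1k}/N^{3/2}$ over $k$, which is the estimate already obtained in \cite{PRS}. To save the extra factor $N^{-1/2}$, one must keep the $k$-dependent resolvent entries \emph{inside} the sum and exploit $|\sum_k R_{ak}R_{bk}|\leq (\sum_k|R_{ak}|^2)^{1/2}(\sum_k|R_{bk}|^2)^{1/2}\leq|\Im z|^{-2}$, which turns the apparent $\sum_k 1=N$ into $O(1)$ whenever the lingering $k$-index sits inside a pair $R_{\bullet k}R_{\bullet k}$. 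Thus every term generated in Steps 1--2 must be reorganized into this pair form; stray single factors such as $R_{kk}-g_\sigma(z)$ are first replaced by $g_\sigma(z)$ at the cost of a variance error already bounded in Step 2. This careful bookkeeping---not any deeper analytic input---is what makes the proof lengthy; the ingredients are exactly Proposition \ref{proposition:prop1}, Lemma 2 of \cite{Shc}, the decoupling formula \eqref{decouple}, and the truncation moment bounds. The remaining two terms of \eqref{3termeq} are treated identically, applying the resolvent identity to $R_{k2}$ and to one factor of $R_{1k}$ respectively.
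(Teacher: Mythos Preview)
Your overall scheme---expand both $\sum_k\E[R_{12}R_{1k}R_{kk}]$ and $\sum_k\E[R_{12}]\E[R_{1k}R_{kk}]$ via the resolvent identity and the decoupling formula, then take the difference---is exactly what the paper does, and the ingredients you list (Proposition~\ref{proposition:prop1}, Lemma~2 of \cite{Shc}, the decoupling formula, and the truncation moment bound) are the right ones.

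The gap is in Step~3. You attribute the extra $N^{-1/2}$ saving to the row--sum identity $|\sum_k R_{ak}R_{bk}|\le|\Im z|^{-2}$, and propose to reorganize every residual term into such pairs. That mechanism does control the \emph{subsidiary} second--cumulant terms (e.g.\ $\frac{\sigma^2}{N}\sum_k\E[R_{11}R_{kk}(\mb R_N^2)_{2k}]$) and the higher--cumulant terms, but it is not what produces the main gain, and it does not apply cleanly to the leading residue: in $R_{1k}R_{kk}$ one of the two ``$\bullet$''s is $k$ itself, so $\sum_k|R_{1k}R_{kk}|$ is only $O(\sqrt{N}\,|\Im z|^{-2})$, not $O(1)$.

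The actual saving comes from the \emph{self-consistent equation} that your Step~1 sets up but does not make explicit. Writing $D=\sum_k\bigl(\E[R_{12}R_{1k}R_{kk}]-\E[R_{12}]\E[R_{1k}R_{kk}]\bigr)$, the paired expansion gives
\[
(z-\sigma^2\E[\tr_N\mb R_N])\,D \;=\; \sigma^2\sum_k\E\bigl[(\tr_N\mb R_N-\E\tr_N\mb R_N)\,R_{1k}R_{kk}\,(R_{12}-\E R_{12})\bigr]\;+\;O\!\left(\frac{P_8(|\Im z|^{-1})}{\sqrt N}\right),
\]
and the right-hand sum is bounded term-by-term by $|\Im z|^{-2}\sqrt{\V(\tr_N\mb R_N)}\sqrt{\V(R_{12})}=O(N^{-3/2})$, hence $O(N^{-1/2})$ after summing over $k$. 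Dividing by $z-\sigma^2\E[\tr_N\mb R_N]$ (whose reciprocal is $\le|\Im z|^{-1}$) gives $|D|=O(P_9(|\Im z|^{-1})/\sqrt N)$. The crucial input here is the \emph{double} fluctuation: both $\tr_N\mb R_N-\E\tr_N\mb R_N$ and $R_{12}-\E R_{12}$ are centered, and Cauchy--Schwarz against these two factors (with $\V(\tr_N\mb R_N)=O(N^{-2})$ from \cite{Shc}) is what buys the extra $N^{-1/2}$, not a row--sum reorganization. Your row--sum trick enters only in controlling the $O(P_8/\sqrt N)$ remainder. The same self-consistent scheme, applied to $R_{k2}$ for the second term and to one factor of $R_{1k}$ for the third, handles the remaining pieces of \eqref{3termeq}.
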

The proof is in the appendix.
Applying \eqref{odinnadtsat103} to the right side of the equation in Lemma \ref{3term} and using uniform boundedness of the $\kappa_{3,1k}$'s we obtain

\begin{align*}
&\frac{1}{2 N^{3/2}}\sum_k \kappa_{3,1k} \left(4 \E[ R_{12} R_{1k} R_{kk}] + 2\E[R_{11} R_{kk} R_{k2}] + 2\E[R_{1k}^2  R_{2k}] \right) \\
&= \frac{\kappa_{3,12} }{ N^{3/2}} \E[ R_{22}] \E[ R_{11} R_{22}] + O\left( \frac{P_{11}(|\Im(z)|^{-1})}{N^2} \right).
\end{align*}
This concludes the estimates for the third cumulant terms of \eqref{master}.
The fourth cumulant terms in \eqref{master} are:
\[ \frac{1}{3! N^2}\E\left[\sum_{k} \kappa_{4,1k}( 18R_{11} R_{1k} R_{kk} R_{k2} + 6 R_{11}(R_{kk})^2 R_{12} + 18 (R_{1k})^2 R_{kk} R_{12} + 
6 (R_{1k})^3 R_{k2}) \right] \]
Each of these terms except the second one are shown to be $O(P_4(|\Im(z)|^{-1}) N^{-2})$ in \cite{PRS}. The arguments in Lemma \ref{3term} and estimate 
\eqref{odinnadtsat101} gives the bound
\begin{align*}
\frac{1}{N^2}\sum_{k} \E[  R_{11}(R_{kk})^2 R_{12}] &=\frac{1}{N^2} \sum_{k}\E[  R_{11}(R_{kk})^2] \E[ R_{12}] + 
O\left(\frac{P_{10}(|\Im(z)|^{-1} )}{N^{2} }\right) \\
&= O\left(\frac{P_{10}(|\Im(z)|^{-1} )}{N^{2} }\right)
\end{align*}

Similarly, using that $\E[|W_{ij}|^{4+a}] \leq N^{a/4} \E[|W_{ij}|^4]$ and a lengthy but straightforward calculation the fifth cumulant terms are bounded by 
$O\left(\frac{P_{11}(|\Im(z)|^{-1} )}{N^{9/4} }\right)$. 
 The sixth cumulant terms can be bounded using the Cauchy-Schwarz inequality. For example:
\begin{align*}
\frac{N^{1/2}}{N^{3}} \sum_k \E[R_{kk}^3 R_{11}^2 R_{12}] &= \frac{N^{1/2}}{N^{3}} 
\sum_k \E[R_{kk}^3 R_{11}^2] \E[ R_{12}] + O\left(\frac{|\Im(z)|^{-8}}{N^2} \right) \\
&= O\left(\frac{P_{10}(|\Im(z)|^{-1})}{N^2} \right)
\end{align*}

Finally, the seventh cumulant term is $O (P_7(|\Im(z)|^{-1}) N^{-9/4})$ and the truncation term is the sum of $N$ terms each bounded by 
$O(P_8|\Im(z)|^{-1}) N^{-3}).$

Now we continue the study of the leading order terms of Eq. \eqref{master}.
\begin{align*}
z \E[R_{12}(z)] = \sigma^2 \E[R_{12}(z)] \E[ \tr_N(\mb R_N(z))] + \frac{\kappa_{3,12}}{ N^{3/2}} \E[R_{22}] \E[R_{11} R_{22}] ] + 
O\left( \frac{P_{11}(|\Im(z)|^{-1})}{N^{2}} \right)
\end{align*}

Then by the Cauchy-Schwarz inequality and \eqref{odinnadtsat102}
\begin{align*}
 \frac{\kappa_{3,12}}{ N^{3/2}} \E[ R_{11} R_{22}] &= \frac{\kappa_{3,12}}{ N^{3/2}} \E[ R_{11}]  \E[R_{22}] + 
O\left(\frac{P_6(|\Im(z)|^{-1})}{N^{5/2}} \right) 
 =\frac{\kappa_{3,12}}{ N^{3/2}}\E[ \tr_N(\mb R_N(z))]^2 + O\left(\frac{P_6(|\Im(z)|^{-1})}{N^{5/2}} \right)  \\
\end{align*}

Therefore,
\begin{align*}
\left(z - \sigma^2  \E[ \tr_N(\mb R_N(z))]  \right) \E[R_{12}(z)] =  \frac{\kappa_{3,12}}{ N^{3/2}} \E[ \tr_N(\mb R_N(z))]^3 + 
O\left( \frac{P_{11}(|\Im(z)|^{-1})}{N^{2}} \right)
\end{align*}
This implies (see e.g. \cite{PRS})  that
\begin{align*}
\E[R_{12}(z)] =  \frac{\kappa_{3,12}}{ N^{3/2}} g_\sigma^{4}(z) +O\left( \frac{P_{12}(|\Im(z)|^{-1})}{N^{2}} \right)
\end{align*}
The proof is completed.
\end{proof}

Now we use Proposition \ref{prop:expij3} to prove Theorem \ref{centering}.

\begin{proof}[Proof of Theorem \ref{centering}]
Let $\mb u_N$ and $\mb v_N$ be unit vectors in $\C^N$. Using \eqref{odinnadtsat100} and \eqref{expij3},

\begin{align*}
\E[\mb u_N^{*} \mb R_N(z) \mb v_N] &=  \sum_i \E[R_{ii}(z)] \overline u_i v_i + \sum_{\substack{i,j=1 \\i\not=j}}^N   
\E[R_{ij}(z)]\overline u_i  v_j\\
&=g_\sigma(z) \delta_{lp}  + \frac{g^4_\sigma(z)}{N^{3/2}} \mb u_N^{*} \mb M_3(z) \mb v_N + O(P_{12}(|\Im(z)^{-1}) N^{-1})
\end{align*}

Furthermore, 
if $\|\mb u_N\|_1 = o(\sqrt{N})$ or $\|\mb v_N\|_1 = o(\sqrt{N})$ then
\begin{align*}
\left|\frac{1}{N^{3/2}} \mb u_N^{*} \mb M_3(z) \mb v_N \right| &\leq \frac{\max_{i\neq j} |\mu_{3,ij}|}{N^{3/2}} 
\sum_{i \not= j} |\overline u_{i}  v_{j} |
\leq \frac{\max_{i\not=j} |\mu_{3,ij}| \|\mb u_N\|_1\|\mb v_N\|_1}{N^{3/2}}=o(N^{-1/2})
\end{align*}
\end{proof}

\begin{remark}
If for all $1\leq i,j \leq N$, $\mu_{3,ij} = \mu_3 \in \R$ then 
\begin{align*}
\frac{1}{N^{3/2}} \mb u_N^{*} \mb M_3(z) \mb v_N &= \frac{\mu_3}{N^{3/2}} \left( \sum_{i=1}^{N} \overline u_i \sum_{j=1}^N  v_j -  
\sum_i \overline u_i  v_i  \right) \\
&= \frac{\mu_3}{N^{1/2}} (\mb u_N^{*} \mathbf{1}_N )(\mathbf{1}_N^* \mb v_N) - o(N^{-1/2})
\end{align*}
Where $\mathbf{1}_N = \frac{1}{\sqrt{N}}(1,\ldots, 1)$.
\end{remark}


\section{Fluctuations of bilinear forms}
\label{flucsec}
This section is devoted to the proofs of Theorems \ref{fluctuations}  and \ref{regfunctions}. As in Section 2, for notation convenience we set $\mb X_N= \mb{ \widehat{ X}}_N$ and    $\mb R_N(z) = \mb{ \widehat{ R}}_N(z)$.
Recall that Proposition \ref{Lemma13} implies that Theorem \ref{fluctuations} gives the fluctuations of the outlying eigenvalues. Our argument 
follows the proof of Theorem 3.1 of \cite{BP}. We outline the proof, showing that the theorem holds in the more general case. 

In order to compute the limiting distribution of quadratic forms it is useful to represent sums as Riemann sums of certain 
integrals. In order to effectively make this representation it is useful to permute the elements of the unit vectors. Permuting the entries of unit vectors is equivalent to conjugating our Wigner matrix by a permutation matrix. The matrix obtained from $\mb W_N$ by conjugation by permutation matrices is again a Wigner random matrix satisfying the same hypotheses as $\mb W_N.$
In order to show a good permutation of the unit vectors exist we show that Lemma 3.1 of \cite{BP} can be extended to the case of multiple 
bilinear forms (see Corollary \ref{SteinitzC}).

\begin{theorem}[\cite{Steinitz} see Theorem 1 of \cite{GS}]
\label{Steinitz}
Let $
\{ v_i \}_{i=1}^N$ be a finite family of vectors in $\R^m$ of size $N$. The elements of $v_i$ are denoted $v_{i}^l$ for $1 \leq l \leq m$. Assume 
that $v_i^l \leq c$ for all $i,l$ and $\sum_{i=1}^N v_i =0$. Then there exist a permutation $\pi \in S_N$ and some universal constant $K_m$ 
depending only on $m$, such that
\[ \left\| \sum_{i=1}^{\lfloor N t \rfloor} v_{\pi_i} \right\|_{\infty} \leq  c K_m \]
for all $0\leq t \leq 1,$  where $\|v\|_{\infty}=\max_{1\leq l\leq m} |v^l|$ for $v=(v^1, v^2, \ldots, v^m).$
\end{theorem}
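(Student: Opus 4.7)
The statement is the classical Steinitz rearrangement theorem; the proof I would pursue follows the greedy, convex-geometric strategy most cleanly formulated by Grinberg and Sevastyanov. After rescaling, one reduces to $c = 1$. The plan is then to construct the permutation $\pi$ inductively, maintaining the invariant that the partial sum $s_k := \sum_{i=1}^{k} v_{\pi(i)}$ satisfies $\|s_k\|_\infty \leq K_m$ at every step, for a constant $K_m$ depending only on the dimension $m$.

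The key ingredient is the following selection lemma: for any $s \in \R^m$ and any finite multiset $U \subset [-1,1]^m$ with $\sum_{v \in U} v = -s$, there exists $v_\star \in U$ such that $\|s + v_\star\|_\infty \leq K_m$. The proof uses convex geometry: since the mean of $U$ equals $-s/|U|$, the vectors of $U$ must collectively push back toward the origin, so one can use Carath\'eodory's theorem (or equivalently the existence of a basic feasible solution to the linear system expressing $-s$ as a nonnegative combination of members of $U$ with coefficients bounded by $1$) to isolate a support of at most $m+1$ vectors on which $s$ can be written as a controlled combination. On this small support one identifies a single $v_\star$ whose addition to $s$ keeps the new point inside the $\ell^\infty$ box of half-width $K_m$; a convenient bookkeeping gives $K_m$ essentially linear in $m$.

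With the selection lemma in hand, one applies it at each step $k = 1, \ldots, N-1$ taking $U$ to be the current set of unused vectors (which, by the running invariant, still sums to $-s_{k-1}$). The partial sums at non-integer values $Nt$ differ from the nearest integer partial sums by a single vector of sup-norm at most $1$, so the bound transfers to all $0 \leq t \leq 1$ at the cost of increasing $K_m$ by one. The main obstacle is the selection lemma with a dimension-only constant: the naive triangle inequality gives only $\|s_k\|_\infty \leq k$, which is useless, so the $N$-independent bound is the real content and requires the Carath\'eodory/basic-feasible-solution input. Working in $\ell^\infty$ rather than in $\ell^2$ simplifies matters since the unit ball is a polytope with $2m$ facets, so one can reason coordinate by coordinate when verifying that the chosen $v_\star$ does not push any single coordinate of $s$ outside the prescribed box.
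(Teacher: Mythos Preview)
The paper does not prove Theorem~\ref{Steinitz}; it is quoted as a classical result with references to \cite{Steinitz} and \cite{GS}, and is used only as a black box to derive Corollary~\ref{SteinitzC}. So there is no proof in the paper to compare your sketch against.

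That said, your proposed argument has a genuine gap. The selection lemma as you state it is false: take $m=1$, $s=100$, and let $U$ consist of one hundred copies of $-1$; then $\sum_{v\in U}v=-s$, but $|s+v_\star|=99$ for every $v_\star\in U$, so no dimension-only constant $K_1$ can work. You presumably mean to add the hypothesis $\|s\|_\infty\le K_m$, which is what the inductive invariant supplies. But even granting this, the forward greedy step needs the \emph{same} constant $K_m$ on the output, and your appeal to Carath\'eodory does not establish that: writing $-s$ on a support of $m+1$ vectors from $U$ with coefficients in $[0,1]$ does not, by itself, single out a $v_\star$ keeping every coordinate of $s+v_\star$ inside the same box. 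The Grinberg--Sevastyanov proof avoids this difficulty by carrying a strictly stronger invariant than a mere norm bound: one builds the permutation in reverse and maintains, for each prefix set $A_k=\{\pi(1),\dots,\pi(k)\}$, an extreme point of the polytope $\{\lambda\in[0,1]^{A_k}:\sum_{i\in A_k}\lambda_i v_i=0,\ \sum_{i\in A_k}\lambda_i=k-m\}$. The identity $s_k=\sum_{i\in A_k}(1-\lambda_i)v_i$ then gives $\|s_k\|_\infty\le\sum_{i\in A_k}(1-\lambda_i)=m$ directly, and the basic-feasible-solution structure (at most $m$ fractional $\lambda_i$) is what allows one to peel off an index and re-solve at the next stage. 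Your sketch names the right tool but does not set up the invariant that makes the induction close.
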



We also have the following corollary.
\begin{corollary} 
\label{SteinitzC}
Let $\mb u^1, \ldots, \mb u^k$ be a set of orthonormal vectors in $\C^N$. There exist a constant $C$, depending only on $k$, and a permutation $\pi \in S_N$ such that
\begin{align*}
& \left| \sum_{i=1}^{\lfloor t N \rfloor} u_{\pi_i}^l \overline u_{\pi_i}^p - \frac{1}{N} \delta_{lp} \right| \leq C  max_{l,k} |u^{l}_{k}|
 \end{align*}
\end{corollary}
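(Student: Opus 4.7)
The plan is to deduce the corollary by a direct application of the Steinitz rearrangement theorem (Theorem~\ref{Steinitz}) in a real Euclidean space whose coordinates simultaneously encode all $k^2$ partial sums $\sum_i u_{\pi_i}^l \overline{u_{\pi_i}^p}$ of interest. First, I would set $M := \max_{l,j} |u_j^l|$ and build, for each $1 \leq i \leq N$, a vector $v_i \in \R^{2k^2}$ whose coordinates are indexed by pairs $(l,p)$ with $1 \leq l,p \leq k$ together with a $\Re/\Im$ tag, and defined by
\[ v_i^{(l,p),\mathrm{Re}} := \Re\bigl(u_i^l \overline{u_i^p}\bigr) - \tfrac{1}{N}\delta_{lp}, \qquad v_i^{(l,p),\mathrm{Im}} := \Im\bigl(u_i^l \overline{u_i^p}\bigr). \]
Orthonormality of $\mb u^1, \ldots, \mb u^k$ gives $\sum_{i=1}^N u_i^l \overline{u_i^p} = \delta_{lp}$, whence $\sum_{i=1}^N v_i = 0$, verifying the zero-sum hypothesis of Theorem~\ref{Steinitz}.

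Second, I would establish the uniform bound $\|v_i\|_\infty \leq M$. The key observation is that $\sum_j |u_j^l|^2 = 1$ forces $M^2 \geq 1/N$; combined with the trivial $|u_i^l \overline{u_i^p}| \leq M^2$, this yields $\|v_i\|_\infty \leq M^2 \leq M$. Thus Theorem~\ref{Steinitz} applies with $c = M$ and dimension $m = 2k^2$, supplying a permutation $\pi \in S_N$ and a constant $K_{2k^2}$ depending only on $k$ with
\[ \Bigl\| \sum_{i=1}^{\lfloor tN \rfloor} v_{\pi_i} \Bigr\|_\infty \leq M\, K_{2k^2} \qquad (0 \leq t \leq 1). \]
Reading off the $(l,p)$-coordinates and recombining their real and imaginary parts then delivers the claimed estimate with $C = 2 K_{2k^2}$, which depends only on $k$.

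The argument is essentially bookkeeping once the right real-vector encoding is chosen, so I do not anticipate any genuine obstacle. The only subtle points are the passage from complex-valued inner-product sums to real vectors (needed because Theorem~\ref{Steinitz} is stated over $\R^m$) and the observation that the $1/N$ centering can be absorbed into the $M$-bound precisely because $M^2 \geq 1/N$; no cancellation or finer arithmetic is required beyond this.
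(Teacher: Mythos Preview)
Your proposal is correct and follows essentially the same route as the paper: encode the $k^2$ centered products $u_i^l\overline{u_i^p}-\tfrac{1}{N}\delta_{lp}$ as coordinates of real vectors summing to zero, then invoke Theorem~\ref{Steinitz}. The only cosmetic differences are that the paper uses a slightly leaner $k^2$-dimensional encoding (diagonal entries plus real and imaginary parts of the strict upper triangle) rather than your $2k^2$ coordinates, and that you make explicit the inequality $M^2\geq 1/N$ (hence $M^2\le M$) that the paper asserts without justification.
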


\begin{proof}
For each $N$, consider the family $\{v_i\}_{i=1}^N$ such that $ v_i \in \R^{k^2}$ with entries $ |u_i^l|^2 - \frac{1}{N}$ for $1\leq l \leq k$,  $\Re(u_i^{l} \overline u_i^{p})$ and $\Im(u_i^{l}  \overline u_i^{p})$ for   $1\leq l < p \leq k$. Note that for $1\leq q \leq k^2$ and $1 \leq i \leq N$, $v_i^q \leq max_{l,k} |u^{l}_{k}|$.

Then by Theorem \ref{Steinitz} there exists a permutation such that 
\begin{align*}
& \left| \sum_{k=1}^{\lfloor t N \rfloor} u_{\pi_k}^l \overline u_{\pi_k}^p - \frac{1}{N} \delta_{lp} \right| \leq \left| \sum_{k=1}^{\lfloor t N \rfloor} \Re(u_{\pi_k}^l \overline u_{\pi_k}^p - \frac{1}{N} \delta_{lp}) \right| +\left| \sum_{k=1}^{\lfloor t N \rfloor} \Im(u_{\pi_k}^l \overline u_{\pi_k}^p)  \right| \\
 &\leq  2 \left \| \sum_{k=1}^{\lfloor tN \rfloor} v_{\pi_k} \right  \|_{\infty} \leq 2 K_{k^2}  max_{l,k} |u^{l}_{k}|
 \end{align*}
\end{proof}

Now we show that the arguments of \cite{BP} can be extended to the case of multiple bilinear forms. The first step is to write entries of $\mb G_N(z)$ as a martingale 
difference sequence. After estimating error terms the martingale central limit theorem is used. The bulk of the proof will be devoted to computing the 
covariance of the limiting Gaussian distribution.

\begin{proof} [Proof of Theorem \ref{fluctuations}]
Recall that $G_{pl}(z) = \sqrt{N}(\mb u_N^{p*} \mb R_N(z) \mb u_N^{l} - \E[\mb u_N^{p*} \mb R_N(z) \mb u_N^{l} ])$.
By the symmetry of $\mb G_N(z)$ we prove the limiting joint distribution for $G_{pl}(z)$ for $1\leq p \leq l \leq m$. By the Cram\'{e}r-Wold theorem it 
is sufficient to study arbitrary linear combinations of the real and imaginary parts of the matrix elements of $\mb G_N$. So we consider:

\begin{equation}
\label{CW}
 \sum_{1\leq l\leq p\leq m} \sum_{h=1}^n \Re(a_{p,l,h}) \Re(G_{pl}(z_{p,l,h})) +\Im(a_{p,l,h}) \Im(G_{pl}(z_{p,l,h})) 
\end{equation}
for arbitrary complex numbers $a_{p,l,h}$.
We begin by decomposing $G_{pl}(z) $ into a martingale difference sequence:
\[ G_{pl}(z) = \sqrt{N} \sum_{k=1}^N (\E_{k-1} - \E_{k}) \mb u_N^{p*} \mb R_N(z) \mb u_N^{l} \]
where $\E_{k}$ is conditional expectation given $W_{ij}$ for $k < i , j \leq N$ and $\E_N$ is $\E$.

In order to show that the limiting distribution of \eqref{CW} is Gaussian we use the following central limit theorem for martingale difference 
sequences \cite{Bill}.
\begin{theorem}(Theorem 35.12 in \cite{Bill})
\label{MCLT}
For each $N$,  let $Y_{Nk}$ be a martingale difference sequence with respect to $ \mathcal{F}_{n,1},  \mathcal{F}_{n,2}, \ldots $. Let 
\begin{equation}
 \sigma_{Nk}^2 = \E[ Y_{Nk}^2 | \mathcal{F}_{n,k-1}] .
\end{equation} 
Suppose that as $N$ goes to infinity
\begin{equation}
\label{limvar}
\sum_{k=1}^\infty \sigma^2_{Nk} \to_P \sigma^2 >0 
\end{equation} 
and 
\begin{equation}
\label{lind}
\sum_{k=1}^\infty \E[Y_{Nk}^2 I_{|Y_{Nk}| \geq \epsilon}] \to 0 
\end{equation} 
for each $\epsilon >0$. Then 
\begin{equation} 
\sum_{k=1}^{\infty} Y_{Nk} \to_D  N(0,\sigma^2) 
\end{equation} 
\end{theorem}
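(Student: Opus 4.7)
The plan is to prove convergence in distribution via characteristic functions, showing that for every fixed $t \in \R$,
$$\phi_N(t) := \E\bigl[\exp\bigl(it\,\textstyle \sum_{k} Y_{Nk}\bigr)\bigr] \longrightarrow \exp(-\sigma^2 t^2/2).$$
The martingale difference structure is exploited by the identity $\exp(it \sum_k Y_{Nk}) = \prod_k \exp(it Y_{Nk})$ and by conditioning iteratively on $\mathcal{F}_{N,k-1}$ from the outermost index inward, so that each factor gets replaced by its conditional expectation.

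First I would expand each factor using the Taylor identity $e^{ix} = 1 + ix - x^2/2 + r(x)$ with $|r(x)| \leq \min(|x|^3/6,\,x^2)$. Since $Y_{Nk}$ is a martingale difference, $\E[it Y_{Nk} \mid \mathcal{F}_{N,k-1}] = 0$, giving
$$\E[e^{itY_{Nk}} \mid \mathcal{F}_{N,k-1}] = 1 - \tfrac{t^2}{2} \sigma_{Nk}^2 + \rho_{Nk}(t),$$
with $\rho_{Nk}(t) := \E[r(tY_{Nk}) \mid \mathcal{F}_{N,k-1}]$. I would then compare the product $\prod_k \E[e^{itY_{Nk}} \mid \mathcal{F}_{N,k-1}]$ to $\prod_k \exp(-t^2 \sigma_{Nk}^2/2) = \exp\bigl(-t^2 \sum_k \sigma_{Nk}^2 / 2\bigr)$, which by hypothesis \eqref{limvar} converges in probability to $\exp(-t^2 \sigma^2/2)$. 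Swapping $1 - \tfrac{t^2}{2}\sigma_{Nk}^2$ and $e^{-t^2 \sigma_{Nk}^2/2}$ inside the product uses $|e^{-a} - (1-a)| \leq a^2/2$ for $a \geq 0$, together with $\max_k \sigma_{Nk}^2 \to 0$ in probability, which follows from the Lindeberg condition.

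The main obstacle is controlling the aggregate remainder $\sum_k |\rho_{Nk}(t)|$. Here \eqref{lind} enters through a standard truncation at level $\epsilon$: on $\{|Y_{Nk}| < \epsilon\}$ the cubic bound yields $\sum_k \E[|r(tY_{Nk})| \mid \mathcal{F}_{N,k-1}] \leq (|t|^3 \epsilon/6) \sum_k \sigma_{Nk}^2$, arbitrarily small for small $\epsilon$ since $\sum_k \sigma_{Nk}^2$ has a finite probabilistic limit; on $\{|Y_{Nk}| \geq \epsilon\}$ the quadratic bound gives $t^2 \sum_k \E[Y_{Nk}^2 I_{|Y_{Nk}| \geq \epsilon} \mid \mathcal{F}_{N,k-1}]$, whose expectation tends to $0$ by hypothesis. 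Sending $N \to \infty$ and then $\epsilon \to 0$ closes the argument.

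Finally, to assemble the pieces one needs to pass from convergence in probability of $\sum_k \sigma_{Nk}^2$ to convergence of the bounded quantity $\exp(-t^2 \sum_k \sigma_{Nk}^2/2)$ in $L^1$, which is immediate by dominated convergence since the exponential is bounded by $1$. Combining this with the vanishing of the remainder yields $\phi_N(t) \to \exp(-\sigma^2 t^2/2)$, and L\'evy's continuity theorem delivers the weak convergence $\sum_k Y_{Nk} \to_D N(0,\sigma^2)$.
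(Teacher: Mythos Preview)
The paper does not prove this theorem: it is quoted as Theorem~35.12 of Billingsley's \emph{Probability and Measure} and invoked as a black box inside the proof of Theorem~\ref{fluctuations}. There is therefore no argument in the paper to compare your proposal against.

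Your sketch follows the standard characteristic-function route to the martingale CLT, and most of it is sound. One step is glossed over and, as written, does not work: you assert that ``conditioning iteratively on $\mathcal{F}_{N,k-1}$ from the outermost index inward'' replaces each factor $e^{itY_{Nk}}$ in $\phi_N(t)=\E\bigl[\prod_k e^{itY_{Nk}}\bigr]$ by its conditional expectation $\eta_k:=\E[e^{itY_{Nk}}\mid\mathcal{F}_{N,k-1}]$. After the first conditioning you indeed get $\E\bigl[\prod_{k<n}e^{itY_{Nk}}\cdot\eta_n\bigr]$, but $\eta_n$ is $\mathcal{F}_{N,n-1}$-measurable and \emph{not} $\mathcal{F}_{N,n-2}$-measurable, so at the next step it cannot be pulled outside $\E[\,\cdot\mid\mathcal{F}_{N,n-2}]$; the iteration breaks down. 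The usual remedy (and this is what Billingsley does) is to observe that $M_n:=\prod_{k\leq n}e^{itY_{Nk}}\big/\prod_{k\leq n}\eta_k$ is a martingale once one has arranged $\eta_k$ bounded away from zero (via a preliminary truncation using the Lindeberg condition to force $\max_k\sigma_{Nk}^2\to 0$), giving $\E\bigl[\prod_k e^{itY_{Nk}}/\prod_k\eta_k\bigr]=1$; one then shows $\prod_k\eta_k$ is close in probability to the deterministic $e^{-t^2\sigma^2/2}$ and uses boundedness to upgrade to $L^1$. Your treatment of the Taylor remainders $\rho_{Nk}$ via the $\epsilon$-truncation splitting into a cubic piece and a Lindeberg piece, and the passage from \eqref{limvar} to convergence of the bounded exponential, are correct.
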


Before we continue with the proof we introduce some notation.
We use $\mb e_j$ to denote the $j^{th}$ canonical unit vector and $\mb w_k$ to denote the $k^{th}$ column of the matrix $\mb X_N = \frac{1}{\sqrt{N}} \mb W_N$ and $\mb w_j^{(k)}$ denote the $j^{th}$ column of the matrix $\mb X_N $ with the $k^{th}$ entry set to zero. Similarly, by $\mb R_N^{(k)}$ (resp. $\mb R_N^{(jk)}$) we denote resolvent of $\mb X_N - \mb e_k \mb w_k^* - \mb w_k \mb e_k'$ (resp. $\mb X_N - \mb e_k \mb w_k^* - \mb w_k \mb e_k'- \mb e_j \mb w_j^* - \mb w_j \mb e_j'$).  Finally $u_k^l$ is the $k^{th}$ entry of the vector $\mb u_N^l$. We use $o_P(1)$ for terms that converge to zero in probability as $N\to \infty.$

For the reader's convenience we contrast our notation with that of \cite{BP}. They use $-\mb A^{-1}$ (resp. $-\mb A_k^{-1}$, $-\mb A_{jk}^{-1}$) for $\mb R_N(z)$ (resp. $\mb R^{(k)}_N(z)$, $\mb R^{(jk)}_N(z)$). Their unit vectors are denoted $\mb x_N$ and $\mb y_N$ instead of $\mb u_N^{i}$. The vector $\mb w_j$ with the $k^{th}$ element set to 0 is denoted $\mb w_{jk}$ instead of our $\mb w_j^{(k)}$. Finally the Stieltjes transform of the semi-circle law is denoted $s(z)$ instead of $g_\sigma(z)$.

The first observation to be made is that $(\E_{k-1} - \E_{k}) \mb R_N^{(k)}(z) =0$ so it can be subtracted from the martingale decomposition of $ G_{pl}(z)$. Then the resolvent identity, \eqref{resident}, is used to compare $\mb R_N(z)$ and $\mb R_N^{(k)}(z)$, leading to 
\[  G_{pl}(z) = \sqrt{N} \sum_{k=1}^N (\E_{k-1} - \E_{k}) \mb u_N^{p*} \mb R_N(z)(\mb w_k \mb e_k' + \mb e_k \mb w_k^*) \mb R_N^{(k)} \mb u_N^{l} \]

Then $\mb R_N(z)$ can be further expanded by twice applying the identity (see for example \cite{BP} Eq. 2.3)
\begin{equation}
\label{rank1inv}
(\mb B + \mb u \mb v^* )^{-1} = \mb B^{-1} - \frac{\mb B^{-1} \mb u \mb v^* \mb B^{-1} }{ 1 + \mb v^* \mb B^{-1} \mb u} 
\end{equation}
for any invertible matrix $\mb B$ and vectors $\mb u, \mb v$. With $(\mb u, \mb v) = (\mb e_k, \mb w_k)$ and $\mb B = z-\mb X_N$ and then $(\mb w_k,\mb e_k)$ and $\mb B= z-(\mb X_N- \mb e_k \mb w_k^*)$. We remind the reader that $X_{ii} =0 $ for $1\leq i \leq N$.

This leads to 
\[ G_{pl}(z) = \sqrt{N} \sum_k (\E_{k-1}-\E_{k}) \frac{1}{z - \mb w_k^* \mb R^{(k)}_N \mb w_k}  (u_k^l \mb u_N^{p*} \mb R_N^{(k)} \mb w_k - u_k^l u_k^{p*} + \overline u_k^p \mb w_k^* \mb R_N^{(k)} \mb u_N^l + \mb u_N^{p*} \mb R^{(k)}_N \mb w_k \mb w_k^* \mb R^{(k)}_N \mb u_N^{l} ) \]


Checking the estimates in \cite{BP} from Eq. (4.1) - (4.4) leads to 
\[G_{pl}(z_{p,l,h}) = \sqrt{N} g_\sigma(z_{p,l,h}) 
\sum_{k=1}^N \E_{k-1}[ (\zeta_k^{p,l}(z_{p,l,h}) + \psi_k^{p,l}(z_{p,l,h}) + \eta_k^{p,l}(z_{p,l,h})]  +o_P(1)\]
where:
\[\zeta_k^{p,l}(z) := u_k^l \mb u_N^{p*} \mb R_N^{(k)}(z) \mb w_k \text{ and }\psi_k^{p,l}(z) :=\overline u_k^p  \mb w_k^*  \mb R_N^{(k)}(z) \mb u_N^{l}  \]
\[\eta_k^{p,l}(z) :=  \mb w_k^*  \mb R_N^{(k)}(z)  \mb u_N^l  \mb u_N^{p*}  \mb R_N^{(k)}(z)  \mb w_k - 
\frac{1}{N}(\mb u_N^{p*} \left( \mb R_N^{(k)}(z)\right)^2  \mb u_N^{l} + \frac{\overline u_k^p u_k^l}{z^2})\]

It is easily checked that \eqref{CW} satisfies condition \eqref{lind}. The following lemma considers approximations of the variance terms in 
\eqref{limvar} and 
computes the limits of these approximations. The sketch of its proof will take the majority of the remainder of this section. 
\begin{lemma}
Let $\mb u_N^p, \ \mb u_N^q, \ \mb u_N^r,$ and $\mb u_N^l$ be unit $N$-dimensional vectors such that 
any two vectors from this group are either orthogonal or the same. In addition, let us assume that the 
$\| \|_{\infty}$ norm of at least one vector in each pair $\mb u_N^l, \mb u_N^p$ and  $\mb u_N^q, \mb u_N^r$ goes to zero as $N \to \infty.$
Then the following holds
\label{limits}
\begin{align}
\label{zetazeta}
N \sum_{k=1}^{N} \E_k[   \E_{k-1}[\zeta_k^{p,l}(z_1)]\E_{k-1}[\zeta_k^{q,r}(z_2)] ]= 
\rho \sum_{k=1}^N \frac{ \sigma^2 g_\sigma(z_1) g_\sigma(z_2) u^{l}_k u^{r}_k  \sum_{j> k } 
\overline u^{p}_j \overline u^{q}_j}{1 - \sigma^2 \frac{N-k}{N} g_\sigma(z_1) g_\sigma(z_2)}  + o_P(1)
\end{align}
\begin{align}
\label{psipsi}
 N \sum_{k=1}^{N} \E_k[  \E_{k-1}[\psi_k^{p,l}(z_1)]\E_{k-1}[\psi_k^{q,r}(z_2)] ] = 
\rho \sum_{k=1}^N  \frac{ \sigma^2 g_\sigma(z_1) g_\sigma(z_2)\overline u^{p}_k 
\overline u^{q}_k \sum_{j> k } u^{l}_j u^{r}_j}{1 - \sigma^2 \frac{N-k}{N} g_\sigma(z_1) g_\sigma(z_2)}  + o_P(1)
\end{align}
\begin{align}
\label{psizeta}
N \sum_{k=1}^{N} \E_k[  \E_{k-1}[ \psi_k^{p,l}(z_1)] \E_{k-1}[\zeta_k^{q,r}(z_2)] ]=
\sum_{k=1}^N  \frac{ \sigma^2 g_\sigma(z_1) g_\sigma(z_2)\overline u^{p}_k  u^{r}_k  \sum_{j> k } 
\overline u^{q}_j u^{l}_j}{1 - \sigma^2 \frac{N-k}{N} g_\sigma(z_1) g_\sigma(z_2)}  + o_P(1)
\end{align}
\begin{align}
\label{zetapsi}
N \sum_{k=1}^{N} \E_k[ \E_{k-1}[\zeta_k^{p,l}(z_1)]\E_{k-1}[ \psi_k^{q,r}(z_2)]]=
\sum_{k=1}^N \frac{ \sigma^2 g_\sigma(z_1) g_\sigma(z_2)u^{l}_k \overline u^{q}_k  \sum_{j> k } 
\overline u^{p}_j u^{ r}_j}{1 - \sigma^2 \frac{N-k}{N} g_\sigma(z_1) g_\sigma(z_2)}  + o_P(1)
\end{align}
\begin{align}
\label{etaeta}
&N \sum_{k=1}^{N} \E_k[  \E_{k-1}[\eta_k^{p,l}(z_1)]\E_{k-1}[\eta_k^{q,r}(z_2)] ]= \\
&\frac{\rho}{N}\sum_{k=1}^N \frac{ \sigma^4 g^2_\sigma(z_1) g^2_\sigma(z_2)\sum_{i>k} 
\overline u^{p}_i \overline u^{q}_i \sum_{j>k} u^{l}_j u^{r}_j}{(1 - \frac{N-k}{N}\sigma^2 g_\sigma(z_1) g_\sigma(z_2))^2}
+ \frac{1}{N} \sum_{k=1}^N \frac{ \sigma^4 g^2_\sigma(z_1)g^2_\sigma(z_2)\sum_{i>k} 
\overline u^{p}_i u^{r}_i \sum_{j>k} \overline u^{q}_j u^{l}_j}{(1 - \frac{N-k}{N} \sigma^2 g_\sigma(z_1) g_\sigma(z_2))^2}  + o_P(1) \nonumber
\end{align}
\begin{align}
\label{zetaeta}
&N \sum_{k=1}^{N} \E_k[ \E_{k-1}[\zeta_k^{p,l}(z_1)] \E_{k-1}[\eta_k^{p,l}(z_2)]] +\E_{k-1}[\psi_k^{p,l}(z_1)] \E_{k-1}[\eta_k^{p,l}(z_2)] ] \\
&+ \E_{k-1}[\eta_k^{p,l}(z_1)]\E_{k-1}[\zeta_k^{p,l}(z_2)]  +\E_{k-1}[\eta_k^{p,l}(z_1)]\E_{k-1}[\psi_k^{p,l}(z_2)] = o_P(1),  \nonumber
\end{align}
where $o_P(1)$ stands for terms that converge to zero in probability as $N\to \infty.$

Additionally, as $N\to \infty$
\begin{align}
\label{limitvar}
& \sum_{k=1}^N \frac{2  \sigma^2 g_\sigma(z_1) g_\sigma(z_2) \overline u^{p}_k  u^{r}_k  \sum_{j> k } \overline u^{q}_j u^{l}_j}{1 - \sigma^2 \frac{N-k}{N} g_\sigma(z_1) g_\sigma(z_2)} +  \frac{1}{N} \sum_{k=1}^N \frac{ \sigma^4 g^2_\sigma(z_1)g^2_\sigma(z_2)\sum_{i>k} \overline u^{p}_i u^{r}_i \sum_{j>k} \overline u^{q}_j u^{l}_j }{(1 - \frac{N-k}{N} \sigma^2 g_\sigma(z_1) g_\sigma(z_2))^2}  \nonumber\\
&\to \delta_{pr}\delta_{ql} \left(-1 + \frac{1}{1-\sigma^2 g_\sigma(z_1) g_\sigma(z_2)  }\right).
\end{align}

\end{lemma}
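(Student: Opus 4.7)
The plan is to adapt the martingale argument of \cite{BP} to accommodate multiple test vectors. First I would compute each of the conditional expectations $\E_{k-1}[\zeta_k^{p,l}(z)]$, $\E_{k-1}[\psi_k^{p,l}(z)]$, $\E_{k-1}[\eta_k^{p,l}(z)]$ explicitly. The key structural observation is that $\mb R_N^{(k)}$ is independent of the entries of $\mb w_k$, since the former has the $k$-th row and column removed. Decomposing $\mb w_k = \mb w_k^{>} + \mb w_k^{<}$, where $\mb w_k^{>}$ collects the entries $W_{jk}/\sqrt N$ with $j>k$ (these are $\mathcal F_{k-1}$-measurable) and $\mb w_k^{<}$ collects the entries with $j<k$ (these are independent of $\mathcal F_{k-1}$ and mean-zero after the truncation (\ref{benfica})), the $\mb w_k^{<}$ part drops out, leaving for $\zeta$ and $\psi$ a linear functional of $\E_{k-1}[\mb R_N^{(k)}]$ paired with $\mb w_k^{>}$. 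The quantity $\eta_k^{p,l}$ requires handling the quadratic form $\mb w_k^{*}\mb R_N^{(k)} \mb u_N^{l}\mb u_N^{p*}\mb R_N^{(k)}\mb w_k$ via its usual split into a trace plus a mean-zero fluctuation, where the parameter $\rho$ tracks whether $\E[W_{jk}^{2}]=\sigma^{2}$ (real case) or vanishes (Hermitian case).

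Second, I would replace resolvent entries by their deterministic limits $g_\sigma(z)\delta_{ij}$, using the estimates of Proposition \ref{proposition:prop1} together with the variance bound (\ref{odinnadtsat102}). After taking $\E_k[\cdot]$ of the relevant products, each sum in \eqref{zetazeta}--\eqref{etaeta} reduces to an explicit bilinear form in the tail sums $\sum_{j>k}\bar u_j^{\bullet} u_j^{\bullet}$. The denominator $1-\tfrac{N-k}{N}\sigma^{2}g_\sigma(z_1)g_\sigma(z_2)$ arises from the implicit relation $(z-\mb w_k^{*}\mb R_N^{(k)}\mb w_k)^{-1}\to g_\sigma(z)$, which after iteration through nested deletions produces a geometric series with ratio $\tfrac{N-k}{N}\sigma^{2}g_\sigma(z_1)g_\sigma(z_2)$. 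The vanishing of the cross terms in \eqref{zetaeta} follows because products like $\E_{k-1}[\zeta_k]\E_{k-1}[\eta_k]$ are odd in the $\mb w_k^{<}$ variables; after a further $\E_k$ their contribution reduces to a third-cumulant term of order $N^{-3/2}$ which is negligible after summation in $k$.

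For the limit \eqref{limitvar} I would invoke Corollary \ref{SteinitzC} to pre-permute the coordinates so that $\sum_{j>k}\bar u_j^q u_j^l=\tfrac{N-k}{N}\delta_{ql}+o(1)$ uniformly in $k$, the error going to zero since $\|\mb u_N^{\bullet}\|_\infty\to 0$. Setting $t=(N-k)/N$ and $\alpha=\sigma^{2}g_\sigma(z_1)g_\sigma(z_2)$, the expression converts to the Riemann integral
\begin{equation*}
\delta_{pr}\delta_{ql}\left(2\alpha\int_0^1 \frac{t}{1-\alpha t}\,dt+\alpha^{2}\int_0^1\frac{t^{2}}{(1-\alpha t)^{2}}\,dt\right),
\end{equation*}
which by elementary calculus simplifies to $\delta_{pr}\delta_{ql}\bigl(-1+\tfrac{1}{1-\alpha}\bigr)$, as claimed. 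The main obstacle is the bookkeeping: one must simultaneously track two tiers of conditional expectations, apply quadratic- and bilinear-form concentration uniformly in $k$, and correctly identify both the symmetrization contributions (the $\delta_{pr}$ and $\delta_{ql}$ terms arising when two of the four vectors coincide) and the real-versus-complex $\rho$ factors. Uniformity of the Steinitz approximation across $k$ is essential, since the denominator $1-\tfrac{N-k}{N}\alpha$ can degenerate when $\alpha$ is close to one and $k$ is near zero, so any non-uniform error would blow up in the limit.
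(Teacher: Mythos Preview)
Your overall strategy matches the paper's: adapt the martingale decomposition of \cite{BP} to several vectors, compute the conditional variance blocks, and pass to the Riemann integral via the Steinitz permutation. However, two of your intermediate mechanisms are misidentified and would not close the argument as written.

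\textbf{The geometric denominator.} You attribute the factor $1-\tfrac{N-k}{N}\sigma^{2}g_\sigma(z_1)g_\sigma(z_2)$ to the Schur relation $(z-\mb w_k^{*}\mb R_N^{(k)}\mb w_k)^{-1}\to g_\sigma(z)$, but that factor has already been absorbed into the prefactor $g_\sigma(z)$ before $\zeta,\psi,\eta$ are even defined. Replacing $R_{ij}^{(k)}$ by $g_\sigma(z)\delta_{ij}$ via Proposition~\ref{proposition:prop1} loses exactly the correlations you need. What actually produces the denominator is a self-consistent equation for the bilinear form
\[
T_k:=\mb u_N^{p*}\,\E_{k-1}[\mb R^{(k)}(z_1)]\,\mb I_k\,\E_{k-1}[\mb R^{(k)}(z_2)]\,\overline{\mb u}_N^{q},
\]
obtained by expanding each $\mb e_j'\mb R^{(k)}\mb u$ with the row-removal identity (the approximation \eqref{approxBai}), averaging $\mb I_k\mb w_j^{(k)}\mb w_j^{(k)*}\mb I_k\approx N^{-1}\mb I_k$, and replacing $\mb R^{(jk)}$ back by $\mb R^{(k)}$. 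This yields $T_k=g_\sigma(z_1)g_\sigma(z_2)\bigl(\sum_{j>k}\overline u_j^{p}\overline u_j^{q}+\tfrac{N-k}{N}\sigma^{2}T_k\bigr)+o_P(1)$, and solving for $T_k$ gives the claimed fraction. Your sketch does not contain this step.

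\textbf{The cross terms \eqref{zetaeta}.} After $\E_{k-1}$ the variables $\mb w_k^{<}$ are gone, so the product $\E_{k-1}[\zeta_k]\E_{k-1}[\eta_k]$ is not ``odd in $\mb w_k^{<}$''; it is degree three in $\mb w_k^{>}$. Taking $\E_k$ therefore produces a third-moment sum $\sum_{j>k}N^{-3/2}\E[|W_{jk}|^{2}W_{jk}]\,(\ldots)_j$, but your bound ``$O(N^{-3/2})$ negligible after summation in $k$'' is too crude: the inner sum over $j$ is of size $N$, and $\sum_k|u_k^{l}|$ only gives $\sqrt N$ by Cauchy--Schwarz, so the naive count does not vanish. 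The missing ingredient is the delocalization estimate $\max_{j}\bigl|\mb u_N^{p*}\E_{k-1}[\mb R_N^{(k)}(z_1)]\mb e_j\bigr|=o_P(1)$ (Eq.~(4.31) in \cite{BP}), which lets you pull one factor out as $o_P(1)$ and bound the remaining $j$-sum by a product of $\ell^{2}$ norms. Without this you cannot show the cross terms are negligible.

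Your treatment of \eqref{limitvar} via Steinitz and the Riemann integral is correct in the case $p=r$, $q=l$; note that when only one pair coincides the paper also uses a summation-by-parts step, since the weights $\overline u_k^{p}u_k^{r}$ are not pointwise close to $N^{-1}\delta_{pr}$, only their partial sums are.
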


\begin{proof}

We begin by proving \eqref{zetazeta}. 

\begin{align*}
&N \E_k[\E_{k-1}[ u^{l}_k \mb u_N^{p*} \mb R^{(k)}(z_1) \mb w_k] \E_{k-1}[ u^{r}_k \mb u_N^{q*} \mb R^{(k)}(z_2) \mb w_k] ] \\
&= N \E_{k}[( u^{l}_k \mb u_N^{p*} \E_{k-1}[  \mb R^{(k)}(z_1) ] \mb I_k \mb w_k) (\mb w_k' \mb I_k \E_{k-1}[\mb R^{(k)}(z_2)^T] \mb{ \overline u}_N^{q} u^{r}_k)] \\
&= \sigma^2 \rho  u^{l}_k u^{r}_k \mb u_N^{p*} \E_{k-1}[  \mb R^{(k)}(z_1) ] \mb  I_k \E_{k-1}[\mb R^{(k)}(z_2)] \mb{ \overline u}_N^{q} 
\end{align*}
Here $\mb I_k := \sum_{j \geq k} \mb e_j  \mb e_j',$ and $\rho = 1$ if $\mb X_N$ is real and $0$ is $\mb X_N$ is complex.

We then apply the following approximation and its conjugate, from \cite{BP}, to $\mb u_N^{p*} \E_{k-1}[  \mb R^{(k)}(z_1) ] \mb  I_k \E_{k-1}[\mb R^{(k)}(z_2)] \mb{ \overline u}_N^{q} $:

\begin{equation}
\label{approxBai}
 \mb e_j' \mb R_N^{(k)}(z) \mb u_N = g_\sigma(z) (u_j +\mb  w_{j}^{k*} \mb R_N^{(jk)}(z) \mb u_N) + o_P(1) 
 \end{equation}
After this approximation is applied, it is shown in \cite{BP} between Eq. (4.9) and Eq. (4.15) that the cross terms are $o_P(1)$. Their analysis carries over directly leading to 
\begin{align}
\label{firstterm}
&\mb u_N^{p*} \E_{k-1}[  \mb R^{(k)}(z_1) ] \mb  I_k \E_{k-1}[\mb R^{(k)}(z_2)] \mb{ \overline u}_N^{q} \\
&= g_\sigma(z_1) g_\sigma(z_2)   \sum_{j>k} \left( \overline u^{p}_j \overline u^{q}_j + \mb u_N^{p*} \E_{k-1}[  \mb R^{(jk)}(z_1) \mb w_j^{(k)}]   \E_{k-1}[\mb w_j^{(k)*} \mb R^{(jk)}(z_2)] \mb{ \overline u}_N^{q} \right) +o_P(1) \nonumber \\
&= g_\sigma(z_1) g_\sigma(z_2) \sum_{j>k}  \left(\overline u^{p}_j \overline u^{q}_j  + \mb u_N^{p*} \E_{k-1}[  \mb R^{(jk)}(z_1) ]\mb I_k \mb w_j^{(k)}   \mb w_j^{(k)*} \mb I_k \E_{k-1}[ \mb R^{(jk)}(z_2)]\mb{ \overline u}_N^{q} \right) +o_P(1) \nonumber\\
&= g_\sigma(z_1) g_\sigma(z_2) \sum_{j>k} \left(\overline u^{p}_j \overline u^{q}_j + \frac{\sigma^2}{N} \mb u_N^{p*} \E_{k-1}[  \mb R^{(jk)}(z_1) ]\mb  I_k \E_{k-1}[ \mb R^{(jk)}(z_2)] \mb{ \overline u}_N^{q} \right)+o_p(1) \nonumber\\
\end{align}
where on the last line, Lemma 8.2 of \cite{BP} is used to show that $\mb I_k \mb w_j^{(k)}   \mb w_j^{(k)*}\mb I_k $ can be approximated by $N^{-1} \mb I_k$.
Then from Eq. (4.16) to Eq. (4.23) it shown that $\mb R^{(jk)}(z)$ can be replaced with $\mb R^{(k)}(z)$ by applying $\eqref{rank1inv}$ twice and using the Cauchy-Schwarz inequality on the error terms. Leading to

\begin{align}
&\mb u_N^{p*} \E_{k-1}[  \mb R^{(k)}(z_1) ] \mb  I_k \E_{k-1}[\mb R^{(k)}(z_2)] \mb{ \overline u}_N^{q} \\
=& g_\sigma(z_1) g_\sigma(z_2) \left(\sum_{j>k} \overline u^{p}_j \overline u^{q}_j + \frac{(N-k)\sigma^2}{N} \mb u_N^{p*} \E_{k-1}[  \mb R^{(k)}(z_1) ]\mb  I_k \E_{k-1}[ \mb R^{(k)}(z_2)] \mb{ \overline u}_N^{q} \right)+o_p(1)\nonumber
\end{align}


Solving for $\mb u_N^{p*} \E_{k-1}[  \mb R^{(k)}(z_1) ] \mb  I_k \E_{k-1}[\mb R^{(k)}(z_2)]  \mb{ \overline u}_N^{q} $, multiplying by $u^l_k u^r_k$ and summing over $k$ gives
\begin{equation}
\sum_k u^l_k u^r_k  \mb u_N^{p*} \E_{k-1}[  \mb R^{(k)}(z_1) ] \mb  I_k \E_{k-1}[\mb R^{(k)}(z_2)] \mb{ \overline u}_N^{q}  = \sum_k u^l_k u^r_k \frac{g_\sigma(z_1) g_\sigma(z_2) \sum_{j>k} \overline u^p_j \overline u^q_j}{1 - \frac{N-k}{N} \sigma^2 g_\sigma(z_1) g_\sigma(z_2)} + o_P(1)
\end{equation}

The proofs of \eqref{psipsi}-\eqref{zetapsi} follow similarly.

The proof of \eqref{etaeta} follows \cite{BP} from Eq. (4.24) to Eq. (4.32).

\begin{align*}
&N\E_k[\E_{k-1}[\eta_k^{p,l}(z_1)]\E_{k-1}[\eta_k^{q,r}(z_2)] ] \\
&=N\E_{k}[ \E_{k-1}[ \mb w_k^* \mb R^{(k)}(z_1)  \mb u_N^{l} \mb  u_N^{p*} \mb R^{(k)}(z_1) \mb w_k - \frac{1}{N}( \mb u_N^{p*} \mb R^{(k)2}(z_1) \mb u_N^{l} + \overline u^{p}_k u^{l}_k/z_1^2)  ] \\
&~~~\E_{k-1}[\mb w_k^* \mb R^{(k)}(z_2)  \mb u_N^{r} \mb  u_N^{q*} \mb R^{(k)}(z_2) \mb w_k - \frac{1}{N}( \mb u_N^{q*} \mb R^{(k)2}(z_2) \mb u_N^{r} + \overline u_k^{q} u_k^{r}/z_2^2)  ]   ]\\
&= \frac{1}{N}\sigma^4 \sum_{i,j>k} \mb e_i' \E_{k-1}[ \mb R^{(k)}(z_1) \mb u_N^{l} \mb  u_N^{p*} \mb R^{(k)}(z_1) ]\mb e_j\mb e_j' \E_{k-1}[ \mb R^{(k)}(z_2)  \mb u_N^{r} \mb  u_N^{q*} \mb R^{(k)}(z_2) ]\mb e_i \\
&+ \frac{\rho}{N}\sigma^4 \sum_{i,j>k} \mb e_i' \E_{k-1}[ \mb R^{(k)}(z_1) \mb u_N^{l} \mb  u_N^{p*} \mb R^{(k)}(z_1) ]\mb e_j\mb e_i' \E_{k-1}[ \mb R^{(k)}(z_2)  \mb u_N^{r} \mb  u_N^{q*} \mb R^{(k)}(z_2) ]\mb e_j \\
&~~~+   \sum_{j>k} \frac{\E[|W_{jk}|^4] - (2 + \rho) \sigma^4 }{N} \mb e_j' \E_{k-1}[ \mb R^{(k)}(z_1)  \mb u_N^{l} \mb  u_N^{p*} \mb R^{(k)}(z_1)] \mb e_j  \mb e_j' \E_{k-1}[ \mb R^{(k)}(z_2)  \mb u_N^{r} \mb  u_N^{q*} \mb R^{(k)}(z_2) ]\mb e_j
\end{align*}
When summed over $k$ the final term is shown to be $o_P(1)$ in \cite{BP}. Using the uniform bound on $\E[|W_{jk}|^4]$ the proof in \cite{BP} also holds in this case. We apply \eqref{approxBai} and its conjugate as before to the first term, the second term follows similarly. As before the cross terms are shown to be error terms in \cite{BP}. 

\begin{align*}
& \sum_{i,j>k}  \mb e_i' \E_{k-1}[ \mb R^{(k)}(z_1) \mb u_N^{l} \mb  u_N^{p*} \mb R^{(k)}(z_1) ]\mb e_j \mb e_j' \E_{k-1}[ \mb R^{(k)}(z_2) \mb u_N^{r} \mb  u_N^{q*}\mb R^{(k)}(z_2) ]\mb e_i \\
&= g_\sigma(z_1) g_\sigma(z_2) \sum_{i,j>k}\left( \mb e_i' \E_{k-1}[ \mb R^{(k)}(z_1) \mb u_N^{l}  (\overline u^{p}_j + \mb u_N^{p*} \mb R^{(jk)}(z_1)  \mb w_j^{(k)}  ) ] \right. \\
&~~~ \left. \E_{k-1}[(u^{r}_j + \mb w_j^{(k)*} \mb R^{(jk)}(z_2) \mb u^{r}_N)  \mb u_N^{q*} \mb R^{(k)}(z_2) ]\mb e_i \right) +o_P(1)\\
&= g_\sigma(z_1) g_\sigma(z_2) \sum_{i,j>k} \left( u^{r}_j \overline u^{p}_j \mb e_i \E_{k-1}[ \mb R^{(k)}(z_1) \mb u_N^{l}  ]  \E_{k-1}[ \mb u_N^{q*}\mb R^{(k)}(z_2) ]\mb e_i \right. \\
&~~~+ \left. \mb e_i' \E_{k-1}[ \mb R^{(k)}(z_1)  \mb u_N^{l} \mb  u_N^{p*}  \mb R^{(jk)}(z_1)   ]\mb I_k \mb w_j^{(k)}     \mb w_j^{(k)*} \mb I_k\E_{k-1}[ \mb R^{(jk)}(z_2)\mb u_N^{r} \mb  u_N^{q*} \mb R^{(k)}(z_2) ]\mb e_i \right)+o_P(1) \\
\end{align*}

Then Lemma 8.2 of \cite{BP} can be used to replace $\mb I_k \mb w_j^{(k)}     \mb w_j^{(k)*} \mb I_k$ with $N^{-1} \mb I_k$ and that $\mb R^{(jk)}(z) $  can be replaced by $\mb R^{(k)}(z)$ by applying $\eqref{rank1inv}$ twice and using the Cauchy-Schwarz inequality on the error terms.  This leads to

\begin{align*}
&= g_\sigma(z_1) g_\sigma(z_2) \sum_{i,j>k} \left( u^{r}_j \overline u^{p}_j \mb e_i' \E_{k-1}[ \mb R^{(k)}(z_1) \mb u_N^{l}  ]  \E_{k-1}[ \mb u_N^{q*}\mb R^{(k)}(z_2) ]\mb e_i \right. \\
&~~~ \left. + \frac{\sigma^2}{N}\mb e_i' \E_{k-1}[ \mb R^{(k)}(z_1)  \mb u_N^{l} \mb  u_N^{p*}  \mb R^{(k)}(z_1)   ]\mb I_k \E_{k-1}[ \mb R^{(k)}(z_2) \mb u_N^{r} \mb  u_N^{q*}  \mb R^{(k)}(z_2) ]\mb e_i \right)+o_P(1) \\
&=g_\sigma(z_1) g_\sigma(z_2) \left( \sum_{j>k}  u^{r}_j \overline u^{p}_j \E_{k-1}[ \mb u_N^{q*}\mb R^{(k)}(z_2) ]\mb I_k \E_{k-1}[ \mb R^{(k)}(z_1) \mb u_N^{l}  ]\right. \\
&~~~+ \left. \frac{N-k}{N}  \sigma^2 \sum_{i,j>k}  \mb e_i' \E_{k-1}[ \mb R^{(k)}(z_1) \mb u_N^{l} \mb  u_N^{p*}  \mb R^{(k)}(z_1)   ]\mb e_j \mb e_j' \E_{k-1}[ \mb R^{(k)}(z_2) \mb u_N^{r} \mb  u_N^{q*}  \mb R^{(k)}(z_2) ]\mb e_i \right) +o_P(1) 
\end{align*}


In the last line, the first term is $g_\sigma(z_1) g_\sigma(z_2)  \sum_{j>k}  u^{r}_j \overline u^{p}_j$ times \eqref{firstterm} and the second term is $g_\sigma(z_1) g_\sigma(z_2)  \sigma^2 (N-k)/N$ times the original expression. Solving for the original expression completes the proof of \eqref{etaeta}.

The remaining terms, \eqref{zetaeta}, contain one $\eta$ and one $\psi$ or $\zeta$. Each term goes to zero in probability, for example:

\begin{align*}
&\left| \sum_k \E_{k}[ \E_{k-1}[ \zeta_k^{p,l}(z_1) ] \E_{k-1}[ \eta_k^{p,l}(z_2) ]]\right| \\
&=\left| \sum_k  \sum_{j>k} \frac{\E[|W_{jk}|^2 W_{jk}] }{\sqrt{N}} \E_{k}[u_k^l \mb u_N^{p*} \E_{k-1}[\mb R_N^{(k)}(z_1) ]\mb e_j \mb e_j' \E_{k-1}[\mb R_N^{(k)}(z_2) \mb u_N^{l} \mb u_N^{p*} \mb R_N^{(k)}(z_2)     ]\mb e_j  ] \right|  \\
&\leq  \frac{C}{\sqrt{N}} \max_{j} |\mb u_N^{p*} \E_{k-1}[\mb R_N^{(k)}(z_1)]\mb e_j| \sum_k u_k^l   \| \E_{k-1}[\mb R_N^{(k)}(z_2) \mb u^{l}_N \| \|\mb u_N^{p*} \mb R_N^{(k)}(z_2)     ]\|
\end{align*}
Which goes to zero because $\max_{j} |\mb u_N^{p*} \E_{k-1}[\mb R_N^{(k)}(z_1)]\mb e_j|=o_P(1)$ as shown in \cite{BP} Eq. (4.31). The other terms are similar.

This completes the proof of the first part of the lemma. We now turn to computing the limit of the sums in \eqref{limitvar}.

We begin with the case where the $\| \|_\infty$ norm of all vectors goes to zero, and assume that the elements of each vector $\{ \mb u^{l} \}_{1\leq l\leq k}$ have been permuted by a permutation given in Corollary \ref{SteinitzC}. Permuting the entries of each vector $\{ \mb u^{l} \}_{1\leq l\leq k}$ is 
equivalent to conjugating $\mb W_N$ by a permutation matrix, which returns a Wigner random matrix.

If $\mb u^q_N \not = \mb u^l_N$ then by orthogonality and Corollary \ref{SteinitzC}, $|\sum_{j>k} \overline u^q_j u^l_j |\leq K_m \max_{i,a} |u^a_i |$. Using the Cauchy-Schwarz inequality shows that the first term in \eqref{limitvar} is bounded by
\begin{align*}
 \sum_k \overline  u^p_k u^r_k \frac{\sum_{j>k} \overline u^q_j  u^l_j}{1 - \frac{N-k}{N} \sigma^2 g_\sigma(z_1) g_\sigma(z_2)}  &\leq K_m \max_{i,a} |u^a_i |  \sum_{k} | \overline u^p_k u^r_k| \leq K_m  \max_{i,a} |u^a_i |  \|\mb u^p_N\| \| \mb u^r_N\| 
\end{align*}
A similar argument works to bound the second term of \eqref{limitvar}.

If  $\mb u^q_N = \mb u^l_N$ but  $\mb u^p_N \not = \mb u^r_N$ then we use the following summation by parts formula to bound the sum.
\[ \sum_{k=1}^{N} g_k f_k = f_1 \sum_{k=1}^{N} g_k + \sum_{k=1}^{N-1} (f_{k+1} - f_k) \sum_{j=k+1}^N g_{j} \]
The formula is applied with
$g_k = \overline u^p_k u^r_k$ and $f_k = \frac{\sum_{j>k} | u^q_j|^2 }{1 - \frac{N-k}{N} \sigma^2 g_\sigma(z_1) g_\sigma(z_2)} $. 
Using the estimate
\begin{align*}
& \left| \frac{\sum_{j>k+1} | u^q_j|^2 }{1 - \frac{N-k-1}{N}\sigma^2 g_\sigma(z_1) g_\sigma(z_2)} - \frac{\sum_{j>k} | u^q_j|^2 }{1 - \frac{N-k}{N}\sigma^2 g_\sigma(z_1) g_\sigma(z_2)} \right| \\
&=  \frac{1}{1 - \frac{N-k-1}{N}\sigma^2 g_\sigma(z_1) g_\sigma(z_2)}  \left|  | u^q_{k}|^2 - \sum_{j>k}| u^q_j|^2 \sigma^2 \frac{ g_\sigma(z_1) g_\sigma(z_2) }{N - (N-k)\sigma^2 g_\sigma(z_1) g_\sigma(z_2)} \right| \\
&\leq C |u^q_{k}|^2 + \frac{C}{N} \sum_{j > k} |u^q_j|^2
\end{align*} 
with the summation by parts formula gives:
\begin{align*}
&\left| \sum_k \overline u^p_k u^r_k \frac{\sum_{j>k} |u^q_j |^2}{1 - \frac{N-k}{N} \sigma^2 g_\sigma(z_1) g_\sigma(z_2)} \right| \\ &\leq \left| \frac{\sum_{j>1} | u^q_j|^2 }{1 - \frac{N-1}{N} \sigma^2 g_\sigma(z_1) g_\sigma(z_2)} K_m \max_{i,a} |u^a_i | \right| + \left| \sum_{k=1}^{N-1} C |u^q_{k}|^2 + \frac{C}{N} \sum_{k=1}^{N-1} \sum_{j > k} |u^q_j|^2\right|  K_m \max_{i,a} |u^a_i |  
 \end{align*}
The right side of the last inequality goes to zero in the limit.
The above arguments can be adapted to the case when the $\| \|_\infty$ norm of at least one of the vectors in each pair $\mb u_N^l, \mb u_N^p$ and  $\mb u_N^q, \mb u_N^r$ goes to zero as $N \to \infty.$

Returning to the case where the $\| \|_\infty$ norm of all the vectors converges to zero. If $p=r$ and $q=l$ then as $N \to \infty$ the Riemann sums converge to the following integrals. 

\begin{align*}
&\sum_{k=1}^N \frac{ \sigma^2 g_\sigma(z_1) g_\sigma(z_2)  |u^{p}_k |^2 \sum_{j> k } |u^{q}_j|^2}{1 - \sigma^2 \frac{N-k}{N} g_\sigma(z_1) g_\sigma(z_2)} 
\to \int_0^1 \frac{ \sigma^2(1-t) g_\sigma(z_1) g_\sigma(z_2)}{ 1 - \sigma^2 (1-t) g_\sigma(z_1) g_\sigma(z_2)  }dt \\
& \frac{1}{N} \sum_{k=1}^N \frac{ \sigma^4 g^2_\sigma(z_1)g^2_\sigma(z_2)\sum_{i>k} |u^{p}_i|^2 \sum_{j>k} |u^{q}_j|^2}{(1 - \frac{N-k}{N} \sigma^2 g_\sigma(z_1) g_\sigma(z_2))^2}  \to \int_0^1 \frac{ ( \sigma^2 g_\sigma(z_1) g_\sigma(z_2)(1-t))^2}{(1- (1-t) \sigma^2 g_\sigma(z_1) g_\sigma(z_2))^2}  \\
\end{align*}
Computing the integrals proves the lemma.

Now we consider the case $|u^{p}_k |^2 = \sum_{i=1}^r \delta_{k a_{i(N)}} b_i$ for some finite $r$ with $\sum b_i = 1$ then this sum can also be computed.

\begin{align*}
& \sigma^2 g_\sigma(z_1) g_\sigma(z_2) \left( \sum_{k=1}^N \frac{ |u^{p}_k |^2 \sum_{j> k } |u^{q}_j|^2}{1 - \sigma^2 \frac{N-k}{N} g_\sigma(z_1) g_\sigma(z_2)} 
+ \sum_{k=1}^N \frac{  |u^{q}_k |^2 \sum_{j> k } |u^{p}_j|^2}{1 - \sigma^2 \frac{N-k}{N} g_\sigma(z_1) g_\sigma(z_2)} \right) \\
&+ \frac{1}{N} \sum_{k=1}^N \frac{ \sigma^4 g^2_\sigma(z_1)g^2_\sigma(z_2)\sum_{i>k} |u^{p}_i|^2 \sum_{j>k} |u^{q}_j|^2}{(1 - \frac{N-k}{N} \sigma^2 g_\sigma(z_1) g_\sigma(z_2))^2} \\
&= \sum_{i=1}^r b_i \left(  \sigma^2 g_\sigma(z_1) g_\sigma(z_2) \left( \frac{1 - a_i/N}{1 -  \sigma^2 g_\sigma(z_1) g_\sigma(z_2) (1- a_i/N)} 
+ \sum_{k=1 }^{a_i} \frac{1/N}{1-  \sigma^2 g_\sigma(z_1) g_\sigma(z_2) (1 - k/N)} \right) \right.\\
&\left. + \frac{1}{N} \sum_{k=1}^{a_i} \frac{ \sigma^4 g_\sigma^2(z_1) g_\sigma^2(z_2) ( 1- k/N)   }{ (1 - (1-k/n) \sigma^2 g_\sigma(z_1) g_\sigma(z_2) )^2   } \right) \\
\end{align*}
The sequence $a_{i(N)}/N$ does not necessarily converge, but it is bounded. Let $A$ be a sub-sequential limit. Along this subsequence, the above term converges to
\begin{align*}
&\to \sum_{i=1}^r b_i  \sigma^2 g_\sigma(z_1) g_\sigma(z_2) \left( \frac{1 - A}{1 -  \sigma^2 g_\sigma(z_1) g_\sigma(z_2) (1-A)} + \int_{0}^A \frac{dt}{1 -  \sigma^2 g_\sigma(z_1) g_\sigma(z_2) (1-t)}    \right) \\
&+ \int_0^A \frac{  \sigma^4 g_\sigma^2(z_1) g_\sigma^2(z_2) (1-t) }{(1- (1-t)  \sigma^2 g_\sigma(z_1) g_\sigma(z_2) )^2} .
 \end{align*}
Computing the integrals shows that the term is independent of $A$.

Furthermore we can consider arbitrary $\mb u^p.$  We begin by permuting the entries so they are non-increasing. Then there exist some $M$ such that for 
all $m>M$, $\mb u^p_m \to 0$. By linearity the above analysis can be applied to the part which goes to zero and the part which does not separately and then combined for the desired result.

\end{proof}

Now we conclude the proof of Theorem \ref{fluctuations} by noting that Lemma \ref{limits} along with the martingale central limit 
(Theorem \ref{MCLT}) implies that centered \eqref{CW} converges in distribution to a Gaussian random variable with variance:

%
%
%
%
%
%
%
%

\begin{align*}
&\sum_{lph h'} \frac{\Re(a_{lph})}{2} \frac{\Re(a_{lph'})}{2} \rho(\Pi(z_{lph},z_{lph'})  + \Pi(\overline z_{lph}, \overline z_{lph'}) ) + (\Pi(z_{lph} ,\overline z_{lph'})  + \Pi(\overline z_{lph}, z_{lph'}) )\\
&+\delta_{lp}(\rho(\Pi(z_{lph},\overline z_{lph'})  + \Pi(\overline z_{lph},  z_{lph'}) ) + (\Pi(z_{lph} , z_{lph'})  + \Pi(\overline z_{lph},\overline z_{lph'}) ))\\
&\frac{\Re(a_{lph})}{2} \frac{\Im(a_{lph'})}{2}  \rho(\Pi(z_{lph},z_{lph'})  - \Pi(\overline z_{lph}, \overline z_{lph'} ) + (-\Pi(z_{lph} ,\overline z_{lph'} )  + \Pi(\overline z_{lph} , z_{lph'} ) )\\
&+\delta_{lp}(\rho(-\Pi(z_{lph},\overline z_{lph'})  + \Pi(\overline z_{lph},  z_{lph'}) ) + (\Pi(z_{lph} , z_{lph'})  - \Pi(\overline z_{lph},\overline z_{lph'}) ))\\
&\frac{\Im(a_{lph})}{2}\frac{\Im(a_{lph'})}{2} \rho(\Pi(z_{lph},z_{lph'})  + \Pi(\overline z_{lph}, \overline z_{lph'}) ) + (-\Pi(z_{lph} ,\overline z_{lph'})  - \Pi(\overline z_{lph}, z_{lph'}) )\\
&+\delta_{lp}( \rho(-\Pi(z_{lph},\overline z_{lph'})  - \Pi(\overline z_{lph},  z_{lph'}) ) + (\Pi(z_{lph} , z_{lph'})  + \Pi(\overline z_{lph},\overline z_{lph'}) ))\\
\end{align*}

Recall that $\Pi(z_1,z_2)$ was defined in \eqref{picov}. The proof of Theorem \ref{fluctuations} is complete.
\end{proof}

Now, we turn our attention to Theorem \ref{regfunctions}.
\begin{proof}[Proof of Theorem \ref{regfunctions}]
Denote 
\begin{equation}
\label{fn1}
\|f\|_{n,1}:=\max \left( \int_{-\infty}^{+\infty} \left|\frac{d^l f}{dx^l}(x)\right| \* dx, \ 0\leq l\leq n\right).
\end{equation}
If $\|f\|_{5,1}<\infty,$ the Gaussian fluctuations for the entries in \eqref{Ylp} follows from Theorem \ref{fluctuations}, and the bound
\begin{equation}
\label{denmark}
\var \left(\langle \mb u_N^{l}, f( \mb X_N) \mb u_N^{p} \rangle \right) \leq Const \frac{\|f\|_{5,1}}{N}
\end{equation}
(equation (1.33) in Theorem 1.6 in \cite{PRS}) by a standard approximation argument (see e.g. the last three paragraphs in Section 4 of \cite{ORS}).
It should be noted that (\ref{denmark}) follows from the bound (2.4) in Proposition 2.1 in  \cite{PRS}, i.e.
\begin{equation}
\label{holland}
\var \left(\langle \mb u_N^{l}, \mb R_N(z) \mb u_N^{p} \rangle \right) = O \left(\frac{P_8(|\Im z|^{-1})}{N}\right)
\end{equation}
by applying Helffer-Sj\"ostrand functional calculus (\cite{HS}, \cite{D}).  To prove Gaussian fluctuation for an arbitrary function
$f\in \mathcal{H}_s$ with $s>4, $ one has to strengthen (\ref{holland}) and use
\begin{equation}
\label{holland1}
\var \left(\langle \mb u_N^{l}, \mb R_N(z) \mb u_N^{p} \rangle \right) = O \left(\frac{(\E \|\mb R_N(z)\|^2) \* P_6(|\Im z|^{-1})}{N}\right)+ 
O \left(\frac{(\E \|\mb R_N(z)\|^{3/2}) \*P_6(|\Im z|^{-1})}{N}\right)
\end{equation}
by repeating the steps of Proposition 3.2 in \cite{ORS}.  One then applies Proposition 2.2 in \cite{ORS} (see also Proposition 2 in \cite{Shch} or
Proposition 1 in \cite{Shc})  to prove that
\begin{equation}
\label{denmark1}
\var \left(\langle \mb u_N^{l}, f( \mb X_N) \mb u_N^{p} \rangle \right)\leq Const_s \frac{\|f\|_{s}}{N}.
\end{equation}
The Gaussian fluctuation then follows by a standard approximation argument as before. 
The estimate of the mathematical expectation of $\langle \mb u_N^{l}, f( \widehat{\mb X}_N) \mb u_N^{p} \rangle$ follows by applying
Theorem \ref{centering} and the Helffer-Sj\"ostrand functional calculus.
Theorem \ref{regfunctions} is proven.
\end{proof}

\section{Proof of the Theorem \ref{thm:caseB}}
\label{mainthm}

\begin{proof}[Proof of Theorem \ref{thm:caseB}]
We begin with $|\theta_j |> \sigma$ an eigenvalue of $\mb A_N$ with multiplicity $k_j$, the orthonormal eigenvectors of $\mb A_N$ corresponding to 
$\theta_j$ are labeled $\mb u_N^{1}, \ldots,\mb  u_N^{k_j}$.
Following Theorem \ref{fluctuations}, $\mb G_N^{j}(z)$ is the $k_j \times k_j$ matrix with entries
\[ (\mb G_N^{j}(z))_{pl} =  \sqrt{N}(\mb u_N^{p*} \mb{ \widehat R}_N(z) \mb u_N^{l} - \E[\mb u_N^{p*} \mb{ \widehat R}_N(z) \mb u_N^{l} ])  \]

By Proposition \ref{Lemma13}, the fluctuations of the eigenvalues can be expressed in terms of 
the fluctuations of the eigenvalues of $\mb \Xi_N^{j}$. Then using the definition of $\mb \Xi_N^{j}$, 
the estimate on $\sqrt{N}((\langle \mb u_N^l , \mb R_N(z) \mb u_N^p \rangle - \langle \mb u_N^l, \mb{ \widehat R}_N(z)\mb u_N^p \rangle )$ and 
Theorem \ref{centering} leads to:
\begin{align*}
\Xi^{j}_{lm} &= \sqrt{N}(\langle \mb u_N^l, \mb R_N(z) \mb u_N^p \rangle  - \frac{1}{\theta_j} \delta_{lp} )\\
&=  \sqrt{N}((\langle \mb u_N^l , \mb R_N(z) \mb u_N^p \rangle - \langle \mb u_N^l, \mb{ \widehat R}_N(z)\mb u_N^p \rangle ) + 
(\langle \mb u_N^l, \mb{ \widehat R}_N(z)\mb u_N^p \rangle  - \E[\langle \mb u_N^l, \mb{ \widehat R}_N(z)\mb u_N^p \rangle ])\\
& + (\E[\langle \mb u_N^l, \mb{ \widehat R}_N(z)\mb u_N^p \rangle ] - \frac{1}{\theta_j} \delta_{lp}))\\
&= \sqrt{N} (\langle \mb u_N^l, \mb{ \widehat R}_N(z)\mb u_N^p \rangle  - \E[\langle \mb u_N^l, \mb{ \widehat R}_N(z)\mb u_N^p \rangle ]) 
+ \sqrt{N} (\E[\langle \mb u_N^l, \mb{ \widehat R}_N(z)\mb u_N^p \rangle ] - \frac{1}{\theta_j} \delta_{lp}) \\
& +\sqrt{N}(\langle \mb u_N^l , \mb R_N(z) \mb u_N^p \rangle - \langle \mb u_N^l, \mb{ \widehat R}_N(z)\mb u_N^p \rangle )\\
&= \mb G^j_N(\rho_j)_{lm} + \frac{1}{\theta^4 N} \mb u_N^{l*} \mb M_3 \mb u_N^{m} + o_P(1),
\end{align*}
where $ \sqrt{N}(\langle \mb u_N^l , \mb R_N(z) \mb u_N^p \rangle - \langle \mb u_N^l, \mb{ \widehat R}_N(z)\mb u_N^p \rangle )=o_P(1) $
follows from Lemma 2.1 of \cite{ORS} and Lemma \ref{diagrem} in the Appendix.
 By Theorem \ref{fluctuations}, 
$\mb G^j_N(z)$ converges weakly in finite dimensional distributions to the $k_j \times k_j$ matrix valued random field $\mb \Gamma(z)$ with independent centered 
entries that are Gaussian. Since the difference between $\mb G^j_N(z)$ and $\mb \Xi^{j}$ converges to a constant in probability, $\mb \Xi^{j}$ converges in finite dimensional distributions to $\mb \Gamma(z)$ plus that constant. All that is left to check is that entries have the announced variance.

First note that:
\begin{align*}
\Pi(\rho_j,\rho_j) =& \left(g(\rho_j)^2 - g'(\rho_j)  \right) =  \left(-\frac{1}{\theta^2} + \frac{1}{\theta^2 -\sigma^2} \right)\\
& \left(-\frac{\theta^2 +\sigma^2-\theta^2}{\theta^2(\theta^2 -\sigma^2)}\right) =  \left(\frac{ \sigma^2}{\theta^2(\theta^2 -\sigma^2)}\right)\\
\end{align*}
So we conclude:
\begin{align*}
\lim_{N\to \infty} \E[\Re(\Gamma_{lp}(z_1))\Re(\Gamma_{lp}(z_2))] &=  \frac{(\delta_{lp}+1)(\rho+1)}{2}\left(\frac{ \sigma^2}{\theta^2(\theta^2 -\sigma^2)}\right)\\
\lim_{N\to \infty} \E[\Re(\Gamma_{lp}(z_1))\Im(\Gamma_{lp}(z_2))] &= 0\\
\lim_{N\to \infty} \E[\Im(\Gamma_{lp}(z_1))\Im(\Gamma_{lp}(z_2))] &=  \frac{(1-\delta_{lp})(1-\rho)}{2}\left(\frac{ \sigma^2}{\theta^2(\theta^2 -\sigma^2)}\right)
\end{align*}

Multiplying by $ (c_{\theta}/g'_\sigma(\rho) )^2= \left( \frac{\theta^2}{\theta^2-\sigma^2}\right)^2  (\theta^2 - \sigma^2)^2$ gives the desired variance.

\end{proof}

\section{Appendix}

\begin{proof}[Proof of Lemma \ref{3term}]

We will prove the first estimate, at the end of the proof we note that the other two estimates can be proved similarly.
We apply the resolvent identity \eqref{resident} and decoupling formula \eqref{decouple} to 
$\sum_{k} \E[R_{12} R_{1k} R_{kk}]$ and $\sum_k\E[R_{12}] \E[ R_{1k} R_{kk}]$. After estimating error terms we study the difference
of $\sum_{k} \E[R_{12} R_{1k} R_{kk}]$ and $\sum_k\E[R_{12}] \E[ R_{1k} R_{kk}].$
\begin{align}
\label{res3cum1}
z \sum_{k} \E[R_{12}R_{1k} R_{kk}] = \E[\sum_{l,k} X_{1l} R_{l2}  R_{1k} R_{kk}]
\end{align}
In the decoupling formula, \eqref{decouple}, the second cumulant term is
\begin{align*}
&=\frac{\sigma^2}{ N} \sum_{k} \E[ ((\mb R_N^2)_{12} +\Tr(\mb R_N) R_{12})  R_{1k} R_{kk} +  (\mb R_N^2)_{2k} R_{11}  R_{kk}+ (\mb R_N^2)_{12} R_{1k} R_{kk} \\
&+ (\mb R_N^2)_{2k} R_{1k} R_{k1} + (\mb R_N^2)_{2k} R_{1k} R_{1k} ]\\
&= \frac{\sigma^2}{ N} \sum_{k} \E[\Tr(\mb R_N) R_{12}  R_{1k} R_{kk}] + O\left(\frac{|\Im(z)|^{-4}}{N^{1/2}}\right)
\end{align*}
We used \eqref{solnce} to estimate the error.

Each of the third cumulant terms will have 3 $l$'s and 3 $k$'s in the matrix subscripts. For example:
\[ \frac{1}{N^{3/2}} \sum_{k,l} \kappa_{3,1l} R_{ll} \E[R_{11} R_{l2} R_{1k} R_{kk}] = \frac{1}{N^{3/2}} \E[\sum_{l} \kappa_{3,1l} R_{ll} R_{11} R_{l2} \sum_{k} R_{1k} R_{kk}].   \] 
Using \eqref{solnce} the absolute value of this term is bounded by $ O\left(\frac{|\Im(z)|^{-5}}{N^{1/2}}\right)$. All the third cumulant terms can be bounded in the same manner except:
\begin{align*} 
& \left|\frac{1}{N^{3/2}} \sum_{k,l} \kappa_{3,1l} R_{ll} R_{12} R_{11} R_{lk} R_{kk} \right| \leq \left|\frac{K}{N^{3/2}} R_{12} R_{11} \mb R_N^{(D)} \mb R_N \mb R_N^{(D)} \right| \\
&\leq  \frac{K}{N^{3/2}} |\Im(z)|^{-2} \|\mb R_N^{(D)}\| \|\mb R_N\| \|\mb R_N^{(D)}\| = O\left( \frac{|\Im(z)|^{-5}}{N^{1/2}} \right),
\end{align*}
where $\mb R^{(D)} $ is the N-dimensional vector with $\mb R^{(D)}_{i} = R_{ii}$. 

The fourth cumulant terms are of the form 
\[\left| \frac{1}{N^2} \sum_{l,k} \kappa_{4,1l}R_{ka} R_{**}R_{**}R_{**}R_{**}R_{**} \right| \leq \frac{K|\Im(z)|^{-5} }{N} \sum_{k} |R_{ka}| = 
O\left(\frac{|\Im(z)|^{-6}}{N^{1/2}}  \right),  \]
where $a\in \{1,2,l\}.$ 
A similar argument works for the fifth cumulant terms, using that $\kappa_{5,1l} \leq const \* N^{1/4}$.

Finally the truncation term is the sum of $N^2$ terms, each bounded by $|\Im(z)|^{-8} N^{-5/2}$.
Applying the above estimates to \eqref{res3cum1} leads to:
\begin{align}
\label{varest12k}
z \sum_{k} \E[R_{12}R_{1k} R_{kk}] = \sigma^2 \sum_{k} \E[\tr_N(\mb R_N) R_{12}  R_{1k} R_{kk}] + O\left(\frac{P_8(|\Im(z)|^{-1})}{N^{1/2}}\right).
\end{align}

Now we apply the resolvent identity \eqref{resident} to the other term of interest.
 \[z \sum_{k} \E[R_{12}] \E[R_{1k} R_{kk}] = \sum_{k} \E[ R_{12}] \E[R_{kk}\delta_{k1} + \sum_{l} X_{1l} R_{lk} R_{kk}].\] 
Before applying the decoupling formula note that by \eqref{odinnadtsat101}
\[ |\E[ R_{12}] \E[ R_{11} ]| \leq \frac{P_6(|\Im(z)|^{-1})}{N} \]
The second cumulant term is:
\begin{align*}
&\frac{\sigma^2}{N} \sum_{l,k} \E[ R_{12}] \E[(R_{l1}R_{lk} +R_{ll}R_{1k} ) R_{kk} + R_{lk}(R_{k1}R_{lk}+R_{kl}R_{1k})  ]\\
&= \frac{\sigma^2}{N} \sum_{k} \E[ R_{12}]\E[ \Tr(\mb R_N)  R_{1k} R_{kk}] + O\left(\frac{|\Im(z)|^{-4}}{N^{1/2}}\right)
\end{align*}
The higher order terms can be bounded as before leading to:
\begin{align}
\label{varest12-k}
z \sum_{k} \E[R_{12}] \E[ R_{1k} R_{kk}] = \sigma^2 \sum_{k} \E[ R_{12}] \E[ \tr_N(\mb R_N) R_{1k} R_{kk}] + O\left(\frac{P_8(|\Im(z)|^{-1})}{N^{1/2}}\right)
\end{align}

Taking the difference of \eqref{varest12k} and \eqref{varest12-k} and subtracting $\sum_k \sigma^2 \E[\tr_N(R)] \E[ R_{1k} R_{kk}(R_{12} -\E[ R_{12}] )] $
from both sides of the equation leads to:
\begin{align*}
&(z-\sigma^2  \E[\tr_N(\mb R_N)] ) \sum_{k} \E[R_{1k} R_{kk}(R_{12} -\E[ R_{12}] )]  \\
&=  \sigma^2 \sum_{k} \E[\tr_N(\mb R_N) R_{12}  R_{1k} R_{kk}] -\sigma^2 \sum_{k} \E[ R_{12}]\E[ \tr_N(\mb R_N) R_{1k} R_{kk}] \\
&~~~ -  \sigma^2 \E[\tr_N(\mb R_N)] \sum_{k} \E[R_{1k} R_{kk}(R_{12} -\E[ R_{12}] )] + O\left(\frac{P_8(|\Im(z)|^{-1})}{N^{1/2}}\right) \\
&=   \sigma^2 \sum_{k}  \E[(\tr_N(\mb R_N)-\E[\tr_N(\mb R_N)]) R_{1k} R_{kk}(R_{12} -\E[ R_{12}] )]  + O\left(\frac{P_8(|\Im(z)|^{-1})}{N^{1/2}}\right) 
\end{align*}
Therefore,
\begin{align*}
&|(z-\sigma^2  \E[\tr_N(\mb R_N)] ) \sum_{k} \E[R_{12}( R_{1k} R_{kk} -\E[  R_{1k} R_{kk}] )]| \\
&\leq   \sigma^2 \sum_{k} |\Im(z)|^{-2} \V(R_{12})^{1/2} \V(\tr_N(\mb R_N))^{1/2} + O\left(\frac{P_8(|\Im(z)|^{-1})}{N^{1/2}}\right) 
\end{align*}
which implies
\begin{equation*}
| \sum_{k} \E[R_{12}( R_{1k} R_{kk} -\E[  R_{1k} R_{kk}] )]|\leq \frac{P_9(|\Im(z)|^{-1})}{N^{1/2}},
\end{equation*}
where we use the bound $ |(z- \sigma^2  \E[\tr_N(\mb R_N)])^{-1}| \leq |\Im(z)|^{-1}$.

The second estimate is proved in the same manner, beginning with the resolvent identity \eqref{resident} $zR_{k2} = (\delta_{k2} + \sum_l R_{kl} X_{l2} ) $.

Similarly, to study $\sum_{k} \E[R_{2k} R_{1k}^2]$  we apply the resolvent identity \eqref{resident} $z \sum_{k}R_{2k}R_{1k}^2  = R_{12}^2 + \sum_{l,k} X_{2l} R_{lk}  R_{1k}^2$ and to study $\sum_{k} \E[R_{2k}] \E[R_{1k}^2]$  we apply $ z \sum_{k} \E[R_{2k}] \E[R_{1k}^2] = \E[ R_{2k}] \E[  \sum_{l,k} X_{1l} R_{lk} R_{1k}] + \E[\frac{1}{z} R_{21}] \E[R_{11}]$ and proceed as in the proof of \eqref{3termeq}

\end{proof}

\subsection{Removal of Diagonal Terms}
\label{RODT}

In this section we truncate the entries of $\mb W_N$ and remove the diagonal elements that are not relevant in the limiting distribution of $\mb u_N^* \mb R_N(z) \mb v_N$. In this procedure we will consider arbitrary unit vectors $\mb u_N$ and $\mb v_N$ in order to change the assumption of five finite moments to the optimal four finite moments in \cite{PRS}. Because conjugating a Wigner matrix by a permutation matrix gives a Wigner matrix we can without loss of generality assume that there exist some finite $m$ such that the entries of $\mb u_N$ and $\mb v_N$ that do not go to zero are in the first $m < \infty$ coordinates for all $N$. In what follows we will set all the diagonal elements $W_{ii}$ for $i>m$ equal to zero. 

We note that the results \cite{PRS} show that the diagonal entries of $\mb W_N$ cannot be removed without effecting the limiting distribution if the $\| \|_\infty$ of $\mb u_N$ and $\mb v_N$ do not go to zero.

We begin by noting that Lemma 2.1 of \cite{ORS} allows us to replace $\mb W_N$ with another Wigner matrix whose first 2 moments match but its entries are bounded by $\epsilon_N \sqrt{N}$ for some $\epsilon_N$ that goes to $0$. For the remainder of the section we assume that we are working with the new matrix.

\begin{lemma}
\label{diagrem}
Let $\mb X_N=\frac{1}{\sqrt{N}} \mb W_N$ be a random real symmetric (Hermitian) Wigner matrix
defined in (\ref{offdiagreal}-\ref{diagreal2}) (respectively (\ref{offdiagherm1}-\ref{diagherm})).
Let $\mb u_N,\mb v_N$ be a sequence of orthogonal $N$ dimensional unit vectors such that $(\mb u_N)_i,(\mb v_N)_i \to 0$ for all $i>m$ and some $m < \infty$.
Let $\text{diag}_m(\mb X_N)$ be the diagonal matrix such that $\text{diag}_m(\mb X_N)_{ii} = (\mb X_N)_{ii}$ for $i>m$ and $0$ otherwise and $\widehat{\mb R}_N(z):= (z \mb I_N - (\mb X_N - \diag_m(\mb X_N)))^{-1}$
 \[\sqrt{N}( \mb u_N^* \mb R_N(z) \mb v_N -  \mb u_N^* \widehat{\mb R}_N(z) \mb v_N)\to_P 0.\]
 \end{lemma}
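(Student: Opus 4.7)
The plan is to isolate the removed diagonal entries via the resolvent identity \eqref{resident} and then estimate the resulting expansion. Write $D := \diag_m(\mb X_N)$, so that $\widehat{\mb R}_N(z)$ is the resolvent of $\mb X_N - D$; then
\[
\mb R_N - \widehat{\mb R}_N \;=\; \widehat{\mb R}_N\,D\,\mb R_N \;=\; \widehat{\mb R}_N\,D\,\widehat{\mb R}_N \;+\; \widehat{\mb R}_N\,D\,\widehat{\mb R}_N\,D\,\mb R_N .
\]
Sandwiching this identity between $\mb u_N^{*}$ and $\mb v_N$ and multiplying by $\sqrt N$ yields a decomposition $T_1 + T_2$, where $T_1$ is linear in $D$ and $T_2$ is quadratic. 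After the entrywise truncation applied immediately before the lemma (via Lemma~2.1 of \cite{ORS}), $D$ is a diagonal matrix supported on $\{i>m\}$ whose nonzero entries $D_{ii}$ are independent of $\widehat{\mb R}_N$, centered, and satisfy $|D_{ii}| \leq \epsilon_N$ together with $\V(D_{ii}) = O(1/N)$; the truncation level $\epsilon_N$ may be chosen to tend to $0$ slowly enough that $\sqrt N\,\epsilon_N^{2}\to 0$.

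The remainder is controlled by a crude operator-norm bound: $|T_2| \leq \sqrt N\,\|D\|^{2}\|\widehat{\mb R}_N\|^{2}\|\mb R_N\| \leq \sqrt N\,\epsilon_N^{2}/|\Im z|^{3}\to 0$. For the leading term write
\[
T_1 \;=\; \sqrt N \sum_{i>m} D_{ii}\,\alpha_i\,\beta_i, \qquad \alpha_i := (\mb u_N^{*}\widehat{\mb R}_N(z))_i,\ \ \beta_i := (\widehat{\mb R}_N(z)\,\mb v_N)_i,
\]
both of which are independent of $D$. Independence together with $\E D_{ii} = 0$ gives $\E[T_1\mid\widehat{\mb R}_N] = 0$ and
\[
\V(T_1\mid\widehat{\mb R}_N) \;=\; N\sum_{i>m}\V(D_{ii})\,|\alpha_i|^{2}|\beta_i|^{2} \;\leq\; C\sum_{i>m}|\alpha_i|^{2}|\beta_i|^{2},
\]
so by Chebyshev it suffices to show that the right-hand side tends to $0$ in probability.

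The main obstacle is precisely this variance estimate, and this is where the hypothesis $(\mb u_N)_i,(\mb v_N)_i\to 0$ for $i>m$ must be used. I would decompose $\mb v_N = P_m\mb v_N + Q_m\mb v_N$ (and analogously for $\mb u_N$), where $P_m$ is the orthogonal projection onto $\mathrm{span}\{\mb e_1,\dots,\mb e_m\}$, and split $|\alpha_i|^{2}|\beta_i|^{2}$ into the four resulting cross contributions. For the ``localized'' part $(\widehat{\mb R}_N P_m\mb v_N)_i$ with $i>m$, the variance bound \eqref{odinnadtsat102} in Proposition~\ref{proposition:prop1} forces $|(\widehat{\mb R}_N)_{ij}| = O_P(N^{-1/2})$ for each of the $m$ indices $j \leq m$, so each such entry is $O_P(N^{-1/2})$ uniformly in $i>m$; combined with the bound $\sum_{i>m} |\alpha_i|^{2} \leq |\Im z|^{-2}$ this contributes $O_P(1/N)$. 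For the ``delocalized'' part $(\widehat{\mb R}_N Q_m\mb v_N)_i$ one uses $\|\widehat{\mb R}_N Q_m\mb v_N\|^{2} \leq 1/|\Im z|^{2}$ together with the pointwise vanishing of $(\mb v_N)_i$ on tail coordinates and concentration of off-diagonal resolvent entries to show, after a Cauchy--Schwarz step using the analogous decomposition of $\alpha$, that the remaining contributions are $o_P(1)$. Assembling the four cross terms delivers $\sum_{i>m}|\alpha_i|^{2}|\beta_i|^{2}\to 0$ in probability, completing the argument.
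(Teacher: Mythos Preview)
Your idea of iterating the resolvent identity once more so that the linear term $T_1$ carries coefficients $\alpha_i,\beta_i$ built only from $\widehat{\mb R}_N$ (and hence independent of $D$) is cleaner than what the paper does: the paper stops after one iteration, keeps a factor of the full resolvent $\mb R_N$, and then has to peel off the $W_{ii}$- and $W_{jj}$-dependence entry by entry via the rank-one formula \eqref{rank1inv}, working on the good spectral event $\Omega'$. Unfortunately, the shortcut you take to dispose of $T_2$ does not work under the paper's hypotheses, and your $T_1$ variance argument also has gaps.

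\textbf{The $T_2$ estimate is the real problem.} Your bound $|T_2|\le \sqrt N\,\epsilon_N^{2}/|\Im z|^{3}$ needs $\epsilon_N=o(N^{-1/4})$, i.e.\ a diagonal truncation at level $o(N^{1/4})$. But the diagonal entries are only assumed to have finite second moment plus the Lindeberg condition \eqref{diagreal1}--\eqref{diagreal2}; the truncation lemma then delivers \emph{some} $\epsilon_N\to 0$ with no rate whatsoever, and in particular no reason for $\sqrt N\,\epsilon_N^{2}\to 0$. The paper never needs such a rate because it does not iterate: the single copy of $D$ it produces is handled by computing an $L^2$ norm and exploiting $\E W_{ii}=0$, $\E W_{ii}^2\le\sigma_1^2$. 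If you want to keep your two-step expansion you must treat $T_2$ probabilistically as well (e.g.\ expand $\mb R_N$ once more inside $T_2$ and use independence of $D$ from $\widehat{\mb R}_N$), not by a deterministic norm bound.

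\textbf{The $T_1$ variance bound is incomplete.} First, the pointwise estimate $\V(R_{ij})=O(1/N)$ from \eqref{odinnadtsat102} does \emph{not} give $(\widehat{\mb R}_N)_{ij}=O_P(N^{-1/2})$ uniformly in $i>m$: you are taking a maximum over $N-m$ indices and a second-moment bound cannot survive that union. The paper obtains $\max_{i>m}|\widehat R_{ik}|\to 0$ differently, via the Schur-type identity $\widehat R_{ik}=\widehat R_{ii}\widehat R^{(i)}_{kk}(X_{ik}+\mb w_i^{*}\widehat{\mb R}^{(ik)}\mb w_k)$ and a fourth-moment Markov inequality on the quadratic form. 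Second, for the ``delocalized $\times$ delocalized'' piece you never actually show that $\sum_{i>m}|(\widehat{\mb R}_N Q_m\mb u_N)_i|^{2}|(\widehat{\mb R}_N Q_m\mb v_N)_i|^{2}\to 0$; the Cauchy--Schwarz step you allude to would require $\max_{i}|(\widehat{\mb R}_N Q_m\mb v_N)_i|\to 0$, which is not a consequence of $\|Q_m\mb v_N\|_\infty\to 0$ alone. The paper sidesteps this by splitting $(\widehat{\mb R}\mb v_N)_i=\sum_{k>m}\widehat R_{ik}v_k+\sum_{k\le m}\widehat R_{ik}v_k$ and pairing the first piece with $\max_{k>m}|v_k|\to 0$ and the second with the uniform off-diagonal bound above.
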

\begin{proof}
Let $\epsilon > 0$. We begin by noting that by Proposition 2.1 of \cite{ORS} the event $\Omega := \{ \mb X_N\big| \|\mb X_N\| < 2\sigma + \epsilon\}$ has measure going to one as $N \to \infty$. So it is sufficient to prove convergence on this event. 

Then using the resolvent identity 
\begin{align*}
\sqrt{N}( \mb u_N^* \mb R_N(z) \mb v_N -  \mb u_N^* \widehat{\mb R}_N(z) \mb v_N )&= \sqrt{N}( \mb u_N^* \mb R_N(z) \diag_m(\mb X_N) \widehat{\mb R}_N(z) \mb v_N) \\
&= \sum_{i>m} \Big(\mb u_N^* \mb R(z)\Big)_i W_{ii} \left(\widehat{ \mb R}(z) \mb v_N\right)_i 
\end{align*}
Let $\Omega' := \{\mb X_N \big| \|\mb X_N - \diag_m(\mb X_N)\| < 2\sigma + \epsilon + \epsilon_N\}$ and note that $\Omega \subseteq \Omega'$. The set $\Omega'$ is useful because the resolvent $\mb R^{ii}_N(x)$ (defined below) for real $x > 2 \sigma + \epsilon + \epsilon_N$ is bounded by $(x - (2 \sigma + \epsilon + \epsilon_N))^{-1}$.

We now show the above term goes to zero in $L^2$.
\begin{align}
&\E\left[\left| \sum_{i>m} \Big(\mb u_N^{*} \mb R(z)\Big)_i W_{ii} \Big(\widehat{ \mb R}(z) \mb v_N\Big)_i \right|^2 \indicator{\Omega}\right] \leq \E\left[\left| \sum_{i>m} \Big(\mb u_N^{*} \mb R(z)\Big)_i W_{ii} \Big(\widehat{ \mb R}(z) \mb v_N\Big)_i  \right|^2 \indicator{\Omega'} \right] \nonumber \\
\label{diterm}
&= \sum_{i>m} \E\left[\left|  \Big(\mb u_N^{*} \mb R(z)\Big)_i W_{ii} \Big(\widehat{ \mb R}(z) \mb v_N\Big)_i \right|^2\indicator{\Omega'}\right] \\
\label{crossterm}
&+ \sum_{i\not=j>m} \E\left[\left(  \Big(\mb u_N^{*} \mb R(z)\Big)_i W_{ii} \Big(\widehat{ \mb R}(z) \mb v_N\Big)_i \right) 
\left(  \Big(\mb v_N^{*} \widehat{\mb R}(\overline z)\Big)_j W_{jj} \Big( \mb R(\overline z) \mb u_N\Big)_j \right)\indicator{\Omega'}\right]
\end{align}
 
Let $\mb R_N^{(ii)}(z) = (z \mb I_N - (\mb X_N - \mb e_i X_{ii} \mb e_i'  )  )^{-1}$. Applying \eqref{rank1inv}  gives

\begin{align*}
 \mb R_N(z) \mb e_i &=   (z \mb I_N - (\mb X_N - \mb e_i X_{ii} \mb e_i'  ) - \mb e_i X_{ii} \mb e_i'    )^{-1} \mb e_i \\
 &= \mb R_N^{(ii)}(z) \mb e_i (1 - X_{ii} ( \mb e_i' \mb R_N^{(ii)}(z) \mb e_i))^{-1}\\
\end{align*}
Let $\beta_{i} := (1 - X_{ii} ( \mb e_i' \mb R_N^{(ii)}(z) \mb e_i))^{-1}$.

We bound  \eqref{diterm} by:
\begin{align}
 &\sum_{i>m} \E[|  (\mb u_N^{*} \mb R(z))_i W_{ii} (\widehat{ \mb R}(z) \mb v_N)_i|^2\indicator{\Omega'}]  \nonumber\\
 &=  \sum_{i>m} \E[|  (\mb u_N^{*} \mb R^{(ii)}(z))_i W_{ii} (\widehat{ \mb R}(z) \mb v_N)_i  \beta_{i} |^2\indicator{\Omega'}] \nonumber \\
  \label{quin}
 &=\sum_{i>m} \E[W_{ii}^2] \E[|  (\mb u_N^{*} \mb R^{(ii)}(z))_i (\widehat{ \mb R}(z) \mb v_N)_i|^2 \indicator{\Omega'}] \\
  \label{betaerror}
 &- \E[|  (\mb u_N^{*} \mb R^{(ii)}(z))_i W_{ii} (\widehat{ \mb R}(z) \mb v_N)_i|^2 (1 - |\beta_i|^{2}) \indicator{\Omega'}  \ ]
\end{align}
The first part of this equation \eqref{quin} is bounded by
\begin{align*}
&\sum_{i>m} \E[W_{ii}^2] \E[|  (\mb u_N^{*} \mb R^{(ii)}(z))_i (\widehat{ \mb R}(z) \mb v_N)_i|^2 \indicator{\Omega'}]   \\
&\leq \max_{i} \E[W_{ii}^2]  \sum_{i>m}  \left( \sum_{k > m}   \E[| (\mb u_N^{*} \mb R^{(ii)}(z))_i \widehat{ R}_{ik}  v_{k}|^2 \indicator{\Omega'}] + \sum_{ k \leq  m}  \E[| (\mb u_N^{*} \mb R^{(ii)}(z))_i \widehat{ R}_{ik}  v_{k}|^2 \indicator{\Omega'}]  \right) \\
&\leq C \left( \max_{k > m} |v_k|^2  \sum_{k > m}   \E[| ( \mb u^*_N \mb R^{(ii)}(z) \mb I_{m+1} \widehat{\mb  R}(z))_{k} |^2 \indicator{\Omega'}]   + C m   \max_{k\leq m} \E[ \max_{i>m}  |\widehat{ R}_{ik} v_k|^2  \indicator{\Omega'}]     \right)   
\end{align*}
%
%
The first term of this term converges to zero because $v_k \to 0$ for all $k>m$. The second term converges to zero because the $\E[ max_{i\not= k} \widehat R_{ik} \indicator{\Omega'}]$ converges to zero, indeed starting from the following identity for an off-diagonal resolvent entry (see for example \cite{E}) 

\[ \widehat R_{ik} = \widehat R_{ii} \widehat R^{(i)}_{kk}(X_{ik} + \mb w_{i}^* \widehat{\mb  R}^{(ik)} \mb w_{k}) \]

%

Let $\tilde \epsilon > 0$. Using $\E[\mb w_{i}^* \widehat{\mb  R}^{(ik)} \mb w_{k}] =0$ and Markov's inequality gives that
\begin{align*}
\P( | \mb w_{i}^* \widehat{\mb  R}^{(ik)} \mb w_{k}| > \tilde \epsilon) \leq  \frac{\E[|\mb w_{i}^* \widehat{\mb  R}^{(ik)} \mb  w_{k}|^4]}{\tilde\epsilon^4} \leq \frac{\epsilon_N}{N \tilde\epsilon^{4}}
\end{align*}
then using the crude bound on $\widehat R_{ii}$
\begin{align*}
\P( max_{i>m} |\widehat R_{ik}| > \tilde \epsilon) &\leq  \P(max_{i>m} C| X_{ik} +  \mb w_{i}^* \widehat{\mb  R}^{(ik)} \mb  w_{k}|> \tilde \epsilon) \\
&\leq C( \P(max_{i>m} C| X_{ik}| > \tilde \epsilon) + N \P ( C|\mb w_{1}^* \widehat{\mb  R}^{(1k)} \mb w_{k}|> \tilde \epsilon) )\\
&\to 0.
\end{align*}
Along with the fact that $|\widehat R_{ik}(z)|\indicator{\Omega'}$ is bounded shows that $\E[max_{i>m} \widehat R_{ik} \indicator{\Omega'}]$ goes to zero and hence \eqref{diterm} goes to zero as $N \to \infty$.


The second term \eqref{betaerror} converges to zero because:

\begin{align}
\label{betabound}
(1 - \beta_i) \indicator{\Omega'} &= \frac{   X_{ii}  R_{ii}^{(ii)}(z) }{1 - X_{ii}  R_{ii}^{(ii)}(z) }\indicator{\Omega'}
\leq K \epsilon_N 
\end{align}
and the sum over the rest of the terms is bounded.

To bound the cross terms in \eqref{crossterm}, we begin by defining $\mb R_N^{(ii,jj)}(z) := (z \mb I_N - (\mb X_N - \mb e_i' X_{ii} \mb e_i  - \mb e_j' X_{jj} \mb e_j  )  )^{-1}$ and $\beta_{j,i} := (1 - X_{jj} ( \mb e_j' \mb R_N^{(ii,jj)}(z) \mb e_j))^{-1}$.
Applying \eqref{rank1inv} twice leads to 
 \begin{align*}
  \mb R_N(z) \mb e_i &= \mb R^{(ii)}_N(z) \mb e_i \beta_i\\
  &= \beta_i \left(\mb R^{(ii,jj)}_N(z) \mb e_i  - X_{jj} \mb R^{(ii,jj)}_N(z) \mb e_j' \mb e_j \mb R^{(ii,jj)}_N(z)\mb e_i \beta_{j,i} \right)
  \end{align*}
 
%

Applying this expansion to \eqref{crossterm}:
\begin{align*}
 &\sum_{i\not=j>m} \E[(  (\mb u_N^{*} \mb R(z))_i W_{ii} (\widehat{ \mb R}(z) \mb v_N)_i)(  (\mb v_N^{*} \widehat{\mb R}(\overline z))_j W_{jj} ( \mb R(\overline z) \mb u_N)_j)\indicator{\Omega'}] \\
&=\sum_{i\not=j>m} \E[\beta_i 
\left(\mb u_N^{*} \mb R^{(ii,jj)}_N(z) \mb e_i  - X_{jj}\mb u_N^{*} \mb R^{(ii,jj)}_N(z) \mb e_j \mb e_j' \mb R^{(ii,jj)}_N(z)\mb e_i \beta_{j,i} \right)W_{ii} (\widehat{ \mb R}(z) \mb v_N)_i\\
&\left(  (\mb v_N^{*} \widehat{\mb R}(\overline z))_j W_{jj} ( \mb R^{(jj)}(\overline z) \mb u_N)_j\right)\indicator{\Omega'}]\\
%
&=\sum_{i\not=j>m} \E\left[\beta_i  \left( \vphantom{\left( \frac{1}{1 - X_{jj}/(z+ \mb w_{j}^{j*} \mb R^{ii,jj}_N(z) \mb w_{j}^{j})}  \right)} \right.\right.
\left(\mb u_N^{*}\mb R^{(ii,jj)}_N(z)  \right)_i W_{ii} \Big(\widehat{ \mb R}(z) \mb v_N\Big)_i
  (\mb v_N^{*}  \widehat{  \mb R}(\overline z))_j W_{jj} (\mb R^{(jj)}(\overline z) \mb u_N)_j\\
%
%
&- X_{jj}\left( \mb u_N^{*} \mb R^{(ii,jj)}_N(z) \right)_j R^{(ii,jj)}_{ji}(z) \beta_{j,i} W_{ii} (\widehat{ \mb R}(z) \mb v_N)_i\
\left(\mb v_N^{*} \widehat{ \mb R}_N(\overline z)    \right)_j W_{jj} ( \mb R^{(jj)}(\overline z) \mb u_N)_j
%
 \left.\left. \vphantom{\left( \frac{1}{1 - X_{jj}/(z+ \mb w_{j}^{j*} \mb R^{ii,jj}_N(z) \mb w_{j}^{j})}  \right)} \right)  \indicator{\Omega'} \right]\\
\end{align*}

The first term is estimated by:
\begin{align*}
&\sum_{i\not=j>m} \E\left[\beta_i  
\left(\mb u_N^{*}\mb R^{(ii,jj)}_N(z)  \right)_i W_{ii} \Big(\widehat{ \mb R}(z) \mb v_N\Big)_i
\left(\mb v_N^{*} \mb R^{(jj)}_N(\overline z) \right)_j W_{jj} \Big(\widehat{ \mb R}(\overline z) \mb u_N\Big)_j \indicator{\Omega'}\right]\\
&= \sum_{i\not=j>m} \E[W_{jj}] \E\left[
\left(\mb u_N^{*}\mb R^{(ii,jj)}_N(z)  \right)_i W_{ii} \Big(\widehat{ \mb R}(z) \mb v_N\Big)_i
\left(\mb v_N^{*} \mb R^{(jj)}_N(\overline z) \right)_j  \Big(\widehat{ \mb R}(\overline z) \mb u_N\Big)_j \indicator{\Omega'}\right]\\
&+\sum_{i\not=j>m} \E\left[(1 - \beta_i )
\left(\mb u_N^{*}\mb R^{(ii,jj)}_N(z)  \right)_i W_{ii} \Big(\widehat{ \mb R}(z) \mb v_N\Big)_i
\left(\mb v_N^{*} \mb R^{(jj)}_N(\overline z) \right)_j W_{jj} \Big(\widehat{ \mb R}(\overline z) \mb u_N\Big)_j \indicator{\Omega'}\right]\\
\end{align*}
The first term is zero because $ \E[W_{jj}] =0$ and the second converges to zero using \eqref{betabound}.

The second term is estimated by
\begin{align*}
 &\left|\sum_{i\not=j>m} \E\left[\beta_i  X_{jj}\left( \mb u_N^{*} \mb R^{(ii,jj)}_N(z) \right)_j R^{(ii,jj)}_{ji}(z) \beta_{j,i} 
W_{ii} (\widehat{ \mb R}(z) \mb v_N)_i
\left(\mb v_N^{*} \mb R^{(jj)}_N(\overline z)    \right)_j W_{jj} (\widehat{ \mb R}(\overline z) \mb u_N)_j \indicator{\Omega'} \right] \right| \leq \\
&\left| \E\left[ \frac{1}{\sqrt{N}} \sum_{i>m}  \beta_i  W_{ii}     (\widehat{ \mb R}(z) \mb v_N)_i
\sum_{j\not=i>m}  W_{jj} \left( \mb u_N^{*} \mb R^{(ii,jj)}_N(z) \right)_j R^{(ii,jj)}_{ji}(z) \beta_{j,i}
\left(\mb v_N^{*} \mb R^{(jj)}_N(\overline z)    \right)_j W_{jj} (\widehat{ \mb R}(\overline z) \mb u_N)_j \indicator{\Omega'} \right]\right|\\
\end{align*}
Then we proceed as before, using  \eqref{betabound} to replace $\beta_i$ with 1 and then using independence of $W_{ii}$ and that its expectation is $0$.

\end{proof}


\begin{thebibliography}{99}
\bibitem{AGZ} Anderson G.W., Guionnet A., and Zeitouni O. {\em An Introduction to Random Matrices}, Cambridge Studies in Advanced Mathematics 118, 
(2010), Cambridge University Press, New York.

\bibitem{B} Z.D. Bai, {\em Methodologies in spectral analysis of large-dimensional random 
matrices, a review}. Statist. Sinica 9,  (1999), 611-677.

\bibitem{BP} Z.D. Bai, Z. and G.M. Pan,  {\em Limiting behavior of eigenvectors of large wigner matrices}, Journal of 
Statistical Physics  146, No. 3 (2012), 519-549.


\bibitem{BY} Z.D. Bai and J. Yao, {\em Central limit theorems for eigenvalues in a spiked population model}, Ann. I.H.P.-Prob.et Stat. 44, (2008), 447-474.


\bibitem{BBP} J. Baik, G. Ben Arous, and S. P\'ech\'e,  {\em Phase transition of the largest eigenvalue for non-null complex sample covariance matrices}, 
Ann. Probab. 33, (2005), 1643-1697.

\bibitem{BS} J. Baik  and J.W. Silverstein,  {\em Eigenvalues of large sample covariance matrices of spiked population models}, J. of Multi. Anal. 97, 
(2006), 1382-1408.

\bibitem{BW1} J. Baik and D. Wang, {\em On the largest eigenvalue of a Hermitian random matrix model with spiked external source I. Rank one case},
available at http://arxiv.org/1010.4604.

\bibitem{BW2} J. Baik and D. Wang, {\em On the largest eigenvalue of a Hermitian random matrix model with spiked external source II. Higher rank case},
available at http://arxiv.org//1104.2915.

\bibitem{BG} G. Ben Arous and A. Guionnet,  {\em Wigner matrices}, in Oxford Handbook on Random Matrix Theory, edited by 
Akemann G., Baik J. and Di Francesco P.,  2011, Oxford University Press, New York.

\bibitem{BR} F. Benaych-Georges  and R. Rao, {\em The eigenvalues and eigenvectors of finite, low rank perturbations of large random matrices},
Adv. Math., 227, No. 1, (2011), 494-521.

\bibitem{BGM} F. Benaych-Georges, A. Guionnet, and M. Maida,  {\em Fluctuations of the extreme eigenvalues of finite rank deformations of random 
matrices}, Elec. J. Probab., 16, (2011), 1621-1662.

\bibitem{BGM1} F. Benaych-Georges, A. Guionnet, and M. Maida, {\em Large deviations of the extreme eigenvalues of random deformations of matrices},
to appear in Probab. Theory Related Fields., available at http://arxiv.org/abs/1009.0135v3.

\bibitem{Bill} Billingsley, P. {\em   Probability and Measure}, Wiley Series in Probability and Mathematical Statistics,  
(1995), John Wiley \& Sons Inc., New York.


\bibitem{BV1} A. Bloemendal and  B. Vir\'{a}g, {\em Limits of spiked random matrices I}, available at arXiv:1011.1877.

\bibitem{BV2} A. Bloemendal and  B. Vir\'{a}g, {\em Limits of spiked random matrices II}, available at arXiv:1109.3704.

\bibitem{CD} M. Capitaine and C. Donati-Martin, {\em Strong asymptotic freeness of Wigner and Wishart matrices}, Indiana Univ. Math. J. 56, (2007), 
767-804. 

\bibitem{CDF1} M. Capitaine, C. Donati-Martin, and D. F\'eral, {\em The largest eigenvalue of finite rank deformation of large Wigner matrices: 
convergence and non universality of the fluctuations}, Ann. Probab., 37, (1), (2009), 1-47.

\bibitem{CDF} M. Capitaine, C. Donati-Martin, and D. F\'eral,  {\em Central limit theorems for eigenvalues of deformations of Wigner matrices},
Ann. I.H.P.-Prob.et Stat.,  48, No. 1 (2012), 107-133.

\bibitem{CDFF} M. Capitaine, C. Donati-Martin, D. F\'eral, and M. F\'evrier, 
{\em Free convolution with a semi-circular distribution and eigenvalues of spiked deformations of Wigner matrices},
to appear in Elec. J. Probab., available at arXiv:1006.3684.


\bibitem{D} E.B. Davies, {\em The functional calculus}, J. London Math. Soc., 52, (1995), 166-176.


\bibitem{EKYY} L. Erd\"os, A. Knowles, H.-T. Yau, and J. Yin, {\em Spectral statistics of Erd\"os-R\'enyi graphs II: Eigenvalue
spacing and the extreme eigenvalues},
available at arXiv:1103.3869.

\bibitem{EYY} L. Erd\"os, H.-T. Yau, and J. Yin, {\em Rigidity of eigenvalues of generalized Wigner matrices},
available at arXiv:1007.4652.

\bibitem{E} L. Erd\"os, {\em Universality of Wigner random matrices: a survey of recent results}, 
Uspekhi Mat. Nauk 66 (2011), no. 3(399),67-198.

\bibitem{FP} D. F\'eral and S. P\'ech\'e,  {\em The largest eigenvalue of rank one deformation of large Wigner matrices}  Comm. Math. Phys.  272,  
no. 1, (2007), 185-228.

\bibitem{FK} Z. F\"uredi Z. and J. Koml\'os,  {\em The eigenvalues of random symmetric matrices}, Combinatorica 1, (1981), 233-241.

\bibitem{GS}  Grinberg, V. S., and Sevastyanov, S. V. Value of the Steinitz constant. {\em Functional Analysis and Its Applications 14} (1980), 125-126. 


\bibitem{GZ} A. Guionnet and B. Zegarlinski, {\em Lectures on logarithmic Sobolev inequalities}, Seminaire de Probabilit\'{e}s XXXVI, 
Lecture Notes in Mathematics 1801, (2003), Springer, Paris.


\bibitem{HS} B. Helffer and J. Sj\"{o}strand,  {\em Equation de Schr\"{o}dinger avec champ magnetique et equation de Harper}, Schr\"{o}dinger
Operators, Lecture Notes in Physics 345, 118-197, (eds. H. Holden and A. Jensen)  (1989), Springer, Berlin.

\bibitem{J} K. Johansson,  {\em Universality for certain Hermitian Wigner matrices under weak moment conditions}, 
48, No.1, (2012), 47-79.

\bibitem {Jo} I.M. Johnstone, {\em On the distribution of the largest eigenvalue in principal components
analysis}, Ann. Stat. 29, (2001), 295-327.


\bibitem{KKP} A. Khorunzhy, B. Khoruzhenko  and L. Pastur,  {\em Asymptotic properties of large random matrices 
with independent entries}, J. Math. Phys. 37, (1996), 5033-5060.

\bibitem{KYlsc} A. Knowles and J. Yin, {\em The isotropic semicircle law and deformation of Wigner matrices}, 
available at arXiv:1110.6449.

\bibitem{KY} A. Knowles and J. Yin, {\em Eigenvector distribution of Wigner matrices}, 
available at arXiv:1102.0057.

\bibitem{KYout} A. Knowles and J. Yin, {\em The Outliers of a Deformed Wigner Matrix}, 
available at arXiv:1207.5619.

\bibitem{LP1} A. Lytova and L. Pastur, {\em Fluctuations of matrix elements of regular functions of
              Gaussian random matrices}, J. Stat. Phys., 134, (2009), 147-159.

\bibitem{LP2} A. Lytova and L. Pastur, {\em Non-Gaussian limiting laws for the entries of regular functions of
              the Wigner matrices}, available at arXiv:1103.2345.

\bibitem{L} A. Lytova,  {\em On Non-Gaussian Limiting Laws for the Certain
      Statistics of the Wigner Matrices}, available at arXiv.1201.3027.


\bibitem{Mai}  M. Maida, {\em Large deviations for the largest eigenvalue of rank one deformations of Gaussian ensembles}, Electron. J. Probab. 12, 
(2007), 1131-1150.

\bibitem{M} M.L. Mehta, {\em Random Matrices}, (1991) New York, Academic Press.

\bibitem{ORS} S. O'Rourke, D. Renfrew, and A. Soshnikov, {\em On fluctuations of matrix entries of regular functions of Wigner matrices
with non-identically distributed entries}, to appear in J. Theor. Probab., available at arXiv:1104.1663v3.

\bibitem{Paul} D. Paul,  {\em Asymptotics of sample eigenstructure for a large dimensional spiked covariance model},  Statist. Sinica 17, no.4, 
(2007), 
1617-1642.

\bibitem{P} S. P\'ech\'e, {\em The largest eigenvalue of small rank perturbations of Hermitian random matrices}.  Probab. Theory Related Fields  134,  
no. 1, (2006), 127-173.

\bibitem{PRS1} A. Pizzo, D. Renfrew, and A. Soshnikov,  {\em Fluctuations of matrix entries of regular functions of Wigner matrices}, 
J. Stat. Phys. 146,
No. 3, (2012), 550-591.

\bibitem{PRS} A. Pizzo, D. Renfrew, and A. Soshnikov,  {\em On finite rank deformations of Wigner matrices}, to appear in Annales de L'Institut 
Henri Poincar\'{e} Probabilit\'{e}s et Statistiques, available at arXiv:1103.3731 [math.PR] v.4.


\bibitem{Shc} M. Shcherbina, {\em Central limit theorem for linear eigenvalue statistics of Wigner and sample covariance random matrices}, 
Journal of Mathematical Physics, Analysis, Geometry, 7, No. 2, (2011), 176--192.

\bibitem{Sh} M. Shcherbina,  letter from March 1, 2011.


\bibitem{Shch} M. Shcherbina,  B. Tirozzi, {\em Central limit theorem for fluctuations of linear eigenvalue statistics of large random graphs. 
Diluted regime}, available at arXiv.1111.5492.

\bibitem{S} A. Soshnikov, {\em Universality at the edge of the spectrum in Wigner random matrices}, 
Commun. Math. Phys., 207, (1999), 697-733.


\bibitem{Steinitz} Steinitz, E. {\em Bedingt konvergente Reihen und konvexe Systeme}, Journal f\"{u}r die reine und 
angewandte Mathematik (Crelle's Journal) 143, (1913), 128-175. 


\bibitem{T} T. Tao, {\em Outliers in the spectrum of iid matrices with bounded rank perturbations},
to appear in Probab. Theory  Rel. Fields, available at arXiv:1012.4818v2.

\bibitem{TV} T. Tao and V. Vu, {\em Random matrices: Universal properties of eigenvectors}, available at arXiv:1103.2801.

\bibitem{TV1} T. Tao and V. Vu, {\em Random matrices: Universality of local eigenvalue statistics up to the edge},
Commun. Math. Phys., 298, (2010), 549-572.

\bibitem{TW1} C. Tracy and H. Widom, {\em Level spacing distributions and the Airy kernel}, Commun. Mathematical Physics 159, (1994) 151-174.

\bibitem{TW2} C. Tracy and H. Widom, {\em On orthogonal and symplectic matrix ensembles}, Commun. Mathematical Physics 177, (1996) 727-754.


\end{thebibliography}
\end{document}